\def\figs{.}
\renewcommand{\paragraph}[1]{\subsubsection{#1}}
\renewcommand{\cases}[1]{\left\{ \begin{array}{rl} #1 \end{array} \right.}
\newcommand{\smfrac}[2]{{\textstyle \frac{#1}{#2}}}
\newcommand{\mymat}[1]{\left[ \begin{matrix} #1 \end{matrix} \right]}
\def\Xint#1{\mathchoice
{\XXint\displaystyle\textstyle{#1}}%
{\XXint\textstyle\scriptstyle{#1}}%
{\XXint\scriptstyle\scriptscriptstyle{#1}}%
{\XXint\scriptscriptstyle\scriptscriptstyle{#1}}%
\!\int}
\def\XXint#1#2#3{{\setbox0=\hbox{$#1{#2#3}{\int}$ }
\vcenter{\hbox{$#2#3$ }}\kern-.6\wd0}}
\def\mint{\Xint-}
\def\b{\big}
\def\B{\Big}
\def\bg{\bigg}
\def\sep{\,|\,}
\def\bsep{\,\b|\,}
\def\Bsep{\,\B|\,}
\def\diam{{\rm diam}}
\def\argmin{{\rm argmin}}
\def\R{\mathbb{R}}
\def\N{\mathbb{N}}
\def\Z{\mathbb{Z}}
\def\WW{{\rm W}}
\def\CC{{\rm C}}
\def\LL{{\rm L}}
\def\dx{\,{\rm d}x}
\def\dt{\,{\rm d}t}
\def\ds{\,{\rm d}s}
\def\dd{{\rm d}}
\def\pp{\partial}
\def\db{\,{\rm db}}
\def\<{\langle}
\def\>{\rangle}
\def\per{\#}
\def\mA{{\sf A}}
\def\mB{{\sf B}}
\def\mF{{\sf F}}
\def\mG{{\sf G}}
\def\mI{{\sf I}}
\def\mJ{{\sf J}}
\def\mR{{\sf R}}
\def\eps{\varepsilon}
\def\tot{{\rm tot}}
\def\ac{{\rm ac}}
\def\a{{\rm a}}
\def\c{{\rm c}}
\def\i{{\rm i}}
\def\qce{{\rm qce}}
\newcommand{\Da}[1]{D_{\!#1}}
\newcommand{\Dc}[1]{\nabla_{\!\!#1}}
\def\D{\nabla}
\def\del{\delta}
\def\ddel{\delta^2}
\def\jl{[\![}
\def\jr{]\!]}
\def\L{\mathscr{L}}
\def\Lext{\L^\per}
\def\Rg{\mathscr{R}}
\def\Us{\mathscr{U}}
\def\Ys{\mathscr{Y}}
\def\YsA{\mathscr{Y}_\mA}
\def\yF{y_\mF}
\def\yG{y_\mG}
\def\yB{y_\mB}
\def\yA{y_\mA}
\def\E{\mathscr{E}}
\def\Ea{\E_\a}
\def\Pa{\mathscr{P}_\a}
\def\Eatot{\E_\a^\tot}
\def\Eactot{\E_\ac^\tot}
\def\Om{\Omega}
\def\Oma{\Om_\a}
\def\Omc{\Om_\c}
\def\T{\mathscr{T}}
\def\Th{\T_h}
\def\Thext{\T_h^\#}
\def\Ush{\mathscr{U}_h}
\def\Ysh{\mathscr{Y}_h}
\def\YshA{\Ys_{\mA,h}}
\def\PO{{\rm P}_0}
\def\POper{{\rm P}_0^\per}
\def\PI{{\rm P}_1}
\def\PIper{{\rm P}_1^\per}
\def\CR{{\rm N}_1}
\def\CRper{{\rm N}_1^\per}
\def\Edgh{\mathscr{F}_h}
\def\Edghext{\mathscr{F}_h^\per}
\def\Sh{\PI(\Thext)}
\def\Teps{\T_\eps}
\def\Tepsext{\T_\eps^\per}
\def\Tepsa{\T_\eps^\a}
\def\Tepsc{\T_\eps^\c}
\def\Tepsi{\T_\eps^\i}
\def\Edg{\mathscr{F}_\eps}
\def\Edgext{\mathscr{F}_\eps^\per}
\def\Laqce{\L_\a^\qce}
\def\Omcqce{\Om_\c^\qce}
\def\Eqce{\E_{\rm qce}}
\def\gc{{\rm gc}}
\def\Lagc{\L_\a^\gc}
\def\Ligc{\L_\i^\gc}
\def\Pac{\mathscr{P}_\ac}
\def\Eac{\E_\ac}
\def\Fi{E_\i}
\def\Ei{\E_\i}
\def\La{\L_\a}
\def\Laext{\L_\a^\per}
\def\Oma{\Om_\a}
\def\Omc{\Om_\c}
\def\Omi{\Om_\i}
\def\Tha{\T_h^\a}
\def\Thc{\T^\c_h}
\def\Thi{\T^\i_h}
\def\tily{\tilde y}
\def\tilz{\tilde z}
\def\modV{\widetilde{V}}
\def\w{\psi}
\def\modw{\hat{\psi}}
\def\Bi{\mathscr{B}_\i}
\def\Biext{\mathscr{B}_\i^\per}
\def\Emodel{\mathcal{E}^{\rm model}_h}
\def\Emodeleps{\mathcal{E}^{\rm model}_\eps}
\def\Ecoarse{\mathcal{E}^{\rm coarse}_h}
\def\Eext{\mathcal{E}^{\rm ext}_h}
\def\Eqnl{\E_{\rm qnl}}
\def\Laqnl{\mathscr{N}_\a}
\def\Lcqnl{\mathscr{N}_\c}
\def\Sa{\Sigma_\a}
\def\Sac{\Sigma_\ac}
\def\Sacm{\widehat{\Sigma}_\ac}
\def\Rac{{\sf R}}
\definecolor{cocol}{rgb}{0.7, 0, 0}
\begin{document}

\title[The role of the patch test in 2D atomistic-to-continuum
coupling
methods]{The role of the patch test in 2D \\
  atomistic-to-continuum coupling methods}

\author{C. Ortner}
\address{C. Ortner\\ Mathematical Institute\\
  24-29 St Giles' \\ Oxford OX1 3LB \\ UK}
\email{ortner@maths.ox.ac.uk}

\date{\today}

\thanks{This work was supported by the EPSRC Critical Mass Programme
  ``New Frontiers in the Mathematics of Solids'' (OxMoS) and by the
  EPSRC grant EP/H003096 ``Analysis of atomistic-to-continuum coupling
  methods''.}

\subjclass[2000]{65N12, 65N15, 70C20}

\keywords{atomistic models, atomistic-to-continuum coupling,
  quasicontinuum method, coarse graining, ghost forces, patch test,
  consistency}

\begin{abstract}
  For a general class of atomistic-to-continuum coupling methods,
  coupling multi-body interatomic potentials with a $\PI$-finite
  element discretisation of Cauchy--Born nonlinear elasticity, this
  paper adresses the question whether {\em patch test consistency}
  (or, absence of {\em ghost forces}) implies a first-order error
  estimate.
  
  In two dimensions it is shown that this is indeed true under the
  following additional technical assumptions: (i) an energy
  consistency condition, (ii) locality of the interface correction,
  (iii) volumetric scaling of the interface correction, and (iv)
  connectedness of the atomistic region. The extent to which these
  assumptions are necessary is discussed in detail.
\end{abstract}

\maketitle


\section{Introduction}
\label{sec:intro}
Defects in crystalline materials
interact through their elastic fields far beyond their atomic
neighbourhoods. An accurate computation of such defects requires the
use of atomistic models; however, the size of the atomistic systems
that are often required to accurately represent the elastic far-field
makes atomistic models infeasible or, at the very least, grossly
inefficient. Indeed, atomistic accuracy is often only required in a
small neighbourhood of the defect, while the elastic far field may be
approximated using an appropriate continuum elasticity model.

Atomistic-to-continuum coupling methods (a/c methods) aim to exploit
this fact by retaining atomistic models only in small neighbourhoods
of defects, and coupling these neighbourhoods to finite element
discretisations of continuum elasticity models; see Figure
\ref{fig:intro:meshes}(c,d). By employing a coarse discretisation of
the continuum model, such a process can achieve a considerable
reduction in computational complexity, however, some of the first a/c
methods suffered from the so-called ``ghost force problem'': While
homogeneous deformations are equilibria of both the pure atomistic and
the pure continuum model, they are not equilibria of certain a/c
models \cite{Ortiz:1995a, Shenoy:1999a, Miller:2003a} due to spurious
forces --- the ``ghost forces'' --- that can arise at the interface
between the atomistic and continuum regions.

Much of the literature on a/c methods has focused on constructing a/c
methods that did not exhibit, or reduced the effect of the ``ghost
forces'' \cite{Shenoy:1999a, cadd, Shimokawa:2004, E:2006, badi08,
  Fish:2007, Dobson:2008a, Makr:2010a, Shapeev:2010a, KlZi:2006,
  XiBe:2004, IyGa:2011}. A straightforward solution was the
introduction of {\em force-based} (i.e, non-conservative) methods
\cite{Kohlhoff:1989, Shenoy:1999a, cadd, Dobson:2008a, Fish:2007,
  Makr:2010a}. The construction of accurate {\em energy-based}
coupling mechanisms turned out to be more challenging. Several
creative approaches providing partial solutions to the problem were
suggested \cite{Shimokawa:2004, E:2006, Shapeev:2010a, IyGa:2011},
however, no general solution exists so far.

The inconsistency of early a/c methods is reminiscent of the
inconsistency problems encountered in the early history of finite
element methods. A simple criterion to test consistency of finite
element methods is the {\em patch test} introduced by Irons {\it et
  al} \cite{Irons:1966a}; see also
\cite{StrangFix2008,Belytschko:2000a}. The ``ghost force problem''
discussed above is precisely the failure of such a patch test. It is
well known that, in general, the patch test is neither necessary nor
sufficient for convergence of finite element methods; see, e.g.,
\cite{StrangFix2008,Belytschko:2000a} where several variants of patch
tests are discussed. The same is true for a/c methods: It was shown in
\cite{Luskin:clusterqc} that a particular flavour of force-based a/c
coupling typically has a consistency error of nearly $100\%$, even
though it does pass the patch test.

Although a growing numerical analysis literature exists on the subject
of a/c methods \cite{LinP:2003a, OrtnerSuli:2008a, Dobson:2008a,
  Dobson:2008b, Ortner:qnl.1d, Dobson:qce.stab, emingyang, E:2005a,
  MiLu:2011}, it has so far focused primarily on one-dimensional model
problems. (A notable exception is the work of Lu and Ming on
force-based hybrid methods \cite{MiLu:2011}. However, the techniques
used therein require large overlaps and cannot accommodate sharp
interfaces.) Only specific methods are analyzed; the question whether
absence of ``ghost forces'' (or, {\em patch test consistency}, as we
shall call it) in general implies satisfactory accuracy has neither
been posed nor addressed so far. The purpose of the present paper is
to fill precisely this gap. After introducing a general atomistic
model and a general class of abstract a/c methods, and establishing
the necessary analytical framework, it will be shown in Theorem
\ref{th:2d:main_result}, which is the main result of the paper, that
in two dimensions patch test consistency together with additional
technical assumptions implies first-order consistency of energy-based
a/c methods.

\subsection{Outline}
\label{sec:intro:outline}
\S\ref{sec:pre} gives a detailed introduction to the construction of
a/c methods. This section also develops a new notation that is well
suited for the analysis of 2D and 3D a/c methods. In
\S\ref{sec:intro:patch_test} we give a precise statement of the patch
test consistency condition.

\S\ref{sec:fram} contains a general framework for the a priori error
analysis of a/c methods in $\WW^{1,p}$-norms, similar to an error
analysis of Galerkin methods with variational crimes. Several new
technical results are presented in this section, such as the
introduction of an oscillation operator to measure local smoothness of
discrete functions (\S\ref{sec:fram:osc}), an interpolation error
estimate for piecewise affine functions (\S\ref{sec:fram:interp}), and
making precise the assumption made in much of the a/c numerical
analysis literature that it is sufficient to estimate the modelling
error without coarsening (\S\ref{sec:fram:cons}). The main ingredient
left open in this analysis is a stability assumption, which requires a
significant amount of additional work and is beyond the scope of this
paper.

\S\ref{sec:e1d} presents two 1D examples for modelling error
estimates, which motivate the importance of patch test consistency,
and discusses the modelling error estimates that can at best be
expected if a method is patch test consistent.

\S\ref{sec:aux} introduces the two main auxiliary results used in the
2D analysis of \S\ref{sec:2d}: (i) Shapeev's bond density lemma, which
allows the translation between bond integrals and volume integrals;
and (ii) a representation theorem for discrete divergence-free
$\PO$-tensor fields.

Finally, \S\ref{sec:2d} establishes the main result of this paper,
Theorem \ref{th:2d:main_result}: if an a/c method is patch test
consistent and satisfies other natural technical assumptions, then it
is also first-order consistent. The proof depends on a novel
construction of stress tensors for atomistic models, related to the
virial stress \cite{AdTa:2010} (generalizing the 1D construction in
\cite{MakrOrtSul:qcf.nonlin, Makr:2010a}), and a corresponding
construction for the stress tensor associated with the a/c
energy. Moreover, we discuss in detail to what extent the technical
conditions of Theorem \ref{th:2d:main_result} are required. For
example, we show that, if the atomistic region is finite (i.e.,
completely surrounded by the continuum region) then the condition of
global energy consistency already follows from patch test consistency.

\subsection{Sketch of the main result}
\label{sec:intro:main_result}
In this section we discuss the main result in non-rigorous terms. Let
$\Ea$ be an atomistic energy functional and let $\Eac$ be an a/c
energy functional, which uses the atomistic description in part of the
computational domain, and couples it to a finite element
discretisation of Cauchy--Born nonlinear elasticity
(cf. \S\ref{sec:intro:qce} for its definition), with suitable
interface treatment. Suppose that the following conditions are
satisfied:
\begin{itemize}
\item[(i)] {\it $\Eac$ is patch test consistent:} Every homogeneous
  deformation is a critical point of $\Eac$
  (cf. \S~\ref{th:intro:a_patchtest}).
\item[(ii)] {\it $\Eac$ is globally energy consistent:} $\Eac$ is exact for
  homogeneous deformations (cf. \S~\ref{sec:intro:qce}).
\item[(iii)] {\it Locality and scaling of the interface correction:}
  The interface correction has (roughly) the same interaction range as
  the atomistic model, and has volumetric scaling (cf.
  \S\ref{sec:intro:loc_scal}).
\item[(iv)] The atomistic region $\Oma$ is connected.
\item[(v)] {\it Stability: } For some $p \in [1, \infty]$, and for
  deformations $y$ in a neighbourhood of the atomistic solution, the
  second variation $\ddel\Eac(y)$ is stable, when understood as a
  linear operator from (discrete variants of) $\WW^{1,p}$ to
  $\WW^{-1,p}$ (cf. \S\ref{sec:fram:stab}).
\item[(vi)] The finite element mesh in the continuum region is shape
  regular. \cite{Ciarlet:1978}
\end{itemize}

Under conditions (v) and (vi), we show in \S\ref{sec:fram} that the
error between the atomistic solution $y_\a$ and the a/c solution
$y_\ac$ can be bounded by
\begin{equation}
  \label{eq:idea:errest_1}
  \| \D y_\a - \D y_\ac \|_{\LL^p(\Om)} \lesssim \Emodeleps + \Eext +
  \b\| {\rm h} \D^2 y_\a \b\|_{\LL^p(\Omc)}^2,
\end{equation}
where $\Om$ is the computational domain, $\Omc$ the continuum region,
${\rm h}$ a local mesh size function, $\Eext$ is the consistency
error for the treatment of external forces, and $\Emodeleps$ is the
modelling error, which we describe next. Since $y_\a$ is not a
continuous deformation, but a deformation of a discrete lattice, the
various terms appearing above need to be interpreted with care. This
is the focus of \S\ref{sec:fram}. For example, in the rigorous version
of \eqref{eq:idea:errest_1}, we will replace $\D^2 y_\a$ by an
oscillation of a suitably defined gradient.

The step outlined above does not distinguish different variants of a/c
methods. The error introduced by the coupling mechanism and the
continuum model is contained in the modelling error
\begin{displaymath}
  \Emodeleps = \b\| \del\Eac(y_\a) - \del\Ea(y_\a)
  \b\|_{\WW^{-1,p}_\eps},
\end{displaymath}
where $\WW^{-1,p}_\eps$ is a suitably defined negative Sobolev norm on
an atomistic grid. If $\Eac$ is not patch test consistent, then,
typically, $\Emodeleps = O(1)$. However, if conditions (i)--(iv) hold,
and if the problem is set in either one or two space dimensions, then
we will prove in \S\ref{sec:2d} that
\begin{equation}
  \label{eq:idea:errest_2}
  \Emodeleps \lesssim \eps \, \b\| \D^2 y_\a \b\|_{\LL^p(\Omc \cup \Omi)},
\end{equation}
where $\eps$ is the atomistic spacing and $\Omi$ an interface
region. A rigorous statement of \eqref{eq:idea:errest_2} is given in
Theorem \ref{th:2d:main_result}, which is the main result of the
paper. 

\subsection{Basic notational conventions} 
\label{sec:intro:basic_notation}
Vectors are denoted by lower case roman symbols, $x, y$, $a, b$, and
so forth. Matrices are denoted by capital sans serif symbols $\mA, \mB,
\mF, \mG$, and so forth. We will not distinguish between row and
column vectors. If two vectors are multiplied, then it will be
specified whether the operation is the dot product or the tensor
product: for $a, b \in \R^k$, we define
\begin{displaymath}
  a \cdot b := \sum_{j = 1}^k a_j b_j, \quad \text{and} \quad
  a \otimes b := \b( a_i b_j \b)_{\substack{i = 1,\dots, k \\ j = 1,
      \dots, k}},
\end{displaymath}
where, in $a \otimes b$, $i$ is the row index and $j$ the column
index.

When matrix fields take the role of stress tensors, we will also call
them tensor fields and use greek letters $\sigma, \Sigma$, and so
forth.

The Euclidean norm of a vector and the Frobenius norm of a matrix are
denoted by $|\cdot|$. The $\ell^p$-norms, $p \in [1, \infty]$, of a
vector or matrix are denoted by $|\cdot|_p$, and sometimes by $\|\cdot
\|_{\ell^p}$. For $\eps > 0$, the weighted $\ell^p$-norms, on an index
set $\mathscr{S}$, are defined as
\begin{displaymath}
  \| a \|_{\ell^p_\eps(\mathscr{S})} := \eps^{1/p} \| a\|_{\ell^p(\mathscr{S})}.
\end{displaymath}

The topological dual of a vector space $\Us$ is denoted by $\Us^*$,
with duality pairing $\< \cdot, \cdot\>$.

If $A \subset \R^d$ is a measurable set then $|A|$ denotes its
$d$-dimensional volume. The symbol $\mint_A$ denotes $|A|^{-1}
\int_A$, provided that $|A| > 0$.  If $A$ has a well-defined area or
length, then these are denoted, respectively, by ${\rm area}(A)$ and
${\rm length}(A)$.

For a measurable function $f : A \to \R$, $\|f\|_{\LL^p(A)}$ denotes
the standard $\LL^p$-norm. If $f : A \to \R^{k \times m}$, then
$\|f\|_{\LL^p(A)} := \| \,|f|_p \|_{\LL^p(A)}$.

Partial derivatives with respect to a variable $x_j$, say, are denoted
by $\pp / \pp {x_j}$. The Jacobi matrix of a differentiable function
$f : A \to \R^k$ is denoted by $\pp f$.  The symbol $\pp$ will also be
redefined in some contexts, but used with essentially the same meaning
as here.

When $f$ is a deformation or a displacement, then we will also write
$\D f = \pp f$. For $r \in \R^d$, the uni-directional derivative is
denoted by
\begin{displaymath}
  \Dc{r} f(x) := \lim_{t \searrow 0} \frac{f(x+tr) - f(x)}{t},
\end{displaymath}
whenever this limit exists. The symbol $\Da{r}$ denotes a finite
difference operator, which will be defined in
\S\ref{sec:intro:defn_Ea}.

Additional notation will be defined throughout. A list of symbols,
with references to their definitions is given in \S\ref{sec:notation}.

\section{Introduction to Atomistic/Continuum Model Coupling}
\label{sec:pre}
In this section we introduce a general multi-body interaction model
with periodic boundary conditions. This choice of boundary condition
is crucial since the non-locality of the atomistic interactions makes
an analysis of Dirichlet or Neumann boundaries challenging.  The
periodic boundary conditions can also be understood as ``artificial
boundary conditions'' for infinite crystals.

Next, we describe the construction of energy-based a/c methods that
couple the atomistic interaction potential with a $\PI$-finite element
discretization of the Cauchy--Born continuum model.  We motivate the
patch test (``ghost forces''), and why an interface correction is
required to obtain accurate coupling schemes.

\subsection{An atomistic model with periodic boundary condition}
\label{sec:intro:at_model}

\paragraph{Periodic deformations}
\label{sec:intro:at_model:perdef}
Let $d \in \{1,2,3\}$ denote the space dimension. We will, in
subsequent sections, restrict our analysis to $d \in \{1,2\}$,
however, the introduction to a/c coupling methods is independent of
the dimension.

For some $N \in \N$, and $\eps := 1/N$, we define the periodic
reference cell
\begin{displaymath}
  \L := \eps \b\{ -N+1, \dots, N \b\}^{d}.
\end{displaymath}
The space of $2\Z^d$-periodic displacements of $\Lext := \eps \Z^d$ is
given by
\begin{displaymath}
  \Us := \b\{ u :\Lext \to \R^d \bsep u(x+\xi) = u(x)
  \text{ for all } \xi \in 2 \Z^d, x \in \L \b\}.
\end{displaymath}
A homogeneous deformation (or, {\em Bravais lattice}) of $\Lext$ is a
map $\yA : \Lext \to \R^d$, where $\mA \in \R^{d \times d}$ and
$\yA(x) = \mA x$ for all $x \in \Lext$. We denote the space of {\em
  periodic deformations} of $\Lext$ by
\begin{displaymath}
  \Ys := \b\{ y : \Lext \to \R^d \bsep y - \yA \in \Us
  \text{ for some } \mA \in \R^{d \times d} \b\},
\end{displaymath}
and the space of deformations with prescribed {\em macroscopic strain}
$\mA$ by
\begin{displaymath}
  \Ys_\mA := \{ y \in \Ys \sep y - \yA \in \Us \}.
\end{displaymath}
$\Ys$ is a linear space, while $\Ys_\mA$ is an affine subspace of
$\Ys$.

For future reference we define notation that extends sets
periodically: For any set $A \subset \R^d$ we define $A^\per :=
\bigcup_{\xi \in 2 \Z^d} (\xi + A)$. This notation is consistent with
the definition of $\Lext$.  If $\mathscr{A}$ is a family of sets, then
$\mathscr{A}^\per := \{ A^\per \sep A \in \mathscr{A}\}$.

\paragraph{The atomistic energy}
\label{sec:intro:defn_Ea}
For a map $v : \Lext \to \R^k$, $k \in \N$, and $r \in \Z^d \setminus
\{0\}$, we define the finite difference operator
\begin{displaymath}
  \Da{r} v(x) := \frac{v(x+\eps r) - v(x)}{\eps}.
\end{displaymath}

We assume that the {\em stored elastic energy} of a deformation $y \in
\Ys$ is given in the form
\begin{equation}
  \label{eq:intro:Ea}
  \Ea(y) := \eps^d \sum_{x \in \L} V\b(\Da{\Rg} y(x)\b),
\end{equation}
where $\Rg \subset \Z^d \setminus \{0\}$ is a finite {\em interaction
  range}, $\Da{\Rg}y(x) := (\Da{r} y(x))_{r \in \Rg}$, and where $V
\in \CC^2((\R^d)^\Rg)$ is a multi-body interaction potential. Under
these conditions, $\Ea \in \CC^2(\Ys)$.

The scaling of the lattice, of the finite difference operator, and of
the energy were chosen to highlight the natural connection between
molecular mechanics and continuum mechanics. For example, $\eps^d
\sum$ resembles an integral (or, a Riemann sum), while $\Da{r}$
resembles a directional derivative. It should be stressed, however,
that $\eps$ is a fixed parameter of the problem, which is small but
does {\em not} tend to zero.

The formulation \eqref{eq:intro:Ea} includes all commonly employed
{\em classical} interatomic potentials (see, e.g., \cite{Finnis,
  Pettifor}): pair potentials such as the Lennard-Jones or Morse
potential, bond-angle potentials, embedded atom potentials, bond-order
potentials, or any combination of the former, provided that they have
a finite interaction range.  The generality of the interaction
potential also includes effective potentials obtained for in-plane or
anti-plane deformations of 3D crystals.

No major difficulties should be expected in generalizing the analysis
to infinite interaction ranges, provided the interaction strength
decays sufficiently fast.
A generalization of the analysis to genuine long-range interactions
such as Coulomb interactions is not obvious.

\paragraph{Assumptions on the interaction potential}
\label{sec:intro:assV}
For ${\bf g} = (g_{r})_{r \in \Rg} \in (\R^d)^\Rg$ we denote the first
and second partial derivatives of $V$ at ${\bf g}$, respectively, by
\begin{displaymath}
  \pp_r V({\bf g}) := \frac{\pp V({\bf g})}{\pp g_r} \in \R^d,
  \quad \text{and} \quad
  \pp_{r,s} V({\bf g}) := \frac{\pp^2 V({\bf g})}{\pp g_r \pp g_s} \in
  \R^{d \times d}, \qquad \text{for } r, s, \in \Rg.
\end{displaymath}

Throughout this work we assume the following {\em global} bound: there
exist constants $M_{r,s}^\a \geq 0$, $r, s \in \Rg$, such that
\begin{equation}
  \label{eq:intro:bound_DrsV}
  \sup_{{\bf g} \in (\R^d)^\Rg}  \b\| \pp_{r,s} V({\bf g}) \b\| \leq
  M_{r,s}^\a,
  \qquad \text{for all } r, s \in \Rg,
\end{equation}
where $\|\cdot\|$ denotes the $\ell^2$-operator norm. This assumption
contradicts realistic interaction models and is made to simplify the
notation; see \S\ref{sec:intro:invert} for further discussion of this
issue.

In the subsequent analysis we will, in fact, never make direct use of
the second partial derivatives $\pp_{r,s} V$ directly, but only use
the resulting Lipschitz property,
\begin{equation}
  \label{eq:intro:LipV}
  \b| \pp_r V({\bf g}) - \pp_r V({\bf h}) \b| \leq \sum_{s \in \Rg}
  M_{r,s}^\a \,|g_s - h_s| \qquad \forall {\bf g}, {\bf h} \in
  (\R^d)^\Rg, \quad r \in \Rg.
\end{equation}
The proof is straightforward.  For future reference, we also define
the constant
\begin{equation}
  \label{eq:intro:defn_Ma}
  M^\a := \sum_{r, s \in \Rg} |r| |s| M_{r,s}^\a.
\end{equation}

\paragraph{The variational problem}
\label{sec:intro:minA}
Let $\Pa \in \CC^2(\Ys; \R)$ be the {\em potential of external forces}
modelling, for example, a substrate or an indenter. As explained in
\S\ref{sec:intro:defects}, one may also use such a potential to
model simple point defects such as vacancies or impurities. 

Given a potential of external forces $\Pa$ and a macroscopic strain
$\mA$, we consider the problem of finding local minimizers of the
total energy $\Eatot := \Ea + \Pa$ in $\Ys_\mA$, in short,
\begin{equation}
  \label{eq:intro:minEa}
  y_\a \in \underset{y \in \Ys_\mA}{\argmin} \, \Eatot(y),
\end{equation}
where $\argmin$ denotes the set of {\em local} minimizers.

If $y_\a$ solves \eqref{eq:intro:minEa}, then $y_\a$ is a {\em
  critical point} of $\Eatot$, that is,
\begin{equation}
  \label{eq:intro:Ea_crit}
  \b\< \del\Ea(y_\a) + \del\Pa(y_\a), u \b\> = 0 \qquad \forall u
  \in \Us,
\end{equation}
where, for a functional $\mathscr{E} \in \CC^1(\Ys)$, we define the
{\em first variation} of $\mathscr{E}$ at $y$, as
\begin{displaymath}
  \<\del\mathscr{E}(y), z \> :=
  \frac{\dd}{\dd t} \mathscr{E}(y+t z)\b|_{t = 0}
  \qquad \text{for } y, z \in \Ys.
\end{displaymath}
If $\E \in \CC^2(\Ys)$ then the second variation is defined
analogously as
\begin{displaymath}
  \<\ddel\mathscr{E}(y) z_1, z_2 \> :=
  \frac{\dd}{\dd t} \b\< \del\E(y+t z_1), z_2 \b\> \b|_{t = 0}
  \qquad \text{for } y, z_1, z_2 \in \Ys.
\end{displaymath}
The same notation will be used for functionals defined on different
spaces.

\paragraph{The patch test for the atomistic model}
The following proposition can be understood as the {\em patch test}
for the atomistic model: {\it In the absence of external forces and
  defects a homogeneous lattice is always a critical point of $\Ea$.}

\begin{proposition}
  \label{th:intro:a_patchtest}
  $\<\del\Ea(\yA), u\> = 0$ for all $u \in \Us$ and for all $\mA \in
  \R^{d \times d}$.
\end{proposition}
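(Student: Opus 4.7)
The plan is to compute $\langle\delta\Ea(\yA),u\rangle$ directly from the definition \eqref{eq:intro:Ea}, exploit the fact that the argument of $V$ is spatially constant at a homogeneous deformation, and then use periodicity to kill the remaining linear terms.

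First, I would differentiate \eqref{eq:intro:Ea} in direction $u \in \Us$ using the chain rule. Since $\Da{r}$ is linear, this yields
\begin{equation*}
  \<\del\Ea(y), u\> = \eps^d \sum_{x \in \L} \sum_{r \in \Rg} \pp_r V\bigl(\Da{\Rg} y(x)\bigr) \cdot \Da{r} u(x).
\end{equation*}
Next, I would specialize to $y = \yA$. Because $\yA(x) = \mA x$, we have $\Da{r}\yA(x) = \mA r$ for every $x \in \Lext$, so $\Da{\Rg}\yA(x) = (\mA r)_{r\in\Rg}$ is independent of $x$. Therefore the vector $\sigma_r := \pp_r V\bigl((\mA r)_{r\in\Rg}\bigr) \in \R^d$ is a constant depending only on $\mA$ and $r$, and
\begin{equation*}
  \<\del\Ea(\yA), u\> = \sum_{r \in \Rg} \sigma_r \cdot \Bigl( \eps^d \sum_{x \in \L} \Da{r} u(x) \Bigr).
\end{equation*}

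The only remaining step is to verify that $\sum_{x \in \L}\Da{r}u(x) = 0$ for each fixed $r \in \Rg$. Writing $\Da{r}u(x) = \eps^{-1}\bigl(u(x+\eps r) - u(x)\bigr)$, this reduces to showing that $\sum_{x\in\L} u(x+\eps r) = \sum_{x\in\L} u(x)$. Since $u \in \Us$ is $2\Z^d$-periodic and $\L$ is a full period cell of $\Lext = \eps\Z^d$, the translated set $\L + \eps r$ is again a complete set of representatives of $\Lext$ modulo $2\Z^d$, so the two sums agree. This is the one substantive point, though it is entirely routine once periodicity is unpacked. Combining these observations gives $\<\del\Ea(\yA), u\> = 0$.

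No genuine obstacle arises: the argument is essentially ``constant coefficients plus discrete integration by parts on a torus.'' The only thing to be careful about is the bookkeeping that $\L$ is indeed a fundamental domain for the $2\Z^d$-action on $\Lext$, so that translating by $\eps r \in \eps\Z^d$ permutes $\L$ modulo periodicity; this is immediate from the definition $\L = \eps\{-N+1,\dots,N\}^d$ together with $u(x+\xi) = u(x)$ for $\xi \in 2\Z^d$.
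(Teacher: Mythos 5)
Your proposal is correct and follows exactly the paper's own argument: compute $\<\del\Ea(y),u\>$ via the chain rule, observe that $\Da{\Rg}\yA(x) = \mA\Rg$ is independent of $x$ so the coefficients $\pp_r V(\mA\Rg)$ factor out, and conclude by the periodicity of $u$ that $\eps^d\sum_{x\in\L}\Da{r}u(x)=0$. The only difference is that you spell out the telescoping/fundamental-domain argument for that last sum, which the paper leaves implicit with the remark ``where we have used the fact that $u$ is periodic.''
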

\begin{proof}
  Let $y, z \in \Ys$, then
  \begin{equation}
    \label{eq:intro:delEa}
    \< \del\Ea(y), z\> = \eps^d \sum_{x \in \L} \sum_{r \in \Rg} 
    \pp_r V\b(\Da{\Rg}y(x)\b) \cdot \Da{r} z(x).
  \end{equation}

  Fix $\mA \in \R^{d \times d}$ and $u \in \Us$, then
  \begin{align*}
    \< \del \Ea(\yA), u \> =~& \sum_{r \in \Rg} \eps^d \sum_{x \in \L}
    \pp_r V\b( \mA \Rg \b) \cdot \Da{r} u(x) \\
    =~&  \sum_{r \in \Rg} \pp_r V\b( \mA \Rg \b) \cdot \bg\{
    \eps^d \sum_{x \in \L} \Da{r} u(x) \bg\} = 0,
  \end{align*}
  where we have used the fact that $u$ is periodic. Above, and
  throughout, we use the notation $\mA \Rg = (\mA r)_{r \in \Rg} =
  \Da{\Rg} \yA(x)$.
\end{proof}

\subsection{Remarks on the atomistic model} \quad
\label{sec:intro:remarks}
\paragraph{Invertibility of deformations}
\label{sec:intro:invert}
In \S\ref{sec:intro:defn_Ea} and \S\ref{sec:intro:assV} we have
assumed that $\Ea$ is twice differentiable at {\em all} deformations $y
\in \Ys$, and that the second partial derivatives of the interaction
potential are {\em globally bounded}. 

However, realistic interaction potentials $V$ take the value $+\infty$
if two atoms occupy the same position in space, and hence can only be
differentiable at deformations that are one-to-one (i.e., ``true''
deformations).  With only minor additional technicalities such
potentials can be admitted in the analysis. The global bound
\eqref{eq:intro:bound_DrsV} would then be replaced by a local bound
and certain explicit bounds on $\Da{\Rg}y$; see, e.g.,
\cite{Ortner:qnl.1d, OrtShap:2011a}.

\paragraph{Reference cutoff}
\label{sec:intro:ref_cutoff}
Another aspect of the atomistic energy \eqref{eq:intro:Ea}, which
makes it inappropriate for realistic applications is that the
interaction potential $V$ has a cut-off radius in the reference
configuration. In atomistic models, atoms are unconstrained in their
position and hence two atoms that are far apart in the reference
configuration may be arbitrarily close, and hence interact, in the
deformed configuration.

The reference cutoff in \eqref{eq:intro:Ea} is assumed only for the
sake of brevity of the notation. One can, similarly as discussed in
\S\ref{sec:intro:invert}, take a more general form of the interaction
potential that does not suffer from this drawback and make suitable
assumptions on deformations under consideration that control the
interaction neighbourhood.

\paragraph{Modelling crystal defects}
\label{sec:intro:defects}
Some simple crystal defects can be modelled via the potential of the
external forces, $\Pa$. The simplest example is an impurity, where a single
atom is replaced with an atom from a different species. For
\eqref{eq:intro:Ea} this means that the interaction potential is
changed from $V(\Da{\Rg} y(x))$ to $V^{\rm mod}(x; \Da{\Rg} y(x))$ in a
neighbourhood of the impurity. Alternatively, one may keep the
original form of $\Ea$ and define
\begin{displaymath}
  \Pa(y) = \eps^d \sum_{x \in \L} \b[
  V^{\rm mod}(x; \Da{\Rg} y(x)) - V(\Da{\Rg} y(x)) \b].
\end{displaymath}

Similarly, a vacancy can be modelled by simply removing all
interactions with a given atom. This would yield a difficulty with the
``unused'' degrees of freedom for the position of the vacancy atom,
which could simply be removed from the system \cite{OrtShap:2011a}. An
interstitial (an additional atom) can also be modelled fairly easily,
but one would need to augment the variable $y$ by additional degrees
of freedom for the position of the interstitial atom.

Dislocations, which possibly represent the most important class of
crystal defects are, in general, more difficult to describe. In the
atomistic minimization problem \eqref{eq:intro:minEa} they simply
represent a special class of local minimizers, however, in the coupled
atomistic/continuum models we discuss below most classes of
dislocations less straightforward to embed; however, see
\cite{LiLuOrVK:2011a, Miller:2008} for simple examples.


\subsection{Construction of a/c coupling methods}
\label{sec:intro:ac}
The atomistic model problem \eqref{eq:intro:minEa} is a
finite-dimensional optimisation problem and is therefore, in
principle, solvable using standard optimisation algorithms. However,
typical applications where atomistic models are employed require of
the order $10^9$ to $10^{12}$ atoms or more \cite{Miller:2003a,
  Ortiz:1995a}. It is therefore desirable to construct computationally
efficient {\em coarse grained} models.

\paragraph{Galerkin projection}
\label{sec:intro:galerkin}
To motivate the idea of atomistic-to-continuum coupling we consider a
crystal with a localized defect. Figure \ref{fig:intro:meshes}(a)
shows a deformed 2D crystal with an impurity that repels its
neighbouring atoms, causing a large local deformation. We observe
that, except in a small neighbourhood of the defect, the atoms are
arranged as a ``smooth'' deformation of the reference lattice
$\Lext$. It is therefore possible to approximate the atomistic
configurations from a low-dimensional subspace constructed, for
example, using a $\PI$-finite element method.

\begin{figure}
  \begin{center}
    \includegraphics[width=5cm]{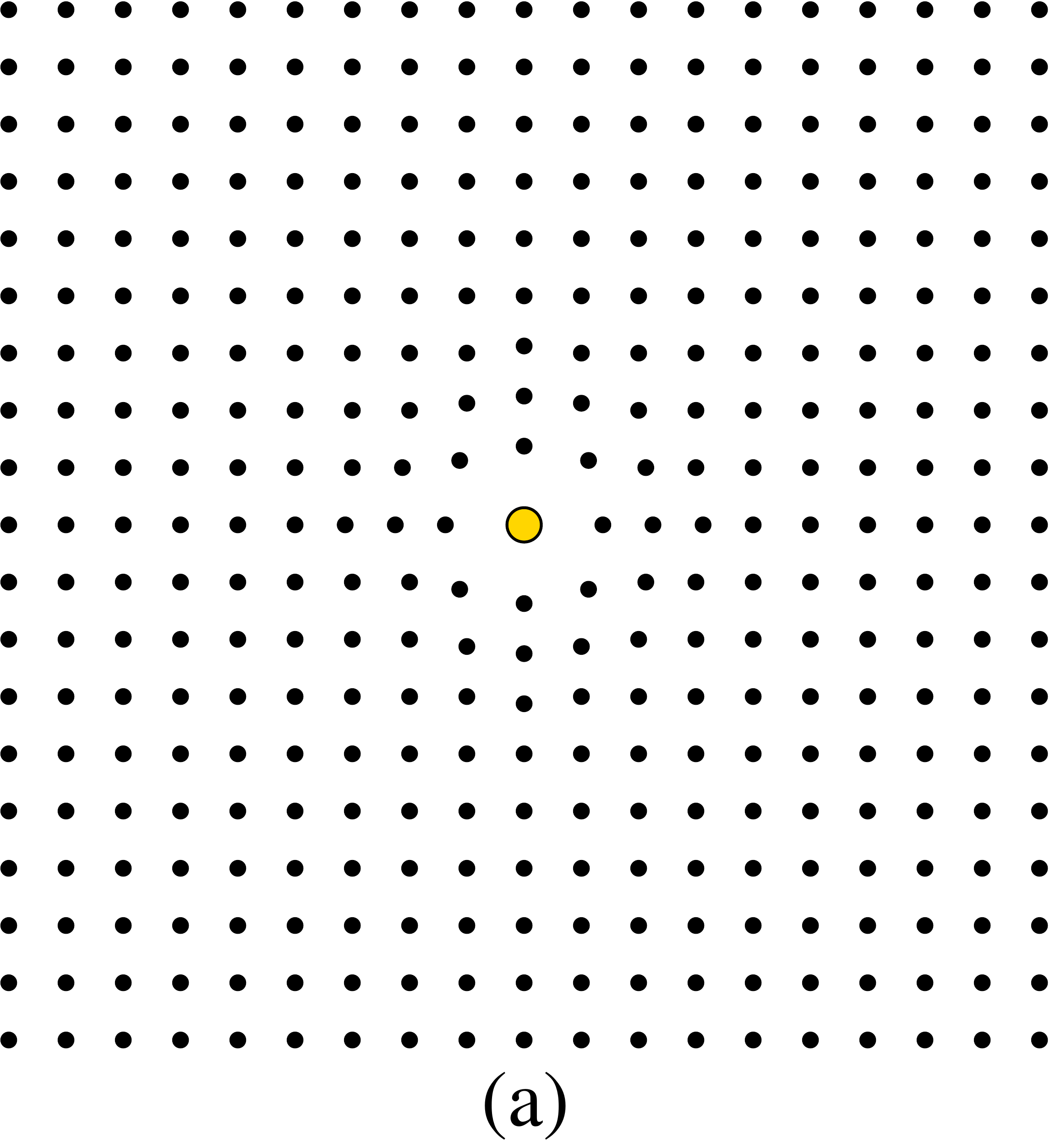} \quad
    \includegraphics[width=5cm]{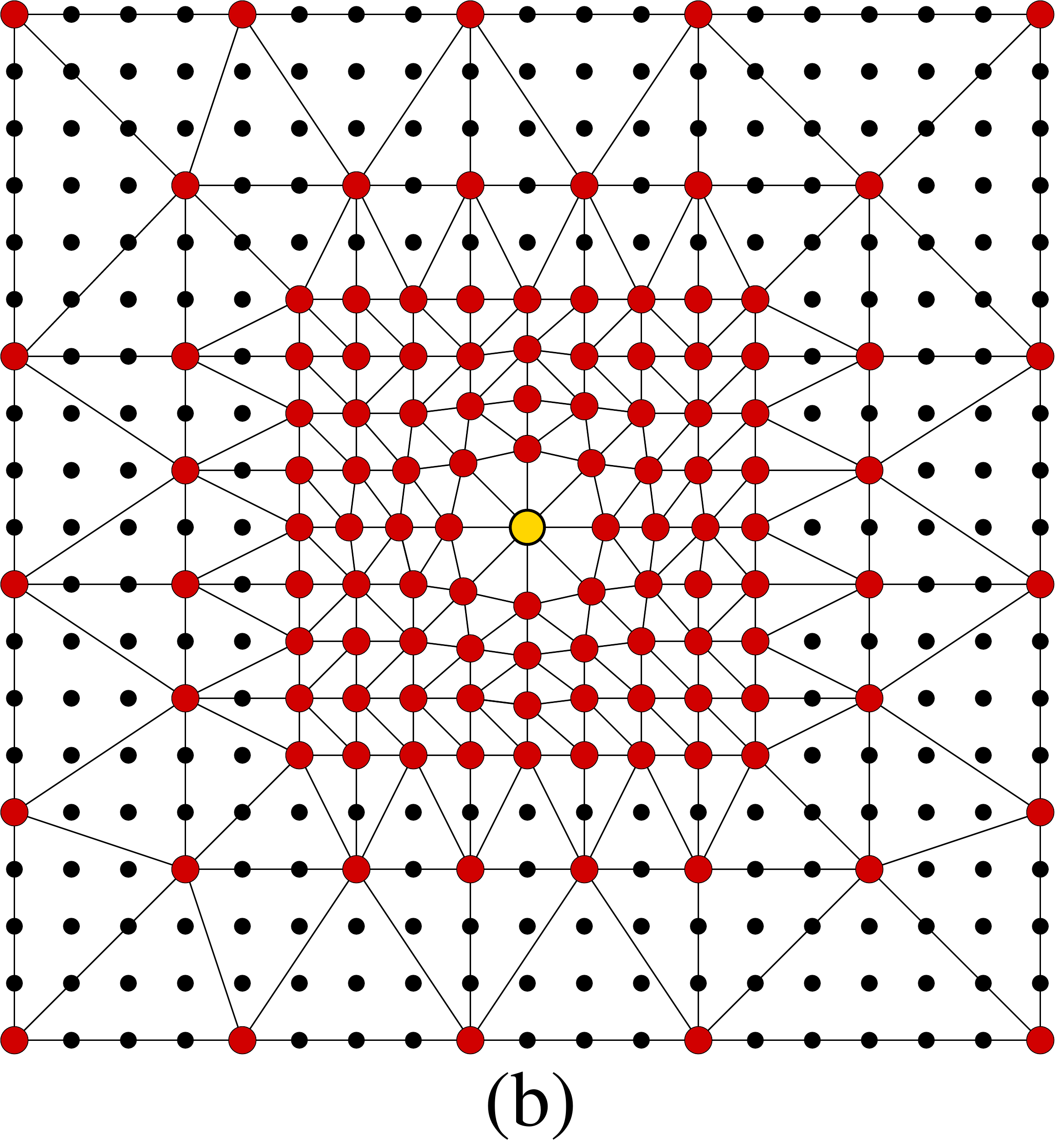} \\[2mm]
    \includegraphics[width=5cm]{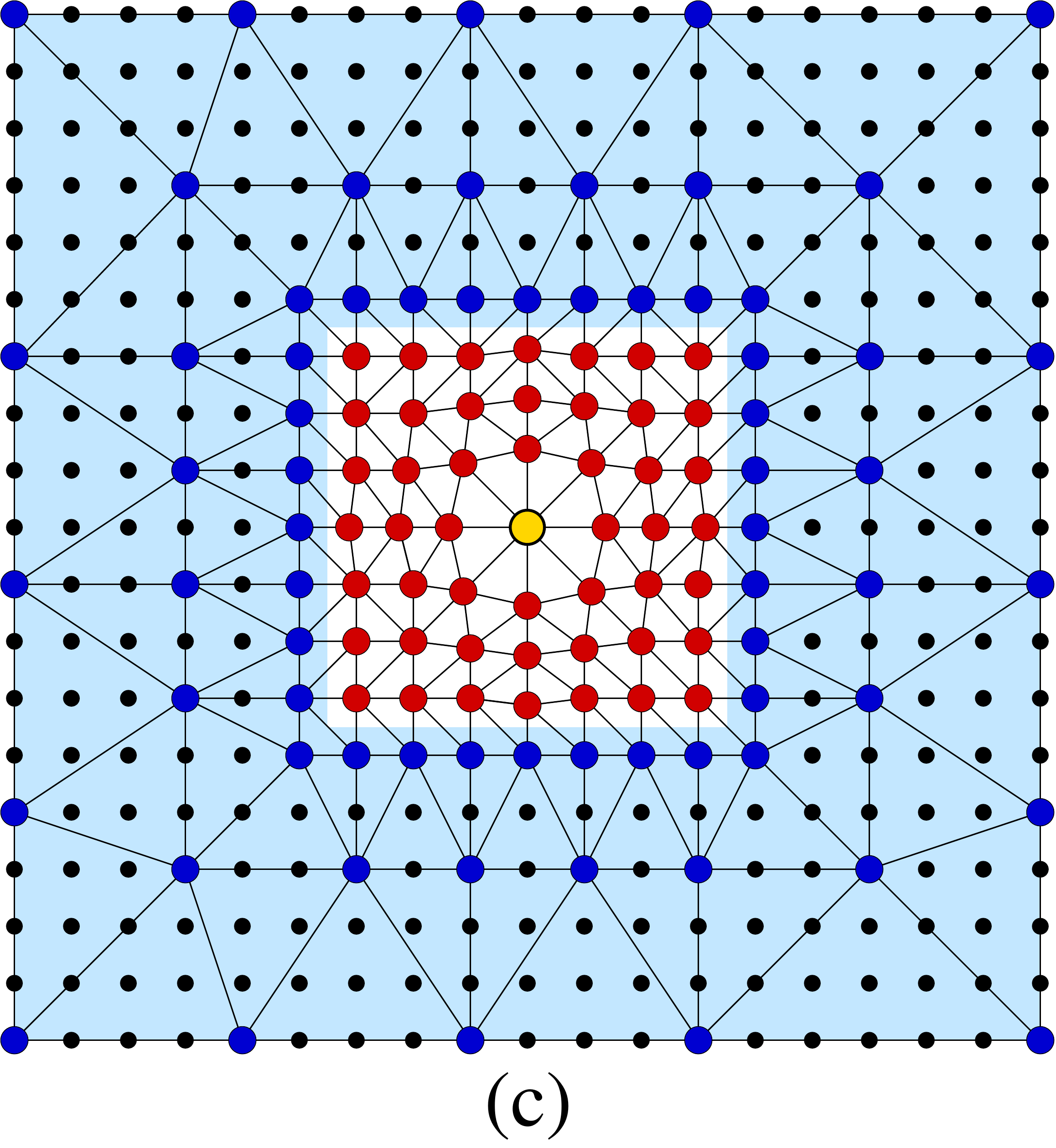} \quad
    \includegraphics[width=5cm]{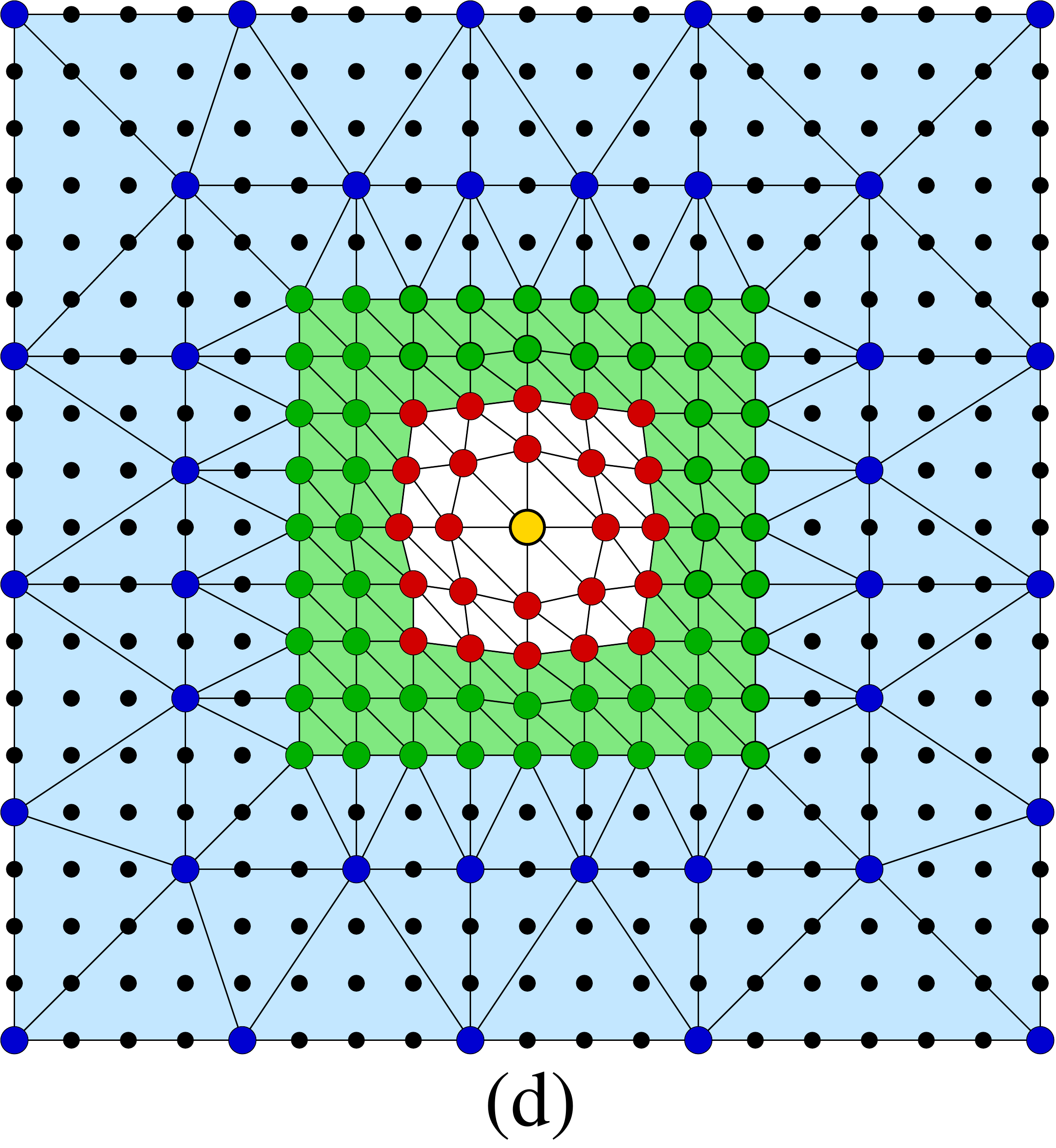}
    \caption{\label{fig:intro:meshes} 
      {\bf (a)} A 2D atomistic configuration
        with an impurity that causes a large local deformation from the
        reference lattice $\eps \Z^2$. \qquad
        {\bf (b)} Triangulation of the deformed atomistic configuration to
        visualize the Galerkin projection described in
        \S\ref{sec:intro:galerkin}. The positions of the large red
        atoms are free, while the positions of the small black atoms are
        constraint by the motion of the free atoms.  \qquad
        {\bf (c)} Visualization of the QCE method described in
        \S\ref{sec:intro:qce}. The blue shaded region is the set
        $\Omcqce$, the red atoms inside the white region are the set
        $\Laqce$ (both after deformation).   \qquad
        {\bf (d)} Visualization of an interface correction as
        described in \S\ref{sec:intro:int_corr}. The blue shaded
        regions is the continuum region $\Omc$, the green shaded
        region is the interface region $\Omi$, and the white region is
        the atomistic region $\Oma$. The mesh in $\Omi \cup \Oma$ is
        chosen so that it coincides with $\Teps$
        (cf. \S\ref{sec:fram:disc_cts}).  }
  \end{center}
\end{figure}

Let $\Om := (-1, 1]^d$, and let $\Th$ be a regular \cite{Ciarlet:1978}
triangulation of $\bar\Om$, with vertices belonging to $\Lext$, that
can be extended periodically to a regular triangulation $\Thext$ of
$\R^d$. We make the convention that elements $T \in \Th$ are closed
sets. For $T \in \Th$ we define $h_T := \diam(T)$, and for each $x \in
\R^d$ we define $h(x) := \max \{ h_T \sep T \in \Th \text{ s.t. } x \in
T \}$.

We define the $\PI$ finite element space
\begin{displaymath}
  \Sh := \b\{ v_h : \Lext \to \R  \bsep  v_h \text{ is piecewise affine w.r.t. }
  \Thext \b\},
\end{displaymath}
and we denote the spaces of piecewise affine displacements and
deformations, respectively, by
\begin{displaymath}
  \Ush := \Us \cap \Sh^d, \quad \Ysh := \Ys \cap \Sh^d,
  \quad \text{and} \quad
  \YshA := \Ys_\mA \cap \Sh^d.
\end{displaymath}

For future reference, let $I_h : \Ys \to \Ysh$ denote the nodal
interpolation operator. We note that $I_h : \YsA \to \YshA$ as well as
$I_h : \Us \to \Ush$. We also define $\PIper(\Th)$ to be the set of
all $2\Z^d$-periodic functions $v_h \in \PI(\Thext)$; i.e., $\Ush =
\PIper(\Th)^d$. Let $\Edghext$ denote the set of closed edges of the
extended triangulation $\Thext$, and let $\Edgh$ denote the set of all
edges $f \in \Edghext$ such that ${\rm area}(f \cap \Om) \neq
0$. Finally, we introduce the spaces of piecewise constant functions
$\PO(\Th)$, $\PO(\Thext)$, and $\POper(\Th)$, defined in a similar
manner.

The {\em Galerkin approximation} of \eqref{eq:intro:minEa} is the
coarse-grained minimization problem
\begin{equation}
  \label{eq:intro:min_Gal}
  y_{\a, h} \in \underset{y_h \in \YshA}{\argmin} \, \Eatot(y_h),
\end{equation}
where we recall that $\Eatot = \Ea + \Pa$.

For example, if we choose the triangulation $\Th$ as in Figure
\ref{fig:intro:meshes}(b), then we obtain full atomistic resolution in
the neighbourhood of the defect, while considerably reducing the
overall number of degrees of freedom by coarsening the mesh away from
the defect. In this way it is possible to obtain highly accurate
approximations to nontrivial atomistic configurations. Under suitable
technical assumptions it is not too difficult to employ the classical
techniques of finite element error analysis in this context. Such
analyses, including {\it a posteriori} grid generation, are given in
\cite{LinP:2003a, LinP:2006a, OrtnerSuli:2008a}.

\begin{remark}[Regularity of Atomistic Solutions]
  In order for the Galerkin projection method, or the subsequent a/c
  coupling methods, to be accurate we require ``regularity'' of
  atomistic solutions. Such a regularity theory does not exist at
  present, however, most numerical experiments performed on atomistic
  models for {\em simple lattices} indicate ``smoothness'' of
  atomistic deformations away from defects. The situation would be
  different for so-called {\em multi-lattices}, which require a
  homogenisation step and represent a far greater challenge.
\end{remark}

\paragraph{Continuum region \& Cauchy--Born approximation}
\label{sec:intro:qce}
The Galerkin projection \eqref{eq:intro:min_Gal} reduces the number of
degrees of freedom considerably, however, the complexity of computing
$\Ea|_{\Ysh}$ is not reduced in the same manner. Due to the
non-locality of the atomistic interaction $\Ea|_{\Ysh}$ cannot be
evaluated as easily as in the case of finite element methods for
continuum mechanics. Several attempts have been made to use quadrature
ideas to approximate $\Ea$ and render \eqref{eq:intro:min_Gal}
computationally efficient
\cite{Eidel:2008a,Gunzburger:2008a,Knap:2001a}, however, it was shown
in \cite{Luskin:clusterqc} that these approximations yield
unacceptable consistency errors.

An alternative approach, proposed in \cite{Ortiz:1995a}, is to keep
the full atomistic description for atomistically fine elements, while
employing the {\em Cauchy--Born approximation} for coarser elements as
well as an interface region. Following the terminology of
\cite{Dobson:2008b} we call the resulting method the {\em QCE method}
(the original energy-based quasicontinuum method).

To formulate this method we choose a set $\Laqce \subset \L$ of atoms
that we wish to treat atomistically (the red atoms in Figure
\ref{fig:intro:meshes}(c)). Let $Q_\eps(x) := x + \eps (-\smfrac12,
\smfrac12]^d$, and define
\begin{displaymath}
  \text{and} \quad
  \Omcqce := \Om \setminus \bigcup_{x \in \Laqce} Q_\eps(x).
\end{displaymath}
With this notation, the QCE energy functional is defined, for $y_h
\in \Ysh$, as
\begin{equation}
  \label{eq:intro:E_qce}
  \Eqce(y_h) := \eps^d \sum_{x \in \Laqce} V\b(\Da{\Rg} y_h(x)\b)
  + \int_{\Omcqce} W(\D y_h) \dx,
\end{equation}
where $W : \R^{d \times d} \to \R$ is the {\em Cauchy--Born stored
  energy function},
\begin{equation}
  \label{eq:intro:defnW}
  W(\mF) := V(\mF \Rg) = V\b( (\mF r)_{r \in \Rg} \b).
\end{equation}
Note that $W(\mF)$ is the energy of a single atom in the Bravais
lattice $\mF \Lext$.

One may readily check that the complexity of evaluating $\Eqce$ (or
its derivatives) is of the order $O(\#\Th)$, that is, of the same
order as the number of degrees of freedom. Moreover, it is easy to see
that $\Eqce(\yA) = \Ea(\yA)$ for all $\mA \in \R^{d \times d}$. For
future reference, we give a formal definition of this property.

\begin{definition}[Global Energy Consistency]
  We say that an energy functional $\E \in \CC(\Ys)$ is {\em globally
    energy consistent} if
  \begin{equation} 
    \label{eq:intro:econs}
    \E(\yA) = \Ea(\yA) \qquad \forall \mA \in \R^{d \times d}.
  \end{equation}
\end{definition}

More generally one can show that $\Eqce(y_h)$ is a good approximation
to $\Ea(y_h)$ if $\D y_h$ varies only moderately. Despite these facts,
it turns out, as we discuss in \S\ref{sec:intro:patch_test} and
\S\ref{sec:e1d:qce}, that minimizers of $\Eqce$ are poor
approximations to minimizers of $\Ea$.

\begin{remark}[The Cauchy--Born Model]
  \label{rem:intro:cb}
  The Cauchy--Born model is a standard continuum model, for large
  deformations of single crystals. In the absence of defects,
  solutions of a pure Cauchy--Born model (no atomistic region) can
  provide excellent approximations to solutions of the atomistic model
  \eqref{eq:intro:minEa}. For example, in \cite{E:2007a} it is shown
  that, for smooth and small dead load external forces, and under
  realistic stability assumptions on $\Ea$, there exist solutions
  $y_\a$ of \eqref{eq:intro:minEa} and $y_\c$ of the Cauchy--Born
  model, such that
  \begin{displaymath}
    \bg(\eps^d \sum_{x \in \L} \sum_{j = 1}^d \b| \Da{e_j}y_\a(x) -
    \Da{e_j} {y}_\c(x)\b|^2 \bg)^{1/2} \leq C \eps^2,
  \end{displaymath}
  where $C$ depends on higher partial derivatives of $V$ and on the
  regularity of ${y}_\c$. Hence, in the regime of ``smooth elastic''
  deformations, the Cauchy--Born model can be considered an excellent
  approximation to the atomistic model \eqref{eq:intro:Ea}.
\end{remark}

\paragraph{The patch test}
\label{sec:intro:patch_test}
The {\em patch test} is often employed in the theory of finite element
methods \cite{StrangFix2008,Belytschko:2000a,Irons:1966a} as a simple
test for consistency. The test also plays an important role in the
design of a/c methods.

\begin{definition}[Patch Test Consistency]
  We say that an energy functional $\E \in \CC^1(\Ysh; \R)$ is {\em
    patch test consistent} if it satisfies
  \begin{equation}
    \label{eq:intro:patchtest}
    \b\< \del\E(\yF), u_h \b\> = 0 \qquad \forall\, u_h \in \Ush,
    \quad \forall\, \mF \in \R^{d \times d}.
  \end{equation}
\end{definition}

The terminology ``patch test consistency'' is motivated by Proposition
\ref{th:intro:a_patchtest}, where we have shown that the exact energy
$\Ea$ does satisfy the patch test \eqref{eq:intro:patchtest}.

However, the QCE energy $\Eqce$ is {\em not} patch test consistent
\cite{Shenoy:1999a}. This result will be reviewed for a
one-dimensional model problem in \S\ref{sec:e1d:qce}, where it will
also be shown how failure of the patch test affects the consistency
error.

\begin{remark}
  \label{rem:intro:pt_frc}
  In most of the a/c coupling literature the patch test is stated as
  the condition that
  \begin{displaymath}
    \frac{\pp \E(y_h)}{\pp y_h(p)}\B|_{y_h = \yF} = 0
    \quad \text{ for all finite element nodes } p.
  \end{displaymath}
  It is straightforward to see that this condition is equivalent to
  the variational formulation given in \eqref{eq:intro:patchtest}.
\end{remark}

\paragraph{Interface correction}
\label{sec:intro:int_corr}
In the engineering literature (see, e.g., \cite{Shenoy:1999a,
  Miller:2003a}) the non-zero forces under homogeneous strains of
patch test inconsistent a/c energies are usually dubbed ``ghost
forces''. The discovery that $\Eqce$ is not patch test consistent has
resulted in a number of works constructing new a/c methods that
removed or reduced the ``ghost forces'' \cite{badi08, E:2006,
  Shapeev:2010a, Shimokawa:2004, XiBe:2004, KlZi:2006, XHLi:genqnl1d,
  BvK:blend1d, IyGa:2011}. In some cases, this is achieved through
sacrificing a variational (i.e., conservative, or, energy-based)
formulation \cite{Dobson:2008a, Fish:2007, Kohlhoff:1989, Makr:2010a,
  Parks:2008a, Shenoy:1999a, cadd}.

In the present paper we will focus only on energy-based a/c methods
that are patch test consistent, i.e., that remove the ``ghost forces''
altogether. None of the methods presently available in the literature
have resolved this problem in its full generality, however,
\cite{E:2006, Shapeev:2010a, Shimokawa:2004, IyGa:2011} present
several interesting approaches and partial solutions. In the
following, we present a generalization of the {\em geometrically
  consistent coupling method} \cite{E:2006}, which is the most general
approach but leaves some questions concerning its practical
construction open.

Let $\Thc \subset \Th$ and $\Omc = \cup \Thc$. We assume that all
atoms belonging to $\L \setminus {\rm int}(\Omc)$ are vertices of
$\Th$, and we define two sets $\Lagc, \Ligc$ such that
\begin{displaymath}
  \Lagc \cup \Ligc = \L \setminus {\rm int}(\Omc), \quad \text{and} \quad
  \Lagc \cap \Ligc = \emptyset.
\end{displaymath}
For each $x \in \Ligc$, we define a modified interaction potential
$\modV(x; \cdot) \in \CC^2( (\R^d)^\Rg )$, and we define the a/c
energy functional as
\begin{equation}
  \label{eq:intro:Eac_v1}
  \E_\gc(y_h) := \eps^d \sum_{x \in \Lagc} V\b(\Da{\Rg} y(x)\b)
  + \eps^d \sum_{x \in \Ligc} \modV\b(x; \Da{\Rg} y(x) \b)
  + \int_{\Omc} W(\D y_h) \dx.
\end{equation}
For the modified potential $\modV$ one takes a general ansatz with
several free parameters, which are then fitted to remove or minimize
the ghost force. For example, following the ideas of the quasinonlocal
coupling method \cite{Shimokawa:2004} and the geometrically consistent
coupling method \cite{E:2006} one may define
\begin{align*}
  \modV\b(x; {\bf g}\b) = V\b( (\tilde{g}_r)_{r \in \Rg} \b), \quad
  \text{where} \quad
  \tilde{g}_r = \sum_{s \in \Rg} C_{x, r,s} g_s.
\end{align*}
The constants $C_{x,r,s}$ can then be determined analytically as in
\cite{E:2006,OrtZha:2011a}, or, as proposed in \cite{OrtZha:2011b},
numerically in a preprocessing step. The 2D numerical experiments
performed in \cite{OrtZha:2011b} suggest that it is always possible to
determine parameters $C_{x,r,s}$ such that $\Eac$ becomes patch test
consistent, however, a proof of this fact is still missing.

The purpose of the present work is to investigate the question whether
patch test consistency is in fact a sufficient condition for
first-order consistency of an a/c coupling method. If this would turn
out to be false in general, then it would be necessary to develop new
approaches for constructing accurate a/c methods.

\paragraph{General assumptions on the interface correction. }
\label{sec:intro:gen_int_corr}
For the subsequent analysis we assume an even more general form of the
a/c functional than \eqref{eq:intro:Eac_v1}. We choose $\Thc, \Thi,
\Tha \subset \Th$, mutually disjoint, such that $\Th = \Thc \cup \Thi
\cup \Tha$, and we define the continuum, interface, and atomistic
regions
\begin{displaymath}
  \Omc := \cup \Thc, \quad \Omi := \cup \Thi, \quad \text{and}
  \quad \Oma := \cup \Tha.
\end{displaymath}
(Note that $\Omc, \Omi, \Oma$ are closed sets.) Next, we define the
set of all nodes $\La \subset \L$ that interact with the atomistic
region:
\begin{displaymath}
  \La := \B\{ x \in \L \Bsep  (x, x+\eps r) \cap \Oma^\per \neq
  \emptyset \text{ for some } r \in \Rg \B\},
\end{displaymath}
where the ordered pair $(x, x') \in \Lext \times \Lext$ is called a
{\em bond}; here, and throughout, the symbol $(x, x')$ is also
identified with the segment ${\rm conv}\{ x, x' \}$ (in particular, it
is closed). To avoid interaction between $\La$ and $\Omc$, we assume
throughout that 
\begin{equation}
  \label{eq:intro:noint_La_Omc}
  \b\{ x + t r \bsep x \in \La, t \in [0, 1], r \in \Rg \b\} \subset
  \Oma \cup \Omi.
\end{equation}

Next, we define the set of interface bonds
\begin{displaymath}
  \Bi := \B\{ b = (x, x+\eps r) \Bsep x \in \L, r \in \Rg, (x, x+\eps r)
  \subset \Omi^\per \B\}.
\end{displaymath}
Finally, we define an interface functional $\Ei \in \CC^2(\Ys)$ such
that
\begin{equation}
  \label{eq:intro:defn_Fi_1}
  \Ei(y) = \eps^d \Fi\B( \b( \Da{r} y(x) ; (x, x+\eps r) \in \Bi \b) \B),
\end{equation}
that is, the interface functional $\Ei$ is given as a function of the
finite differences $\Da{r} y(x)$ of bonds $(x, x+\eps r)$ that are
contained in the interface region. Note also the volumetric scaling
$\eps^d$.

With this notation, we set
\begin{equation}
  \label{eq:intro:Eac}
  \Eac(y_h) := \eps^d \sum_{x \in \La} V\b(\Da{\Rg} y_h(x)\b)
  + \int_{\Omc} W(\D y_h) \dx + \Ei(y_h).
\end{equation}

The interface functional $\Ei(y_h)$ specifies the different variants
of a/c methods. It is easy to see that the functionals $\Eqce$ and
$\E_{\rm gc}$, discussed above, fit this framework (in the case of
$\Eqce$ we have to drop the assumption \eqref{eq:intro:noint_La_Omc}).

If we define the total a/c energy as $\Eatot := \Eac + \Pac$, where
$\Pac$ is a suitable a/c approximation to $\Pa$ , the a/c
approximation to \eqref{eq:intro:minEa} is
\begin{equation}
  \label{eq:intro:min_ac}
  y_\ac \in \underset{y_h \in \YshA}{\rm argmin} \, \Eactot(y_h).
\end{equation}
If $y_\ac$ solves \eqref{eq:intro:min_ac}, then it is a critical point
of $\Eactot = \Eac + \Pac$:
\begin{equation}
  \label{eq:intro:crit_ac}
  \b\< \del\Eac(y_\ac) + \del\Pac(y_\ac), u_h \b\> = 0
  \qquad \forall u_h \in \Ush.
\end{equation}

\paragraph{The locality and scaling conditions}
\label{sec:intro:loc_scal}
We define notation for first and second partial derivatives of $\Fi$
as follows: for ${\bf g} = (g_b)_{b \in \Bi}$ let
\begin{displaymath}
  \pp_b \Fi\b( (g_b)_{b \in \Bi} \b) := \frac{\pp \Fi\b(
    (g_b)_{b \in \Bi} \b)}{\pp g_b}, \quad \text{and} \quad
  \pp_b \pp_{b'} \Fi\b( (g_b)_{b \in \Bi} \b) := \frac{\pp^2 \Fi\b(
    (g_b)_{b \in \Bi} \b)}{\pp g_b \pp g_{b'}}.  
\end{displaymath}
We extend the definition periodically: if $b \in \Bi$ and $\xi \in
2\Z^d$ then $\pp_{\xi+b} \Fi := \pp_{b} \Fi$, and we make a similar
definition for the second partial derivatives.
%
%
In our analysis we will require two crucial properties on $\Fi$, which
we call the {\em locality and scaling conditions}:

The {\em locality condition}
\begin{equation}
  \label{eq:intro:Fi_loc}
   \pp_{(x,x+\eps r)} \pp_{(x', x'+\eps s)} \Fi(y) = 0 \qquad
  \begin{array}{l}
    \text{for all bonds } (x, x+\eps r), (x', x' + \eps s) \in \Bi \\
    \text{such that } x \neq x',
  \end{array}
\end{equation}
implies that the same bonds interact through $\Ei$ as in the atomistic
model. This condition can be weakened, by requiring that only bonds
within an $O(\eps)$ distance interact, however, such a more general
condition would add additional notational complexity.

In the {\em scaling condition} we assume that there exist constants
$M_{r,s}^\i \geq 0$, $r, s \in \Rg$, such that
\begin{equation}
  \label{eq:intro:Fi_scal}
   \b\| \pp_{(x, x+\eps r)} \pp_{(x, x+\eps s)} \Fi(y) \b\| \leq~
   \cases{
     M_{r,s}^\i, &
     \quad \forall (x, x+\eps r), (x, x+\eps s) \in \Bi, \\[2mm]
     \smfrac12 M_{r,s}^\i, &
     \quad \text{if } {\rm length}\b( \pp\Omi^\per \cap (x, x+\eps r)\b) >
     0.
   }
\end{equation}
This condition effectively yields an $O(1)$ Lipschitz bound for
$\del\Ei$ in the function spaces we will use. The scaling aspect
enters through an implicit assumption on the magnitude of the
constants $M_{r,s}^\i$, namely, we will assume throughout that the
constant
\begin{equation}
  \label{eq:intro:defn_Mi}
  M^\i := \sum_{r \in \Rg} \sum_{s \in \Rg} |r| |s| M_{r,s}^\i
\end{equation}
is of the same order of magnitude as the constant $M^\a$ defined in
\eqref{eq:intro:defn_Ma}.

\begin{remark}
  \label{rem:12_factor}
  The factor $\smfrac12$ for bonds on the boundary of the interface
  region is not strictly necessary, since it can be removed by simply
  replacing $M_{r,s}^\i$ with $2 M_{r,s}^\i$, however, if stated as
  above it makes the statements of the results in \S\ref{sec:2d}
  slightly sharper, and moreover simplifies the argument in
  \eqref{eq:2d:Rac1est_Ti_pre}.

  The necessity of this factor is related to the fact that we allow
  $\Fi$ to depend on bonds that lie on the boundary of $\Omi^\per$;
  this is made clear in Proposition \ref{th:2d:Sac} where we construct
  a stress function for $\Eac$. Note that if we did not allow $\Fi$ to
  depend on these boundary bonds, then it would in fact be impossible
  to construct patch test consistent a/c methods for non-flat a/c
  interfaces.
\end{remark}


\section{A Framework for the a priori Error Analysis of a/c
  Methods}
\label{sec:fram}
When analyzing the error of a numerical method, one should first of
all determine the main quantities of interest. For a/c methods, one is
usually interested in energy differences between homogeneous lattices
and lattices with defects, or critical loads at which defects form or
move (i.e., bifurcation points). Since the present paper is mostly
theoretical, we will simply focus on the error in the deformation
gradient. We note, however, that many aspects of this analysis are
crucial ingredients for the analysis of energy differences (see, e.g.,
\cite{OrtShap:2011a}) and would usually also enter an analysis of
bifurcation points.

We assume from now on that $d \in \{1, 2\}$. To execute the abstract
framework of this section also in 3D, several technical tools as well
as the central consistency result need to be developed first.

\subsection{Discrete and continuous functions}
\label{sec:fram:disc_cts}
In the following analysis it will be important to extend the a/c
functional $\Eac$ to all functions $y \in \Ys$. To that end, we first
define piecewise affine interpolants with respect to an atomistic mesh
$\Teps$.

We take a subdivision of the scaled unit cube $\eps (0, 1)^d$ into
$d$-simplices (in 1D the interval $\eps (0, 1)$; in 2D two symmetric
triangles; compare with the triangulation of the atomistic region in
Figure \ref{fig:intro:meshes}(d)), which we extend periodically to a
triangulation $\Tepsext$ of $\R^2$ with vertex set $\Lext$. The
restriction of $\Tepsext$ to $\Om$ is denoted by $\Teps$. Each discrete
function $v : \Lext \to \R^k$ will from now on be identified in a
canonical way with its continuous piecewise affine interpolant $v \in
\PI(\Tepsext)^k$.

For future reference we denote the sets of edges of $\Teps$ and
$\Tepsext$, corresponding to the definitions of $\Edgh$ and $\Edghext$
in \S\ref{sec:intro:galerkin}, by $\Edg$ and $\Edgext$.

\paragraph{Ambiguity of continuous interpolants}
If $y_h \in \Ysh$ then $y_h$ can also be interpreted as a member of
$\Ys$ and therefore has two, possibly different, continuous
interpolants. To distinguish them, we make the convention that the
symbol $y_h$ always denotes the interpolant in $\PI(\Th)^d$, while a
symbol $y$ always denotes the interpolant in $\PI(\Teps)^d$. If we
wish to evaluate the $\PI(\Teps)^d$-interpolant of a function $y_h \in
\Ysh$ then we will write $I_\eps y_h$.

To compare a $\PI(\Teps)^d$-interpolant with a
$\PI(\Th)^d$-interpolant, we use the following lemma. In 1D the result
is easy to establish; in 2D it depends on a technical tool that we
introduce in \S\ref{sec:aux:bdl}. The proof is given in the appendix.

\begin{lemma}
  \label{th:fram:Dbarvh_Dvh}
  Let $d \in \{1,2\}$; then, for all $y_h \in \Ysh$ and $p \in
  [1,\infty]$, we have 
  \begin{displaymath}
    \| \D I_\eps y_h \|_{\LL^p(\Om)} \leq \| \D y_h \|_{\LL^p(\Om)},
  \end{displaymath}
  where we recall that we have defined $\| \D v \|_{\LL^p} = \|\,|\D
  v|_p \|_{\LL^p}$.
\end{lemma}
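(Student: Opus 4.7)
I would treat the two dimensions separately. In $d = 1$, the triangulation $\Teps$ refines $\Th$: since $\Th$-vertices lie in $\Lext$, each $\Teps$-interval $I$ is contained in some $T_h \in \Th$, where $y_h$ is already affine; the $\Teps$-nodal interpolant $I_\eps y_h$ restricted to $I$ agrees with $y_h$ at the two endpoints and is itself affine, so $I_\eps y_h \equiv y_h$ on $I$. Hence $\D I_\eps y_h = \D y_h$ a.e.\ on $\Om$, and the asserted inequality holds as an equality.

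The two-dimensional case is substantially more delicate, because $\Teps$ need not refine $\Th$: a single triangle $T \in \Teps$ may be crossed by edges of $\Th$, and on such $T$ a naive pointwise bound of the form $|T|\, |\D I_\eps y_h|_T|_p^p \leq \int_T |\D y_h|_p^p \dx$ can in fact fail (one can check this on a small explicit example with a $\Th$-edge threading a single $\Teps$-triangle). The argument therefore cannot be localized to individual $\Teps$-triangles; it must exploit cancellations between neighbouring triangles, which is exactly what the bond density lemma of \S\ref{sec:aux:bdl} is designed to capture.

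The plan is the following. For each $T \in \Teps$ with vertices $p_0, p_1, p_2$, the affine interpolation conditions yield, for $i = 1, 2$,
\begin{displaymath}
  \D I_\eps y_h|_T\, (p_i - p_0) \;=\; y_h(p_i) - y_h(p_0) \;=\; \int_{b_{0i}} \D y_h \, \widehat{b_{0i}} \ds,
\end{displaymath}
where $b_{0i} := (p_0, p_i) \in \Edg$ is the bond connecting $p_0$ to $p_i$ and $\widehat{b_{0i}}$ is its unit tangent. Thus $\D I_\eps y_h|_T$ is determined by two edge-averaged directional derivatives and can be written symmetrically as a linear combination of the three bond-differences $D_b y_h$ associated with the edges of $T$. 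Because the canonical subdivision of the unit cell produces only finitely many (in fact, two, up to translation) reference shapes for $T \in \Teps$, the coefficients in this representation depend only on lattice geometry, not on $T$. Applying Jensen's inequality to the convex function $|\cdot|_p^p$ on each bond-average then bounds $|T|\, |\D I_\eps y_h|_T|_p^p$ by a universal weighted sum of the line integrals $\int_b |\D y_h\, \widehat b|_p^p \ds$ over the three bonds of $T$.

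Summing over $T \in \Teps$ converts this into a single expression $\sum_{b \in \Edg} \omega_b \int_b |\D y_h\, \widehat b|_p^p \ds$ with lattice-universal weights $\omega_b$. The final step invokes Shapeev's bond density lemma, applied separately to each of the (finitely many) bond directions appearing as edges of $\Teps$, to re-express this weighted sum of line integrals exactly as the volume integral $\int_\Om |\D y_h|_p^p \dx$. The main obstacle I expect is precisely this last, essentially combinatorial, step: one must verify that the geometric coefficients $\omega_b$ produced by the per-triangle identity are exactly those for which BDL yields constant $1$ (rather than some triangulation-dependent constant), and this relies on the specific symmetries of the canonical $\Teps$-subdivision together with the algebraic content of the bond density lemma.
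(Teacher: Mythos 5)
Your argument is essentially the paper's: both rest on the observation that along the coordinate-direction edges of $\Teps$ the derivative of the interpolant equals the bond average of $\D y_h$, followed by Jensen's inequality and the bond density lemma (applied in the end on the coarse mesh $\Th$, where $\D y_h$ is piecewise constant) to reassemble the volume integral with constant exactly $1$. One caution: you should work with the two coordinate-direction edges of each $T \in \Teps$ and the splitting $|\mG|_p^p = |\mG e_1|_p^p + |\mG e_2|_p^p$ rather than a ``symmetric'' combination over all three edges --- including the diagonal would generically cost you the constant $1$ --- and the combinatorial step you worry about then reduces to noting that each coordinate bond is the relevant edge of exactly two $\Teps$-triangles of area $\eps^2/2$.
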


\paragraph{Extension of the a/c energy}
Before we extend the a/c energy $\Eac$ we make one last technical
assumption, which considerably simplifies the subsequent analysis. We
shall assume from now on, that
\begin{displaymath}
  \Tha \cup \Thi \subset \Teps.
\end{displaymath}
If all atoms in $\L \cap (\Oma \cup \Omi)$ are vertices of
$\Tha\cup\Thi$, which is not uncommon, then this is no restriction.

With these conventions the a/c energy $\Eac$ defined in
\eqref{eq:intro:Eac} can be defined canonically for functions $\tily =
y + u_h \in \Ys + \Ush$ by the same formula:
\begin{displaymath}
  \Eac(\tily) = \int_{\Omc} W(\D \tily) \dx 
  + \eps^d \sum_{x \in \La} V\b(\Da{\Rg} \tily(x)\b) + \Ei(\tily)
  \qquad \text{for } \tily \in \Ys + \Ush.
\end{displaymath}
We also assume that $\Pac$ has a suitable extension to $\Ys$. It
should be stressed that for general $y_h \in \Ysh$, $\Eac(y_h) \neq
\Eac(I_\eps y_h)$.

\subsection{Measuring smoothness; An interpolation error estimate}
\label{sec:fram:smoothness_interperr}
The three main ingredients in the a priori error analysis of
Galerkin-like approximations are (i) consistency, (ii) stability, and
(iii) an interpolation error estimate. We begin by establishing the
latter. To that end, we first need to find a convenient {\em measure
  of smoothness} for discrete functions $y \in \Ys$.

\paragraph{Measuring smoothness in terms of local oscillation}
\label{sec:fram:osc}
There are several possibilities to measure the ``smoothness'' of a
discrete function. The most obvious is possibly the use of higher
order finite differences, e.g., $\Da{e_i}\Da{e_j} y(x)$. If, in
\S\ref{sec:fram:disc_cts}, we had chosen continuous interpolants
belonging to $\WW^{2,\infty}$, then we would be able to simply
evaluate the second derivatives $\D^2 y$. However, since the
interpolants we use are piecewise affine, the second derivative of $y$
is the measure $\jl\D y\jr \otimes \nu \ds\b|_{\Edgext}$, where $\jl\D
y\jr$ denotes the jump of $\D y$ across an element edge, and $\ds$ the
surface measure.

This last observation motivates the idea to measure smoothness of $y$
by the local oscillation of $\D y$. We define the {\em oscillation
  operator}, for measurable sets $\omega \subset \R^d$, and for $y \in
\Ys$, as
\begin{equation}
  \label{eq:fram:defn_osc}
  {\rm osc}(\D y; \omega) := \underset{x, x' \in \omega}{\rm
    ess~sup}\, \frac{\b| \D y(x) - \D y(x') \b|}{\eps}.
\end{equation}

The sets $\omega$ that arise naturally in our analysis will always
have $O(\eps)$ diameter, which is the reason for the
$\eps^{-1}$-scaling in the definition of ${\rm osc}$.

Note, in particular, that if $y$ were twice differentiable, and if
${\rm diam}(\omega) \leq C \eps$, then we would obtain
\begin{displaymath}
   {\rm osc}(\D y; \omega) \leq \frac{{\rm diam}(\omega)}{\eps} \,
   \| \D^2 y \|_{\LL^\infty(\omega)} 
   \leq C \| \D^2 y \|_{\LL^\infty(\omega)},
\end{displaymath}
which further illustrates that the oscillation operator is a
reasonable replacement for $\D^2 y$ to measure the local smoothness of
a piecewise affine function.

\paragraph{Interpolation error estimate}
\label{sec:fram:interp}
The smoothness measure we defined in \S\ref{sec:fram:osc} yields a
simple proof of an interpolation error estimate; see Appendix
\ref{sec:app_proofs}.

\begin{lemma}
  \label{th:fram:interp_err}
  Let $d \in \{1,2\}$ and suppose that $\Tha \cup \Thi \subset \Teps$;
  then there exists a constant $C_I$, which depends only on the shape
  regularity of $\Th$, such that, for all $y \in \Ys$, $p \in [1,
  \infty)$,
  \begin{displaymath}
    \b\| \D (y - I_h y) \b\|_{\LL^p(\Om)} \leq C_I \bg\{
    \sum_{T \in \Tepsc} |T|
    \B[ {\rm h}_T \, {\rm osc}(\D y; \omega_T^\c) \B]^p \bg\}^{1/p},
  \end{displaymath}
  where, for $T \in \Teps$,
  \begin{equation}
    \label{eq:fram:defn_omTc_hTeps}
    \omega_T^\c := \Omc^\per \cap \bigcup \b\{T' \in \Teps^\per \bsep T \cap T' \neq
    \emptyset \b\}, \quad \text{and} \quad 
    {\rm h}_T := \max_{x \in T} |h(x)|.
  \end{equation}
  Similarly, for $p = \infty$, we have 
  $\b\| \D (y - I_h y) \b\|_{\LL^\infty(\Om)} \leq C_I \max_{T \in
      \Tepsc} 
    \b[ {\rm h}_T \, {\rm osc}(\D y; \omega_T^\c) \b]$.    
\end{lemma}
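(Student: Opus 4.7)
The plan is a two-step reduction followed by a local Bramble--Hilbert-type estimate in which the oscillation of $\D y$ plays the role of a second derivative.

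First I would reduce the estimate to the continuum region. The hypothesis $\Tha\cup\Thi\subset\Teps$ says that every $T_h\in\Tha\cup\Thi$ is itself an atomistic triangle, so $y|_{T_h}$ is affine; since $I_h y|_{T_h}$ is also affine and agrees with $y$ at the $d+1$ vertices of $T_h$, uniqueness of the affine interpolant forces $I_h y = y$ on $T_h$. Hence $\D(y-I_h y)\equiv 0$ on $\Omi^\per\cup\Oma^\per$, and the $\LL^p$ norm reduces to $\|\D(y-I_h y)\|_{\LL^p(\Omc)}$.

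For the estimate on $\Omc$ I would work one coarse element at a time. Fix $T_h\in\Thc$ with vertices $v_0,\dots,v_d$, barycentric basis $\phi_0,\dots,\phi_d$ (so $\sum_i\nabla\phi_i=0$ and $\sum_i v_i\otimes\nabla\phi_i=I$), and a fine triangle $T\subset T_h$ together with a reference point $x_0\in T$. Writing $y(v_i)-y(x_0)=\int_{\gamma_i}\D y\cdot\dd\xi$ along a piecewise-linear path $\gamma_i\subset T_h$ from $x_0$ to $v_i$ yields the representation
$$\D I_h y\big|_{T_h} - \D y\big|_T = \sum_{i=0}^{d}\B[\int_{\gamma_i}\b(\D y(\xi)-\D y|_T\b)\cdot\dd\xi\B]\otimes\nabla\phi_i.$$
The integrand is piecewise constant on the atomistic triangles crossed by $\gamma_i$, and each jump of $\D y$ across a fine interior edge is at most $\eps\cdot{\rm osc}(\D y;\omega_{T'}^\c)$ for a fine triangle $T'$ adjacent to that edge. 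Telescoping these jumps along $\gamma_i$, together with the shape-regularity bounds $|\nabla\phi_i|\lesssim 1/h_{T_h}$ and $|\gamma_i|\lesssim h_{T_h}$, then controls $|\D(y-I_h y)|$ on $T$ by a weighted sum of local oscillations on the fine triangles visited by the paths.

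To assemble the $\LL^p$ norm I would raise the pointwise bound to the $p$-th power, integrate over $T$, sum over $T\in\Tepsc$ with $T\subset T_h$, and then exchange the order of summation so that each fine triangle $T'$ is charged with its \emph{own} local oscillation ${\rm osc}(\D y;\omega_{T'}^\c)$ rather than with a crude maximum over $T_h$. Choosing the paths $\gamma_i$ in a shape-regular way (e.g.\ straight segments through a fixed interior reference point of $T_h$) guarantees that each fine triangle lies on a uniformly bounded number of such paths, so H\"older's inequality absorbs the path-length factor into a single $h_T^p$ with $h_T\lesssim h_{T_h}$. Summing over $T_h\in\Thc$ delivers the claimed bound, and the $p=\infty$ case follows by the same argument with suprema in place of sums. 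The main obstacle is this last bookkeeping step: a naive use of the pointwise bound gives only the inferior estimate $h_{T_h}\max_{T'\subset T_h}{\rm osc}(\D y;\omega_{T'}^\c)$, and recovering the sharp per-triangle form $h_T\,{\rm osc}(\D y;\omega_T^\c)$ requires a careful choice of path system and double-counting argument. This is also where the restriction $d\in\{1,2\}$ enters, since shape-regular routing of path fans in a coarse simplex is elementary in one and two dimensions but considerably more delicate in three.
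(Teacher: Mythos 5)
Your Step 1 (reduction to $\Omc$) and the path-integral representation of $\D I_h y|_{T_h}-\D y|_T$ are correct, but the estimation step has a genuine gap that no choice of path system or double-counting can repair for finite $p$. The bound you extract from a path $\gamma_i$ has the form $|\gamma_i|\cdot\sup_{\xi\in\gamma_i}|\D y(\xi)-\D y|_T|$ with the supremum controlled by the telescoped jumps, giving the pointwise estimate $|\D(y-I_hy)|\lesssim\eps\sum_{T'\in P(T)}{\rm osc}(\D y;\omega_{T'}^\c)$ on $T$, where $P(T)$ collects the fine triangles crossed. This pointwise bound is itself too weak by a factor $h_{T_h}/\eps$, not merely badly bookkept. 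Take $y=\yF+c\,\varphi$ with $\varphi$ the \emph{fine-mesh} hat function at a coarse vertex $v_0$ of some $T_h\in\Thc$ and $|c|=\eps^2$: then only the $O(1)$ fine triangles around $v_0$ carry a nonzero oscillation, each with ${\rm osc}(\D y;\omega_{T'}^\c)\sim 1$, so the right-hand side of the lemma is $\sim\eps^{2/p}h_{T_h}$; the true error $\D(y-I_hy)=c\otimes\D\varphi-c\otimes\D\phi_0$ is of size $\eps$ on an $O(\eps^2)$ patch and $\eps^2/h_{T_h}$ elsewhere, consistent with the lemma. But every path terminating at $v_0$ crosses that patch, so your pointwise bound is $\gtrsim\eps$ on essentially all of $T_h$, and its $\LL^p(T_h)$-norm, $\sim h_{T_h}^{2/p}\eps$, exceeds the claimed bound by $(h_{T_h}/\eps)^{2/p-1}\to\infty$ for $p<2$. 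The loss comes from replacing $\int_{\gamma_i}(\D y-\D y|_T)\cdot{\rm d}\xi$, whose integrand is large only on an $O(\eps)$-long portion of $\gamma_i$, by length times supremum. Independently, the overlap counts in your redistribution step also fail: the number of fine triangles $T$ whose fan crosses a fixed $T'$ scales like $h_{T_h}^2/(\eps\,{\rm dist}(T',v_i))$, i.e.\ $(h_{T_h}/\eps)^2$ rather than the required $O(h_{T_h}/\eps)$ when $T'$ abuts a coarse vertex. (Your argument does deliver the $p=\infty$ case, where the crude bound suffices.)

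The paper avoids this entirely by regularizing first: it constructs a $\CC^1$-conforming Hsieh--Clough--Tocher interpolant $\tilz$ of each component $z=y_i$ whose derivative degrees of freedom are \emph{local averages} of $\D z$ over patches, proves the two local estimates $\|\D z-\D\tilz\|_{\LL^p(T)}\lesssim\eps|T|^{1/p}{\rm osc}(\D z;\omega_T^\c)$ and, via an inverse inequality, $\|\D^2\tilz\|_{\LL^p(T)}\lesssim|T|^{1/p}{\rm osc}(\D z;\omega_T^\c)$, and then applies the standard interpolation estimate $\|\D(\tilz-I_h\tilz)\|_{\LL^p(\Omc)}\lesssim\|h\,\D^2\tilz\|_{\LL^p(\Omc)}$ together with $I_h z=I_h\tilz$ (the two agree at lattice points). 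The delicate local accounting is thereby delegated to the classical Bramble--Hilbert machinery, which is applied element by element and never sees paths of length $h_{T_h}$. If you wish to keep a direct argument, you would at minimum have to weight each crossed triangle by the length of $\gamma_i$ inside it rather than by $|\gamma_i|$, and the resulting combinatorics is substantially harder than the double-counting you describe.
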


\subsection{The stability assumption}
\label{sec:fram:stab}
The stability of a/c methods relies, firstly, on the stability of
atomistic models as well as their Cauchy--Born approximations. It
requires a thorough understanding of the physics of a model and in
particular more specific information about the interaction
potential. Since the focus of the present work is the consistency of
a/c methods, we will formulate stability as an {\em assumption}.

The simplest notion of stability one may use, which is also closely
connected to local minimality, is coercivity of the second variation:
\begin{equation}
  \label{eq:fram:stab_W12}
    \< \ddel \Eactot(y_h) u_h, u_h \> 
    \geq c_0 \| \D u_h \|_{\LL^2(\Om)}^2 \qquad \forall\, u_h \in \Ush,
\end{equation}
where $c_0 > 0$, and $y_h \in \Ysh$ is a suitable deformation in a
neighbourhood of the atomistic solution $y_\a$, e.g., $y_h = I_h
y_\a$. The choice of norm is motivated by the fact that the
Cauchy--Born model, and hence the atomistic model, are closely related
to second order elliptic differential equations.


Examples of sharp stability estimates for a/c methods in 1D can be
found in \cite{Dobson:qce.stab, Ortner:qnl.1d, XHLi:genqnl1d,
  XHLi:eam1d, BvK:blend1d}. For pair interactions in 2D the stability
of Shapeev's method \cite{Shapeev:2010a} is established in
\cite{OrtShap:2011a}.

More generally, for some $p \in [1, \infty], p' = p / (p-1)$, we may
assume an inf-sup condition of the form
\begin{equation}
  \label{eq:fram:stab_W1p}
  \inf_{\substack{u_h \in \Ush \\ \| \D u_h \|_{\LL^{p}(\Om)} = 1}}
  \sup_{\substack{v_h \in \Ush \\ \| \D v_h \|_{\LL^{p'}(\Om)} = 1}}
  \b\< \ddel \Eactot(y_h) u_h, v_h \b\> \geq c_0,
\end{equation}
for some constant $c_0 > 0$. The condition \eqref{eq:fram:stab_W1p} is
usually difficult to prove, especially for $p \neq 2$, and may indeed
be false in general. We will only use it to demonstrate how such a
stability result motivates consistency estimates in different negative
norms. Examples of 1D inf-sup stability estimates for a/c methods can
be found in \cite{Dobson:2008a, Dobson:arXiv0903.0610,
  MakrOrtSul:qcf.nonlin, OrtnerSuli:2008a}.

\subsection{Outline of an a priori error analysis}
\label{sec:fram:err}
The following outline of an a priori error analysis depends on a
stability assumption that we will not prove. Moreover, since it
primarily serves to motivate the {\em consistency problem}, and since
a rigorous derivation would be more involved without yielding much
additional insight, some steps will be kept vague. Most of these steps
are easily made rigorous; the main assumption we make below, which is
in fact very difficult to justify rigorously, is that $I_h y_\a$ and
$y_\ac$ are ``sufficiently close''. See \cite{MakrOrtSul:qcf.nonlin,
  Ortner:qnl.1d, OrtnerSuli:2008a, BvK:blend1d} for similar analyses
in 1D where all steps are rigorously justified, and
\cite{OrtnerWang:2009a,OrtShap:2011a} for a similar semi-rigorous
framework, where a proof of this step is replaced by an assumption.

%

Let $y_\a$ satisfy \eqref{eq:intro:Ea_crit}, $y_\ac$ satisfy
\eqref{eq:intro:crit_ac}, and suppose that the stability assumption
\eqref{eq:fram:stab_W1p} holds with $y_h = I_h y_\a$. Let $e_h :=
y_\ac - I_h y_\a$. Moreover, suppose that $\| \D I_h y_\a - \D y_\ac
\|_{\LL^\infty}$ is sufficiently small so that the following
approximation can be made precise:
\begin{align*}
  \b< \ddel\Eactot(I_h y_\a) e_h, v_h \b\>
  \approx~& \int_0^1 \b\< \ddel \Eactot(I_h y_\a + t e_h) e_h, v_h \b\>
  \dt \\
  =~& \b\< \del(\Eac+\Pac)(y_\ac) - \del(\Eac+\Pac)(I_h y_\a), v_h \b\>.
\end{align*}
Taking the supremum over all $v_h \in \Ush$, and invoking the inf-sup
condition \eqref{eq:fram:stab_W1p} and the criticality condition
\eqref{eq:intro:crit_ac}, we obtain
\begin{displaymath}
  c_0 \b\| \D e_h \b\|_{\LL^p(\Om)} \lesssim \b\| \del(\Eac+\Pac)(I_h
  y_\a) \b\|_{\WW^{-1,p}_h},
\end{displaymath}
where we define
\begin{displaymath}
  \| \Phi \|_{\WW^{-1,p}_h} := \sup_{\substack{v_h \in \Ush \\ \| \D
      v_h \|_{\LL^{p'}} = 1}} \b\< \Phi, v_h \b\>,
  \qquad \text{for } \Phi \in \Ush^*.
\end{displaymath}

We split the consistency error $\| \del(\Eac+\Pac)(I_h y_\a)
\|_{\WW^{-1,p}_h}$ into three separate contributions:
\begin{align}
  \notag
  c_0 \b\| \D e_h \b\|_{\LL^p(\Om)} \lesssim~& 
  \b\| \del(\Eac+\Pac)(I_h y_\a) - \del(\Eac+\Pac)(y_\a)
  \b\|_{\WW^{-1,p}_h}  \\
  \notag
  & + \b\| \del\Eac(y_\a) - \del\Ea(y_\a) \b\|_{\WW^{-1,p}_h}
  + \b\| \del\Pac(y_\a) - \del\Pa(y_\a) \b\|_{\WW^{-1,p}_h} \\
  \label{eq:fram:splitting}
  =:~& \Ecoarse + \Emodel + \Eext.
\end{align}
where we have used \eqref{eq:intro:Ea_crit}, and the extension of
$\Eac$ and $\Pac$ for all deformations $y \in \Ys$ constructed in
\S\ref{sec:fram:disc_cts}.

The {\em coarsening error}, $\Ecoarse$, can be bounded by Lipschitz
estimates for $\del(\Eac+\Pac)$ and an interpolation error
estimate. Using our assumption that $\Tha \cup \Thi \subset \Teps$ it
is not difficult to derive Lipschitz estimates of the form
\begin{equation}
  \label{eq:fram:Ecoarse_1}
  \Ecoarse \leq (M^\a + M_{\Pac}) \b\| \D I_h y_\a - \D y_\a \b\|_{\LL^p(\Om)},
\end{equation}
where $M^\a$ is a Lipschitz constant for $\pp W$
(cf. \eqref{eq:intro:defnW}), and $M_{\Pac}$ is a Lipschitz constant
for $\del\Pac$.

Combining \eqref{eq:fram:splitting}, and \eqref{eq:fram:Ecoarse_1},
the inequality
\begin{displaymath}
  \b\| \D y_\a - \D y_\ac \b\|_{\LL^p} \leq \b\| \D y_\a - \D I_h y_\a
  \b\|_{\LL^p} + \b\| \D e_h \b\|_{\LL^p},
\end{displaymath}
and the interpolation error estimate of Lemma
\ref{th:fram:interp_err}, we arrive at the following basic error
estimate
\begin{equation}
  \label{eq:fram:errest}
  \b\| \D y_\a - \D y_\ac \b\|_{\LL^p(\Om)}
  \leq \frac{\Emodel + \Eext}{c_0}
  + \frac{c_1}{c_0}   \bg\{
    \sum_{T \in \Tepsc}
    \B[ {\rm h}_T \, {\rm osc}(\D y_\a; \omega_T^\c) \B]^p \bg\}^{1/p},
\end{equation}
where $c_1 = C_I (c_0 + M^\a + M_{\Pac})$.

The {\em consistency error for the external forces}, $\Eext$, depends
on the form of $\Pa$ and $\Pac$ and cannot be discussed at this level
of abstraction. The {\em modelling error}, $\Emodel$, is the focus of the
remainder of the present paper. 

\begin{remark}[Choice of Splitting]
  Suppose, for simpliciy, that $\Pac = \Pa = 0$. In a typical finite
  element error analysis of continuum mechanics problems one would
  usually choose a different splitting of the consistency error:
  \begin{align*}
    \b\| \del\Eac(I_h y_\a) \b\|_{\WW^{-1,p}_h} \leq
    \b\| \del\Eac(I_h y_\a) - \del\Ea(I_h y_\a) \b\|_{\WW^{-1,p}_h}
    + \b\| \del\Ea(I_h y_\a) - \del\Ea(y_\a) \b\|_{\WW^{-1,p}_h}.
  \end{align*}
  This splitting was used in the analysis in \cite{OrtShap:2011a} and
  led to a suboptimal estimate of the modelling error, since it still
  contains some coarsening error.
\end{remark}

\subsection{The consistency problem}
\label{sec:fram:cons}
The main step that remains in obtaining an {\it a priori} error
estimate from \eqref{eq:fram:errest} is the estimation of the {\em
  modelling error}
\begin{displaymath}
  \Emodel = \b\| \del\Eac(y_\a) - \del\Ea(y_\a) \b\|_{\WW^{-1,p}_h}
  = \sup_{\substack{v_h \in \Ush \setminus \{0\}}}
  \frac{\b\< \del\Eac(y_\a) - \del\Ea(y_\a), v_h \b\>}{\| \D v_h \|_{\LL^{p'}(\Om)}}.
\end{displaymath}

Most of the numerical analysis literature on a/c methods estimates
this modelling error only for the case when $\Th = \Teps$. In 1D it is
easy to see that this is sufficient, since $I_\eps v_h = v_h$ in that
case; see also \cite{OrtnerWang:2009a}. The following lemma provides
the main technical step to explain why it is also sufficient in 2D to
consider the case $\Th = \Teps$. Its proof uses arguments similar to
those in the a posteriori error analysis of continuum finite element
methods and is given in Appendix \ref{sec:app_proofs}.

\begin{lemma}
  \label{th:fram:cons_reduceEmodel}
  Assume that $\Thi \cup \Tha \subset \Teps$.  Let $\Phi \in \Us^*$
  and $\Phi_h \in \Us_h^*$ be given in the form
  \begin{align*}
    \b\< \Phi, u \b\> = \int_{\Omc} \sigma : \D u \dx, 
    \quad \text{and} \quad
    \< \Phi_h, u_h \b\> = \int_{\Omc} \sigma : \D u_h \dx,
  \end{align*}
  for all $u \in \Us$, $u_h \in \Ush$, where $\sigma \in
  \POper(\Teps)^{d \times d}$; then there exists a universal constant
  $C_M$ such that, for all $p \in [1, \infty)$,
  \begin{equation}
    \label{eq:fram:cons_reduceEmodel}
    \begin{split}
      \B| \b\< \Phi, I_\eps u_h \b\> - \b\< \Phi_h, u_h \b\> \B|
      \leq~& C_M \eps \, \bg( \sum_{T \in \Tepsc} |T| {\rm osc}(\sigma;
      \omega_T^\c)^p \bg)^{1/p} \b\| \D u_h \b\|_{\LL^{p'}(\Omc)},
      \quad \text{and} \\
      \B| \b\< \Phi, I_\eps u_h \b\> - \b\< \Phi_h, u_h \b\> \B|
      \leq~& C_M \eps \, \b[\max_{T \in \Tepsc} {\rm osc}(\sigma;
      \omega_T^\c) \b] \, \b\| \D u_h \b\|_{\LL^{1}(\Omc)}.
    \end{split}
  \end{equation}
\end{lemma}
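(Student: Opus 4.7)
The starting point is to combine the two functionals into a single integral. Since $\Phi$ and $\Phi_h$ pair with the same piecewise constant stress $\sigma \in \POper(\Teps)^{d\times d}$,
\[
D := \b\< \Phi, I_\eps u_h \b\> - \b\< \Phi_h, u_h \b\> = \int_{\Omc} \sigma : \D w \dx, \qquad w := I_\eps u_h - u_h.
\]
The key qualitative observation is that if $\sigma$ were a single constant tensor on $\Omc$, $D$ would vanish: the divergence theorem would reduce it to $\sigma : \int_{\pp\Omc} w \otimes \nu \ds$, and the assumption $\Thi \subset \Teps$ forces $\Th$ and $\Teps$ to coincide on $\Omi$, so that $I_\eps u_h = u_h$ there and $w \equiv 0$ on $\pp\Omc$. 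Hence $D$ must be entirely controlled by the local oscillation of $\sigma$.

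To quantify this, I would integrate by parts elementwise on $\Tepsc$ and reassemble into a sum over interior $\Teps$-edges of $\Omc$,
\[
D = \sum_e \jl\sigma\jr_e : \int_e w \otimes \nu_e \ds,
\]
the $\pp\Omc$-contributions vanishing as above. Moreover, on any edge $e$ whose two adjacent $\Teps$-elements lie inside a common $T_h \in \Thc$, $u_h$ is affine there and hence reproduced by its $\Teps$-interpolant, so $w \equiv 0$ near $e$ and the edge contributes nothing; only edges close to $\Thc$-element interfaces survive.

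For each surviving edge $e$, pick an adjacent $T \in \Tepsc$. By definition of the oscillation, $|\jl\sigma\jr_e| \le \eps\,{\rm osc}(\sigma;\omega_T^\c)$. Since $w$ vanishes at every $\Teps$-vertex, $\diam T = O(\eps)$, and Lemma~\ref{th:fram:Dbarvh_Dvh} together with shape regularity give $\|\D I_\eps u_h\|_{\LL^\infty(T)} \le C\|\D u_h\|_{\LL^\infty(\omega_T^\c)}$, one obtains $\|w\|_{\LL^\infty(T)} \le C\eps\,\|\D u_h\|_{\LL^\infty(\omega_T^\c)}$ and therefore $\int_e |w|\ds \le C\eps^d\,\|\D u_h\|_{\LL^\infty(\omega_T^\c)}$. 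Multiplying these two bounds, summing the $O(1)$ edges per $T$, and using $\eps^{d+1} = \eps\cdot|T|$ yields
\[
|D| \le C\,\eps \sum_{T \in \Tepsc} |T|\,{\rm osc}(\sigma;\omega_T^\c)\,\|\D u_h\|_{\LL^\infty(\omega_T^\c)}.
\]

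The last step is a discrete H\"{o}lder inequality in $T$, splitting $|T| = |T|^{1/p}|T|^{1/p'}$. The nontrivial factor
\[
\bg(\sum_{T \in \Tepsc} |T|\,\|\D u_h\|_{\LL^\infty(\omega_T^\c)}^{p'}\bg)^{1/p'} \le C \|\D u_h\|_{\LL^{p'}(\Omc)}
\]
follows from the fact that $\D u_h$ is constant on each $T_h \in \Thc$, that shape regularity combined with vertices-in-$\Lext$ forces $|T_h| \ge c\eps^d$ for every $T_h \in \Thc$, and that each $T_h$ appears in only $O(|T_h|/\eps^d)$ of the patches $\omega_T^\c$. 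Assembling everything yields the stated estimate, and the $\LL^1$--$\LL^\infty$ variant is the $p = \infty$ limit of the same chain. The main technical obstacle I anticipate is precisely this last geometric bookkeeping: verifying that the overlap/covering constant depends only on shape regularity of $\Th$ and not on the ratio $h/\eps$, so that the resulting $C_M$ is genuinely universal.
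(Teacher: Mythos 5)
Your proposal is correct and begins exactly as the paper's proof does: elementwise integration by parts reduces the difference to a sum over interior $\Teps$-edges of (jump of $\sigma$) $\times$ $\int_f (I_\eps u_h - u_h)\ds$, with the boundary terms killed by $I_\eps u_h = u_h$ on $\Omi\cup\Oma$, and the jump bounded by $\eps\,{\rm osc}(\sigma;\cdot)$ over an adjacent patch. Where you diverge is in estimating $\int_f |I_\eps u_h - u_h|\ds$: the paper stays entirely on the two-element $\Teps$-patches $\omega_f' = T_+\cup T_-$ and uses an $\LL^1$ trace inequality together with a Poincar\'{e}-type inequality (exploiting that $v = I_\eps u_h - u_h$ vanishes at all $\Teps$-vertices) to get $\int_f|v|\ds \leq (1+\sqrt2)\eps\|\D v\|_{\LL^1(\omega_f')}$, then concludes with H\"{o}lder, the bounded overlap of the $\omega_f'$ (at most three per element), and Lemma~\ref{th:fram:Dbarvh_Dvh} applied once at the very end to replace $\|\D v\|_{\LL^{p'}}$ by $\|\D u_h\|_{\LL^{p'}}$. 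You instead pass through pointwise bounds $\|v\|_{\LL^\infty(T)} \lesssim \eps\|\D u_h\|_{\LL^\infty(\omega_T^\c)}$ and then need the covering estimate $\sum_T |T|\|\D u_h\|_{\LL^\infty(\omega_T^\c)}^{p'} \lesssim \|\D u_h\|_{\LL^{p'}(\Omc)}^{p'}$. That estimate is true --- shape regularity of $\Th$ plus vertices in $\Lext$ give inradius $\gtrsim\eps$ for every $T_h\in\Thc$, so each $T_h$ meets only $O(|T_h|/\eps^d)$ of the patches $\omega_T^\c$ and the constant is independent of $h/\eps$, answering the worry you raise at the end --- but it does make your $C_M$ depend on the shape-regularity constant of $\Th$, whereas the paper's route (overlaps of $\Teps$-patches only) yields a genuinely universal constant as claimed in the statement. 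Two small points: the local bound $\|\D I_\eps u_h\|_{\LL^\infty(T)}\lesssim\|\D u_h\|_{\LL^\infty(\omega_T^\c)}$ does not follow from Lemma~\ref{th:fram:Dbarvh_Dvh}, which is a global $\LL^p$ statement; you should argue it directly from the fact that $\D I_\eps u_h(T)$ is determined by finite differences of $u_h$ along the edges of $T$, which are averages of $\D u_h$ over segments contained in $\Omc$. And your $\LL^1$--$\LL^\infty$ variant is cleanest obtained by running the same edge sum with the roles of the two H\"{o}lder exponents exchanged rather than as a ``$p=\infty$ limit'', since the first estimate is only asserted for $p<\infty$.
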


The estimate \eqref{eq:fram:cons_reduceEmodel}, together with Lemma
\ref{th:fram:Dbarvh_Dvh}, implies the following theorem, where we
use the notation
\begin{equation}
  \label{eq:fram:defn_negnrmeps}
  \| \Phi \|_{\WW^{-1,p}_\eps} := 
  \sup_{\substack{v \in \Us \setminus \{0\} \\ \| \D v \|_{\LL^{p'}} =
      1}}
  \b\< \Phi, v \b\> \qquad \text{for } \Phi \in \Us^*.
\end{equation}

\begin{theorem}
  \label{th:fram:cons_reduceEmodel_2}
  Suppose that $\Tha \cup \Thi \subset \Teps$ and that $y \in \Ys$;
  then, for all $p \in [1, \infty)$,
  \begin{equation}
    \label{eq:fram:cons_reduceEmodel_2}
    \begin{split}
      \b\| \del\Eac(y) - \del \Ea(y) \b\|_{\WW^{-1,p}_h}
      \leq~& M^\a C_M \eps \bg( \sum_{T \in \Tepsc} |T| {\rm osc}(\D y(T);
      \omega_T^\c)^p \bg)^{1/p} \\
      &+ \b\| \del\Eac(y) - \del\Ea(y) \b\|_{\WW^{-1,p}_\eps},
    \end{split}
  \end{equation}
  with corresponding statement for $p = \infty$.
\end{theorem}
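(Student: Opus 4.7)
The plan is to reduce the supremum over $v_h \in \Ush$ defining $\|\cdot\|_{\WW^{-1,p}_h}$ to a supremum over $v \in \Us$ defining $\|\cdot\|_{\WW^{-1,p}_\eps}$, with the discrepancy controlled by Lemma~\ref{th:fram:cons_reduceEmodel}. Concretely, I fix $v_h \in \Ush$ with $\|\D v_h\|_{\LL^{p'}(\Om)} = 1$ and use the decomposition
\begin{equation*}
\b\<\del\Eac(y) - \del\Ea(y), v_h\b\> = \b\<\del\Eac(y) - \del\Ea(y), I_\eps v_h\b\> + \b[\b\<\del\Eac(y), v_h\b\> - \b\<\del\Eac(y), I_\eps v_h\b\>\b].
\end{equation*}
This is legitimate because the formula \eqref{eq:intro:delEa} shows that $\del\Ea(y)$ depends on its test argument only through nodal values on $\Lext$, which agree for $v_h$ and $I_\eps v_h$; thus the $\del\Ea(y)$ contributions cancel from the bracket. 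The first summand is bounded by $\|\del\Eac(y) - \del\Ea(y)\|_{\WW^{-1,p}_\eps} \cdot \|\D I_\eps v_h\|_{\LL^{p'}(\Om)}$, and Lemma~\ref{th:fram:Dbarvh_Dvh} gives $\|\D I_\eps v_h\|_{\LL^{p'}} \leq \|\D v_h\|_{\LL^{p'}} = 1$, producing exactly the second term in the claimed estimate.

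Next I would identify what remains in the bracket. The atomistic sum $\eps^d \sum_{x \in \La} V(\Da{\Rg} y(x))$ and the interface functional $\Ei$ in the definition \eqref{eq:intro:Eac} of $\Eac$ have variations expressible purely through finite differences $\Da{r}$ of the test function; hence they too only see nodal values on $\Lext$ and drop out. What survives is the continuum contribution,
\begin{equation*}
\b\<\del\Eac(y), v_h\b\> - \b\<\del\Eac(y), I_\eps v_h\b\> = \int_{\Omc} \sigma : \b(\D v_h - \D I_\eps v_h\b) \dx, \qquad \sigma := \pp W(\D y)\big|_{\Omc}.
\end{equation*}
Since $\D y$ is piecewise constant on $\Teps$ and, by the standing assumption $\Tha \cup \Thi \subset \Teps$, $\Omc$ is a union of $\Teps$-elements, we have $\sigma \in \POper(\Teps)^{d \times d}$, which is exactly the hypothesis of Lemma~\ref{th:fram:cons_reduceEmodel}.

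To close the argument I apply Lemma~\ref{th:fram:cons_reduceEmodel} with $u_h = v_h$ and the stress $\sigma$ above, yielding
\begin{equation*}
\b|\b\<\del\Eac(y), v_h\b\> - \b\<\del\Eac(y), I_\eps v_h\b\>\b| \leq C_M \eps \bg(\sum_{T \in \Tepsc} |T|\,{\rm osc}(\sigma; \omega_T^\c)^p\bg)^{1/p} \|\D v_h\|_{\LL^{p'}(\Omc)}.
\end{equation*}
From $W(\mF) = V(\mF \Rg)$ the chain rule gives $\pp W(\mF) = \sum_{r \in \Rg} \pp_r V(\mF \Rg) \otimes r$, and combining \eqref{eq:intro:LipV} with \eqref{eq:intro:defn_Ma} then shows $|\pp W(\mF) - \pp W(\mG)| \leq M^\a |\mF - \mG|$; hence ${\rm osc}(\sigma; \omega_T^\c) \leq M^\a\, {\rm osc}(\D y; \omega_T^\c)$ for every $T \in \Tepsc$. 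Using $\|\D v_h\|_{\LL^{p'}(\Omc)} \leq \|\D v_h\|_{\LL^{p'}(\Om)} = 1$ and taking the supremum over admissible $v_h$ produces the inequality \eqref{eq:fram:cons_reduceEmodel_2}; the $p = \infty$ version is identical, invoking the $\LL^1$-variant of Lemma~\ref{th:fram:cons_reduceEmodel}. There is no real obstacle here, as Lemma~\ref{th:fram:cons_reduceEmodel} does all the technical work; the only delicacy is the bookkeeping observation that $\del\Ea$ and the non-continuum parts of $\del\Eac$ are insensitive to the $\Teps$-versus-$\Th$ interpolation choice, which reduces to the fact that they only see finite differences of their test function.
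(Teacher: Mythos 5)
Your proof is correct and follows essentially the same route as the paper: the identity $\<\del\Ea(y), v_h\> = \<\del\Ea(y), I_\eps v_h\>$ (since $\del\Ea$ and likewise the atomistic and interface parts of $\del\Eac$ only see nodal values), the resulting two-term splitting, Lemma~\ref{th:fram:cons_reduceEmodel} with $\sigma = \pp W(\D y)$ for the continuum discrepancy, and Lemma~\ref{th:fram:Dbarvh_Dvh} for the remaining term. Your explicit verification that $\sigma \in \POper(\Teps)^{d\times d}$ and that ${\rm osc}(\sigma;\omega_T^\c) \leq M^\a\,{\rm osc}(\D y;\omega_T^\c)$ via the Lipschitz bound on $\pp W$ is a detail the paper leaves implicit, and it is exactly where the prefactor $M^\a$ in the stated estimate comes from.
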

\begin{proof}
  Since $\Ea(I_\eps y_h)$ uses only point values of $I_\eps y_h$,
  which are the same as for $y_h$, we have
  \begin{displaymath}
    \< \del\Ea(y), u_h
  \> = \< \del\Ea(y), I_\eps u_h \> \qquad \forall\, u_h \in \Ush.
  \end{displaymath}
  Using this fact, we can estimate
  \begin{displaymath}
    \b| \b\< \del \Eac(y) - \del\Ea(y), u_h \b\> \b| 
    \leq \b| \b\< \del\Eac(y), u_h \b\> - \b\< \del\Eac(y), I_\eps u_h \b\>
    \b| + \b| \< \del \Eac(y) - \del\Ea(y), I_\eps u_h \b\> \b|.
 \end{displaymath}
 Due to the assumption that $\Tha \cup \Thc \subset \Teps$, the first
 group can be estimated using Lemma \ref{th:fram:cons_reduceEmodel},
 with $\sigma = \pp W(\D y)$, which yields the first term in
 \eqref{eq:fram:cons_reduceEmodel_2}.

 Using Lemma \ref{th:fram:Dbarvh_Dvh}, the second group can be
 estimated by
 \begin{align*}
   \b| \< \del \Eac(y) - \del\Ea(y), I_\eps u_h \b\> \b| \leq~& \b\|
   \del \Eac(y) - \del\Ea(y) \b\|_{\WW^{-1,p}_\eps} \b\| \D I_\eps u_h
   \b\|_{\LL^{p'}} \\
   \leq~& \b\| \del \Eac(y) - \del\Ea(y)
   \b\|_{\WW^{-1,p}_\eps} \b\| \D u_h \b\|_{\LL^{p'}}.
 \end{align*}
 Taking the supremum over all $u_h \in \Ush$ with $\|\D u_h
 \|_{\LL^{p'}} = 1$ yields the stated result.
\end{proof}

\medskip \noindent Applying Theorem
\ref{th:fram:cons_reduceEmodel_2} to the modelling error $\Emodel$,
defined in \eqref{eq:fram:splitting}, we obtain that
\begin{equation}
  \label{eq:fram:Emodel_upperbnd}
  \Emodel \leq \Emodeleps +   M^\a C_M \eps \bg( \sum_{T \in \Tepsc} |T| {\rm osc}(\D y(T);
  \omega_T^\c)^p \bg)^{1/p},
\end{equation}
where
\begin{displaymath}
   \Emodeleps :=  \b\| \del\Eac(y_\a) - \del\Ea(y_\a) \b\|_{\WW^{-1,p}_\eps}.
\end{displaymath}
Even though $\Emodeleps$ is essentially an upper bound for $\Emodel$,
it is usually easier to estimate. The {\em consistency problem} is to
prove a sharp upper bound on $\Emodeleps$.

In \S\ref{sec:e1d} we will discuss two simple 1D examples to determine
what can be expected in more general situations. In Theorem
\ref{th:2d:main_result} we will prove that for an a/c method that is
patch test consistent, and satisfies various other technical
conditions, one obtains
\begin{displaymath}
  \Emodeleps \leq C \,\eps\, \bg\{ \sum_{T \in \Tepsc\cup\Tepsi}
  |T| \b[ {\rm osc}(\D y_\a; \omega_T) \b]^p \bg\}^{1/p},
\end{displaymath}
where $C$ is a constant that is independent of $y_\a$, but does depend
on the interface width, and $\omega_T \subset \Omc\cup\Omi$ is the
interaction neighbourhood defined in \eqref{eq:2d:defn_intnhd}.

Combined with \eqref{eq:fram:Emodel_upperbnd} and
\eqref{eq:fram:errest}, and using the fact that $\eps \leq {\rm h}_T$,
and that $\omega_T \supset \omega_T^\c$ for $T \in \Tepsc$, this bound
yields
\begin{equation}
  \label{eq:fram:errest_v2}
  \b\| \D y_\a - \D y_{\ac} \b\|_{\LL^p}
  \leq \frac{\Eext}{c_0} + \frac{c_2}{c_0} 
  \bg\{ \sum_{T \in \Tepsc\cup\Tepsi}
  |T| \B[ {\rm h}_T \, {\rm osc}(\D y_\a; \omega_T) \B]^p \bg\}^{1/p},
\end{equation}
where $c_2$ is a constant that is independent of $y_\a$. This estimate
closely resembles a typical first-order a priori error estimate for a
continuum mechanics finite element approximation; see also the
interpretation given in \S\ref{sec:intro:main_result}. 

It should be stressed again that \eqref{eq:fram:errest_v2} is not a
rigorous error estimate, but depends on various assumptions made in
the forgoing discussion, most prominently, the stability assumption
\eqref{eq:fram:stab_W1p}, and the assumption that $\|\D I_h y_\a - \D
y_\ac \|_{\LL^\infty}$ is ``sufficiently small''.

\begin{remark}
  The locality of the patches $\omega_T$ is crucial. If ${\rm
    diam}(\omega_T)$ is not of the order $O(\eps)$, then it is
  possible that ${\rm osc}(\D y_\a; \omega_T) \gg 1$ even if $y_\a$ is
  globally smooth; see also \S\ref{sec:2d:rem_locality}.
\end{remark}


\section{Examples in 1D}
\label{sec:e1d}
In the present section we review the consistency analyses of specific
a/c methods to point out the main features and to motivate what may be
expected in the general case. Throughout this section we assume that
$d = 1$, $\Rg = \{ \pm 1, \pm 2\}$, and that $V$ is given by
\begin{displaymath}
  V(\{g_{\pm 1}, g_{\pm 2}\}) = \smfrac12 \b[
  \phi_1(g_1) + \phi_1(g_{-1}) + \phi_2( g_2) + \phi_2(g_{-2}) \b],
\end{displaymath}
where $\phi_1, \phi_2 \in \CC^{2,1}(\R)$ are, respectively, the first
and second neighour interaction potentials, which are assumed to be
symmetric about the origin. We assume that their derivatives
$\phi_{i}'$ and $\phi_{i}''$ have global Lipschitz constants $m_i'$
and $m_i''$.

For the 1D analysis it is convenient to write $x_n = n \eps$, $v_n =
v(x_n)$, and to write all interactions in terms of the backward
difference operator
\begin{displaymath}
  v'_n = \frac{v_n - v_{n-1}}{\eps}.
\end{displaymath}
With this notation the atomistic energy can now be rewritten in the
form
\begin{equation}
  \label{eq:e1d:Ea_bonds}
  \Ea(y) = \eps \sum_{n = -N+1}^N \phi_1(y_n') + \eps \sum_{n = -N+1}^N
  \phi_2(y_n' + y_{n+1}'),
\end{equation}
where we note that $y_n' + y_{n+1}' = \eps^{-1} (y_{n+1} - y_{n-1})$
describes a second neighbour bond.

For future reference we also define the second and third finite
differences
\begin{displaymath}
  v_n'' = \frac{v_{n+1}' - v_n'}{\eps}, \quad \text{and}  \quad
  v_n'''(x) = \frac{v_{n+1}' - 2 v_n' + v_{n-1}'}{\eps^2}.
\end{displaymath}
It is also worth pointing out that $v_n' = \D v(s)$ for all $s \in
(x_{n-1}, x_n)$.

\subsection{Consistency of the QNL method}
\label{sec:e1d:qnl}
We begin with a modelling error analysis for the {\em quasinonlocal
  coupling method} (QNL method) of Shimokawa {\it et al}
\cite{Shimokawa:2004}. The geometrically consistent coupling scheme
\cite{E:2006} and the method proposed by Shapeev \cite{Shapeev:2010a}
reduce to the same method for 1D second neighbour interactions.

The following presentation follows largely \cite{Ortner:qnl.1d}, where
the QNL method is defined as follows: Let $\Laqnl = \{ -K, \dots,
K\}$ for some $K \geq 1$ and $\Lcqnl = \{ -N+1, \dots, N\}
\setminus \Laqnl$; then, for $y \in \Ys$, the QNL energy is defined by
\begin{equation}
  \label{eq:e1d:Eqnl}
  \begin{split}
    \Eqnl(y) = \eps \sum_{n = -N+1}^N \phi_1(y_n')
    ~& + \eps \sum_{n \in \Laqnl} \phi_2(y_n' + y_{n+1}') 
    + \eps \sum_{n \in \Lcqnl} \smfrac12 \b[ \phi_2(2 y_n') +
    \phi_2(2 y_{n+1}') \b].
  \end{split}
\end{equation}
We observe that we have not modified the first neighbour interactions,
but have ``split'' the non-local second neighbour interactions into
local first neighbour interactions in the continuum region.

It is straightforward to rewrite $\Eqnl$ in the form specified in
\eqref{eq:intro:Eac}, with
\begin{displaymath}
  \Omc = [\eps(K+1), 1] \cup (-1, \eps(-K-1)], \quad
  \La = \eps \{-K+1, \dots, K-1\},
\end{displaymath}
and a suitably defined interface functional $\Ei$; however, the form
\eqref{eq:e1d:Eqnl} is more convenient for the analysis.

The following modelling error estimate was first established in
\cite[Thm. 3.1]{Ortner:qnl.1d}. Dobson and Luskin \cite{Dobson:2008b}
treated a quadratic interaction case, using entirely different
analytical techniques that gave an even more detailed analysis of the
error; Ming and Yang \cite{emingyang} used related methods as
\cite[Thm. 3.1]{Ortner:qnl.1d}, but gave a qualitatively less precise
estimate of consistency error. An extension of the result to linear
finite range pair interactions is given in \cite{XHLi:genqnl1d}.

Note also that it is shown in \cite{Dobson:2008b} that the consistency
error of the QNL method in $\ell^p_\eps$-norms is of the order $O(1)$,
that is, the usage of negative norms cannot be avoided.

We will discuss the estimate in detail in
\S\ref{sec:e1d:discussion}. The proof of the following result serves
as a first guidance on how one may approach proofs of consistency of
a/c methods in more general situations. 

\begin{proposition}[Consistency of the QNL Method]
  \label{th:qnl}
  Let $y \in \Ys;$ then
  \begin{displaymath}
    \b\| \del\Eqnl(y) - \del\Ea(y) \b\|_{\WW^{-1,p}_\eps}
    \leq \eps m_2' \b\| y'' \b\|_{\ell^p_\eps(\{-K,K\})}
    + \eps^2 m_2' \b\| y''' \b\|_{\ell^p_\eps(\Lcqnl')}
    + \eps^2 m_2'' \b\| y'' \b\|_{\ell^{2p}_\eps(\Lcqnl)}^2,
  \end{displaymath}
  where $\Lcqnl' = \{-N+1,
  \dots, -K-1\} \cup \{K+2, \dots, N\}$,
\end{proposition}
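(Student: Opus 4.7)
\medskip

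\noindent\textbf{Proof plan.} The approach is to form the difference $\delta\Eqnl(y) - \delta\Ea(y)$, pair it against a test displacement $u \in \Us$, and exhibit it in the ``bond-stress'' form $\eps \sum_n R_n u_n'$, where $R_n$ is a residual stress that vanishes away from the interface. The first observation is that the first-neighbour contributions to both variations coincide, so they cancel exactly, and only the second-neighbour pieces contribute. Rewriting the atomistic second-neighbour term by the index shift $m = n+1$ on the $u_{n+1}'$ piece produces the stress coefficient $\phi_2'(y_{n-1}' + y_n') + \phi_2'(y_n' + y_{n+1}')$; performing the analogous shift on each piece of the QNL second-neighbour term, and respecting the partition $\Laqnl \cup \Lcqnl$, produces a coefficient whose interior contribution reduces, after cancellation with the atomistic one, to
\[
R_n = \bigl[\phi_2'(2y_n') - \phi_2'(y_n' + y_{n+1}')\bigr]\mathbb{1}_{n \in \Lcqnl} + \bigl[\phi_2'(2y_n') - \phi_2'(y_{n-1}' + y_n')\bigr]\mathbb{1}_{n-1 \in \Lcqnl}.
\]
Thus $R_n = 0$ unless $n$ or $n-1$ lies in $\Lcqnl$.

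Next, I separate the nodes into \emph{interior} ones (both $n,n-1 \in \Lcqnl$, which is $\Lcqnl'$ after trivial relabelling) and \emph{interface} ones ($n \in \{-K, K\}$, up to the same relabelling). For interior $n$, I write $y_n' + y_{n+1}' = 2y_n' + \eps y_n''$ and $y_{n-1}' + y_n' = 2y_n' - \eps y_{n-1}''$, so that $R_n$ becomes the symmetric difference
\[
R_n = -\bigl[\phi_2'(2y_n' + \eps y_n'') + \phi_2'(2y_n' - \eps y_{n-1}'') - 2\phi_2'(2y_n')\bigr].
\]
Taylor-expanding each term about $2y_n'$ to first order, using the Lipschitz constant $m_2''$ of $\phi_2''$ for the remainder and the bound $|\phi_2''| \le m_2'$ for the leading coefficient, and exploiting the identity $y_n'' - y_{n-1}'' = \eps y_n'''$, yields
\[
|R_n| \le \eps^2 m_2' |y_n'''| + \tfrac{1}{2}\eps^2 m_2''\bigl[(y_{n-1}'')^2 + (y_n'')^2\bigr] \quad \text{for interior } n.
\]
At the two interface nodes only one of the two bracketed differences survives, and the direct Lipschitz bound on $\phi_2'$ together with $y_{n+1}' - y_n' = \eps y_n''$ gives $|R_n| \le \eps m_2' |y_n''|$.

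Finally, since $\|u'\|_{\ell^{p'}_\eps} = \|\D u\|_{\LL^{p'}(\Om)}$ in 1D, Hölder's inequality gives
\[
\bigl|\langle \delta\Eqnl(y) - \delta\Ea(y), u\rangle\bigr| = \Bigl|\eps\sum_n R_n u_n'\Bigr| \le \|R\|_{\ell^p_\eps}\,\|\D u\|_{\LL^{p'}(\Om)},
\]
so that $\|\delta\Eqnl(y) - \delta\Ea(y)\|_{\WW^{-1,p}_\eps} \le \|R\|_{\ell^p_\eps}$. Splitting $\|R\|_{\ell^p_\eps}$ into interface and interior parts and applying Minkowski's inequality yields the three terms in the stated bound, the quadratic piece collapsing via the identity $\|(y'')^2\|_{\ell^p_\eps} = \|y''\|_{\ell^{2p}_\eps}^2$.

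The main technical obstacle is the bookkeeping: one must verify that the ``interior'' set coincides with $\Lcqnl'$ up to the periodic identification and that the only surviving interface nodes are the two endpoints, which is what distinguishes the desired $O(\eps^2)$ bulk bound from the naive $O(\eps)$ estimate one would obtain by applying the triangle inequality directly to each term of $R_n$. The symmetric second-difference structure of the interior $R_n$ is precisely the manifestation of patch-test consistency, since it forces $R_n = 0$ whenever $y$ is affine.
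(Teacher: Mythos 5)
Your proposal is correct and follows essentially the same route as the paper: summation by parts / reindexing to write the difference as $\eps\sum_n \mR_n u_n'$, observing that $\mR_n$ vanishes in the atomistic region, has single-difference (first-order) structure at the two interface nodes and symmetric second-difference structure in the bulk, then Taylor expansion and weighted H\"{o}lder. The only discrepancies are harmless index conventions at the interface (the surviving nodes are really $n\in\{-K,K+1\}$, with second differences $y''_{-K-1}$ and $y''_{K+1}$ appearing — an off-by-one labelling that the paper's own statement and proof share).
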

\begin{proof}
  Throughout the proof we will make use of the fact that the boundary
  conditions are periodic without comment, treating the boundary as if
  it belonged to the ``interior'' of the continuum region.

  Since the first neighbour interactions as well as the second
  neighbour interactions in the atomistic region are treated
  identically in the atomistic model and the QNL method, we have
  \begin{align*}
    \b\< \del\Ea(y) - \del\Eqnl(y), u \b\> =
    \eps \sum_{n \in \Lcqnl} \B[ ~&\phi_2'(y_n'+y_{n+1}')
    \cdot (u_n' + u_{n+1}') \\[-3mm]
    & - \phi_2'(2y_n') \cdot u_n' - \phi_2'(2y_{n+1}') \cdot u_{n+1}' \B].
  \end{align*}
  Rearranging the sum in terms of the gradients $u_n'$, and using the
  fact that $\Laqnl = \{-K, \dots, K\}$, yields
  \begin{equation}
    \label{eq:e1d:20}
    \b\< \del\Ea(y) - \del\Eqnl(y), u_h \b\> = \eps \sum_{n = -N+1}^N
    \mR_n \cdot u_n',
  \end{equation}
  where $(\mR_n)_{n = 1}^N$ is defined as follows:
  \begin{displaymath} 
    \mR_n = \cases{
      0, 
      & n \in \{-K+1,\dots, K\}, \\
      \phi_2'(y_{n-1}' + y_{n}') - \phi_2'(2 y_n'), 
      & n = K+1, \\
      \phi_2'(y_n' + y_{n+1}') - \phi_2'(2 y_n'), 
      & n = -K, \\
      \phi_2'(y_n' + y_{n+1}') - 2 \phi_2'(2 y_n')
      + \phi_2'(y_{n-1}' + y_n'),
      & n \in \Lcqnl'. }
  \end{displaymath}
  At the interface, $n = K+1$, we have
  \begin{align*}
    \mR_n \leq m_2' \b|y_{n+1}' - y_n'\b| = \eps m_2' \b|y_{n-1}''\b|,
  \end{align*}
  with a similar estimate for $n = - K$.  In the continuum region, $n
  \geq K+2$, or $n \leq - K+1$, the terms $\mR_n$ have second order
  structure, and a second order Taylor expansion yields
  \begin{align*}
    |\mR_n| \leq~& \b|\phi_2''(2 y_n')\b|
    \b|y_{n+1}'- 2 y_n' + y_{n-1}'\b| 
    + \smfrac12 m_2'' \B[ \b|y_{n+1}' - y_n'\b|^2 + \b|y'_n -
    y'_{n_1} \b|^2\B] \\
    =~& \eps^2 m_2' \b| y_n'''\b| + \eps^2 \smfrac12 m_2'' \B[
    \b|y_n''\b|^2 + \b|y_{n-1}''\b|^2 \B].
  \end{align*}
  After inserting the two bounds into \eqref{eq:e1d:20}, and applying
  several weighted H\"{o}lder inequalities one obtains the stated
  estimate.
\end{proof}

\subsection{Inconsistency of the QCE method}
\label{sec:e1d:qce}
As in the previous section, let $\Laqnl = \{-K, \dots, K\}$, $\Lcqnl =
\{-N+1, \dots, N\} \setminus \Laqnl$, and $\La = \eps \Laqnl$; then,
under the assumptions and notation set out at the beginning of
\S\ref{sec:e1d}, the QCE energy functional defined in
\eqref{eq:intro:E_qce} reads
\begin{equation}
  \label{eq:e1d:defn_qce}
  \begin{split}
    \Eqce(y) =~& \eps \sum_{n \in \Laqnl} \smfrac{1}{2} \B[ \phi_1(y_n') +
    \phi_1(y_{n+1}') + \phi_2(y_{n-1}'+y_n') + \phi_2(y_{n+1}' +
    y_{n+2}') \B] \\
    & + \eps \sum_{n \in \Lcqnl} \smfrac{1}{2} \B[ \phi_1(y_n') +
    \phi_1(y_{n+1}') + \phi_2(2 y_{n}') + \phi_2(2 y_{n+1}') \B],
  \end{split}
\end{equation}
noting that $W(\mF) = \phi_1(\mF) + \phi_2(2 \mF)$.

The following result is a variant of
\cite[Thm. 3.2]{OrtnerWang:2009a}. Previous analyses of the QCE method
\cite{Dobson:2008c, Dobson:2008b, emingyang} computed the consistency
error contribution due to the ``ghost forces'' explicitly rather than
estimating their $\WW^{-1,p}_\eps$-residual contribution.

\begin{proposition}
  Let $y \in \Ys$ and $p \in [1,\infty]$; then
  \begin{align*}
    \b\| \del\Eqce(y) - \del\Ea(y) \b\|_{\WW^{-1,p}_\eps} \leq~&
    \eps^{1/p} G + \eps m_2' \b\| y''
    \b\|_{\ell^p_\eps(\mathscr{N}_\i)} \\
    & + \eps^2 m_2' \b\| y''' \b\|_{\ell^p_\eps(\Lcqnl')} + \eps^2
    m_2'' \b\| y'' \b\|_{\ell^{p}_\eps(\Lcqnl \cup \{-K,K\})}^{2p},
  \end{align*}
  where $\Lcqnl'$ is defined as in Proposition~\ref{th:qnl},
  $\mathscr{N}_\i = \{ -K-1, -K, K, K+1 \}$, and
  \begin{displaymath}
    G = \smfrac12 \b[ \b|\phi_2'(2 y_{-K-1}')\b| + \b|\phi_2'(2 y_{-K+1}')\b| +
    \b|\phi_2'(2 y_K')\b| + \b|\phi_2'(2 y_{K+2}')\b| \b].
  \end{displaymath}

  The estimate is sharp in the sense that, for some constant $C$,
  $\smfrac12 \leq C \leq 2$,
  \begin{displaymath}
    \b\| \del\Eqce(\yA) \b\|_{\WW^{-1,p}_\eps} 
    = \b\| \del \Eqce(\yA) - \del\Ea(\yA) \b\|_{\WW^{-1,p}_\eps}
    \geq  C \eps^{1/p} \b|\phi_2'(2 \mA)\b|
    \qquad \forall \mA \in \R.
  \end{displaymath}
\end{proposition}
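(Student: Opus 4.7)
The plan is to mimic the proof of Proposition~\ref{th:qnl} for QNL, but with an extra term arising at the interface because QCE is not patch test consistent. First I would compute the difference $\langle \del\Eqce(y) - \del\Ea(y), u\rangle$ explicitly. The first neighbour contributions are identical in both models (after rearranging the sum in~\eqref{eq:e1d:defn_qce}), so they cancel. The second neighbour part of $\del\Ea$ reads $\eps \sum_n \phi_2'(y_n' + y_{n+1}')(u_n' + u_{n+1}')$, while the second neighbour part of $\del\Eqce$ splits: on $\Laqnl$ it contributes the same non-local bond terms, but on $\Lcqnl$ it contributes the localized expression $\phi_2'(2y_n') u_n'$ (after summing). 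So after rearranging in terms of the gradients $u_n'$, we arrive at a representation
\begin{displaymath}
  \langle \del\Eqce(y) - \del\Ea(y), u\rangle = \eps \sum_{n} \mR_n \cdot u_n',
\end{displaymath}
where $\mR_n$ must be catalogued according to whether $n$ is in the continuum interior, near an interface, or in the atomistic interior.

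Next I would classify $\mR_n$. Far inside the atomistic region and far inside the continuum region, one expects the same structure as QNL. Specifically, for $n \in \Lcqnl'$ (continuum interior), $\mR_n$ has the symmetric form $\phi_2'(y_n' + y_{n+1}') - 2\phi_2'(2y_n') + \phi_2'(y_{n-1}' + y_n')$ which enjoys second-order Taylor cancellation, yielding after expansion a bound $|\mR_n| \leq \eps^2 m_2' |y_n'''| + \eps^2 m_2'' ( |y_n''|^2 + |y_{n-1}''|^2)$, exactly as in the QNL proof. However, at the four special interface indices $n \in \mathscr{N}_\i = \{-K-1,-K,K,K+1\}$, the bookkeeping of non-local vs.\ local second-neighbour bonds fails to balance, and $\mR_n$ splits into two parts: a ``ghost force'' contribution equal (up to $\frac12$) to $\phi_2'(2y_n')$ itself, which does not vanish on homogeneous states, plus a Lipschitz remainder controlled by $\eps\, m_2'\, |y''|$ on the adjacent bonds.

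I would then bound each group using weighted H\"{o}lder inequalities. The continuum-interior contributions give the $\eps^2 m_2' \|y'''\|_{\ell^p_\eps(\Lcqnl')}$ and $\eps^2 m_2'' \|y''\|_{\ell^{2p}_\eps(\Lcqnl)}^2$ terms. The Lipschitz remainder at $\mathscr{N}_\i$ gives the $\eps m_2' \|y''\|_{\ell^p_\eps(\mathscr{N}_\i)}$ term. The ghost force part, being a finite sum supported at four indices with $|\mR_n| \leq \tfrac12 |\phi_2'(2y_n')|$, is handled by the straightforward inequality
\begin{displaymath}
  \Bigl| \eps \sum_{n \in \mathscr{N}_\i} \mR_n u_n' \Bigr|
  \leq \eps \sum_{n \in \mathscr{N}_\i} |\mR_n| \cdot |u_n'|
  \leq \eps^{1/p} G \, \|u'\|_{\ell^{p'}_\eps},
\end{displaymath}
after applying $|u_n'| \leq \eps^{-1/p'} \|u'\|_{\ell^{p'}_\eps}$ and noting that $\|u'\|_{\ell^{p'}_\eps} = \|\D u\|_{\LL^{p'}}$ for piecewise affine $u$. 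Summing the four contributions yields the upper bound.

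For sharpness, evaluating at $y = \yA$ kills everything except the ghost force part, since $\del\Ea(\yA) = 0$ by Proposition~\ref{th:intro:a_patchtest} and all Taylor remainders and differences $|y_n' - y_{n-1}'|$ vanish. Thus $\langle \del\Eqce(\yA), u\rangle = \eps \sum_{n \in \mathscr{N}_\i} \mR_n^0 u_n'$ where $\mR_n^0$ equals the relevant $\pm\tfrac12 \phi_2'(2\mA)$. To extract the lower bound I would construct a test function $u \in \Us$ concentrated near one interface atom, e.g.\ a piecewise affine ``bump'' with $u_n' = \sigma_n$ (appropriate signs) at the four interface indices and decaying linearly elsewhere so that $\|\D u\|_{\LL^{p'}} \lesssim \eps^{1/p'}$, giving $\langle \del\Eqce(\yA), u\rangle \gtrsim \eps \cdot |\phi_2'(2\mA)|$ and hence $\|\del\Eqce(\yA)\|_{\WW^{-1,p}_\eps} \gtrsim \eps^{1/p} |\phi_2'(2\mA)|$. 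The main obstacle is the careful bookkeeping at the interface: one has to verify exactly which bonds appear once, twice, or not at all in each sum, and in particular how the $\tfrac12$ factors from the QCE splitting combine, so that the signs and coefficients of the four ghost force contributions are correct and lead both to the stated $G$ in the upper bound and to a genuinely non-cancelling lower bound.
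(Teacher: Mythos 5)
Your proposal follows essentially the same route as the paper: the upper bound is obtained exactly as for the QNL estimate by rewriting the difference as $\eps\sum_n \mR_n\cdot u_n'$ and isolating the non-vanishing ghost-force coefficients at the interface (the paper itself defers these details to \cite{OrtnerWang:2009a}), and the lower bound by evaluating at a homogeneous state and testing with a displacement whose gradient is supported on the interface bonds. The only bookkeeping point to watch is that the explicit computation places the ghost forces at the gradient indices $\{-K-1,-K+1,K,K+2\}$ with signs $+,-,-,+$ (matching the indices appearing in $G$, which differ from $\mathscr{N}_\i$), so the test function should be supported there with those signs rather than ``near one interface atom''; with that correction your argument reproduces the paper's.
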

\begin{proof}
  The first result can be proven in much the same way as Proposition
  \ref{th:qnl}, by rewriting the first variation $\del\Eqce$ in the
  form $\< \del\Eqce(y), u \> = \eps \sum_{n = -N+1}^N \mR_n \cdot
  u_n'$, and carefully estimating the coefficients $\mR_n$. See
  \cite{OrtnerWang:2009a} for the details of this computation.

  To obtain the opposite estimate for $y = \yA$, a brief computation
  gives
  \begin{displaymath}
    \b\< \del\Eqce(\yA) - \del\Ea(\yA), u \b\> =
    \eps \smfrac12 \phi_2'(2\mA) \b[u_{-K-1}' - u_{-K+1}'  - u_K' +
    u_{K+2}' \b].
  \end{displaymath}
  If we choose $u \in \Us$ such that
  \begin{displaymath}
    u_n' = {\rm sign}\b(\phi_2'(2\mA)\b) \cdot \cases{
      (4 \eps)^{-1/p'}, &  n = -K-1, K+2, \\
      - (4 \eps)^{-1/p'}, & n = -K+1, K, \\
      0, & \text{ otherwise},
    }
  \end{displaymath}
  then $\| \D u \|_{\LL^{p'}(-1,1)} = \| u' \|_{\ell^{p'}_\eps} = 1$, and we
  obtain that
  \begin{displaymath}
    \b\| \del\Eqce(\yA) - \del\Ea(\yA) \b\|_{\WW^{-1,p}_\eps}
    \geq \b\< \del\Eqce(\yA) - \del\Ea(\yA), u \b\> 
    =  \eps^{1/p} \b[2 \cdot 4^{-1/p'} \b|\phi_2'(2\mA)\b|\b]. \qedhere
  \end{displaymath}
\end{proof}

\subsection{Discussion}
\label{sec:e1d:discussion}
This discussion of the 1D consistency error estimates largely follows
the discussions in \cite{OrtnerWang:2009a,Ortner:qnl.1d}.

We have estimated the modelling errors for two prototypical a/c
methods. We see that the leading order terms in the upper bounds are
$O(\eps)$ and $O(\eps^{1/p})$ for the QNL and QCE methods,
respectively. However, a much finer distinction should be made.

First, we note that both methods reduce to the Cauchy--Born
approximation in the continuum region, and the corresponding
contributions are all of second order (see also Remark
\ref{rem:intro:cb}; note also that this requires point symmetry of
$V$, which we have not assumed in general in this paper).

Second, we see that the QCE method (and only the QCE method) has a
zeroth-order term $G \eps^{1/p}$ in the interface region. This term
occurs since the QCE method is not patch test consistent, that is,
homogeneous deformations are not equilibria of the QCE model:
\begin{displaymath}
  \del\Eqce(\yA) \neq 0.
\end{displaymath}
The origin and effect of these ``ghost forces'' are discussed in more
detail in \cite{Shenoy:1999a, Miller:2003a, Dobson:2008c,
  Dobson:2008b, emingyang}.

We should call this term zeroth order for several reasons: Firstly, it
is clearly of zeroth order if $p = \infty$, in which case the
consistency error is related to the error in the
$\WW^{1,\infty}$-norm. Secondly, the parameter $\eps$ is a constant of
the problem and does {\em not} tend to zero. As a matter of fact, the
accuracy of an a/c method should be related the smoothness of the
solution (as opposed to the atomistic scale), and the term $G
\eps^{1/p}$ is independent of the magnitude of $y''$ in the interface
region. The scaling $\eps^{1/p}$ relates only to the {\em width} of
the interface region.

Finally, it is worth remarking on the first-order consistency term in
the interface region for the QNL method. The reason this term is of
first order as opposed to second order is the loss of symmetry that is
introduced by changing the interaction law at the interface between
the atomistic and continuum regions.  A recent result of Dobson
\cite{Dob:pre} shows that no a/c method coupling an interatomic
potential to the Cauchy--Born continuum model can achieve better than
first-order accuracy in the interface region.

Note also, that the second finite differences $y_n''$ can in fact be
written in terms of the oscillation operator:
\begin{displaymath}
  \b|y_n'' \b| = {\rm osc}\b( \D y; [x_n-\eps, x_n+\eps]\b).
\end{displaymath}

In our analysis in \S\ref{sec:2d}, we will ignore the possibility of
proving a modelling error estimate that is of second order in the
continuum region, but we will be satisfied with an estimate that is
globally of first order. 




\section{Auxiliary Results}
\label{sec:aux}

\subsection{The bond density lemma}
\label{sec:aux:bdl}
The bond density lemma is a tool that allows a transition between
integrals over bonds, and volume integrals. It was first derived in
\cite{Shapeev:2010a} for the construction of patch test consistent a/c
methods for pair potential interactions. Related asymptotic results
were used previously in $\Gamma$-convergence analyses of atomistic
models \cite{AlCi:2004}; the achievement of Shapeev
\cite{Shapeev:2010a} was to obtain a formula that is {\em exact} for
any triangle.

Before we formulate the result we introduce some notation. Let $T
\subset \R^2$ be a triangle with vertices belonging to $\Lext$. We
define the characteristic function $\chi_T : \R^2 \to \R$ by
\begin{displaymath}
  \chi_T(x) := \lim_{t \searrow 0} \frac{ \b| T \cap B_t(x) \b|}{|B_t(x)|},
\end{displaymath}
where $B_t(x)$ denotes the closed ball with centre $x$ and radius
$t$. Note that $\chi_T = 1$ in ${\rm int}(T)$, and $\chi_T = 1/2$ on
the edges of $T$. The value of $\chi_T$ on the corners is not of
importance.

Let $x, x' \in \R^2$ and let $\varphi$ be a function that is measurable on
the line segment $(x, x')$; then we define the line integral, or {\em
  bond integral},
\begin{displaymath}
  \mint_x^{x'} \varphi \db := \int_{t = 0}^1 \varphi\b( (1-t) x + t x' \b) \dt.
\end{displaymath}

\begin{lemma}[Bond Density Lemma (\cite{Shapeev:2010a}, Lemma 2)]
  \label{th:aux:bdl}
  Let $T \subset \R^2$ be a non-degenerate triangle with vertices
  belonging to $\Lext = \eps \Z^2$, and let $r \in \Z^2$; then
  \begin{equation}
    \label{eq:aux:bdl}
    \eps^2 \sum_{x \in \Lext} \mint_{x}^{x+\eps r} \chi_T \db = |T|.
  \end{equation}
\end{lemma}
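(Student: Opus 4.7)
The plan is to reduce the sum of bond-averages to a one-dimensional ``slab'' approximation to $|T|$ along lattice lines parallel to $r$, and then to recognise that approximation as an exact trapezoidal-rule evaluation whose endpoint corrections precisely cancel the factors of $1/2$ coming from the definition of $\chi_T$ on the boundary of $T$.

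First I would write $r = k r_0$ with $r_0 \in \Z^2$ primitive and $k \in \N$, and rotate coordinates so that $r_0$ is horizontal. The lines in $\R^2$ parallel to $r_0$ that meet $\Lext$ form an equidistant family $\{\ell_n\}_{n\in\Z}$ at heights $y_n = y_0 + nd$ with transversal spacing $d = \eps/|r_0|$, and the lattice points on each $\ell_n$ are spaced by $\eps|r_0|$. Each bond $(x, x+\eps r)$ with $x \in \Lext \cap \ell_n$ lies on $\ell_n$ and covers $k$ consecutive primitive segments of that line, so each point of $\ell_n$ is covered by exactly $k$ bonds. Parametrising by arclength reduces the left-hand side of \eqref{eq:aux:bdl} to
\begin{equation*}
  \eps^2 \sum_{x \in \Lext} \mint_x^{x+\eps r} \chi_T \db
  \;=\; \frac{\eps^2 k}{\eps|r|}\sum_n \int_{\ell_n} \chi_T \, d\ell
  \;=\; d \sum_n \int_{\ell_n} \chi_T \, d\ell.
\end{equation*}

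Next I would let $f(y)$ be the length of the horizontal cross-section of $T$ at height $y$, so that $\int_\R f \, dy = |T|$. The key structural fact is that because every vertex of $T$ lies in $\Lext$, every vertex-height of $T$ is of the form $y_n$, and hence $f$ is piecewise linear between consecutive grid nodes. This is precisely the condition under which the trapezoidal rule is \emph{exact}, giving
\begin{equation*}
  d \sum_n f(y_n) \;=\; \int_{y_{\min}}^{y_{\max}} f(y)\, dy \;+\; \tfrac{d}{2}\bigl(f(y_{\min}) + f(y_{\max})\bigr) \;=\; |T| + \tfrac{d}{2}\bigl(f(y_{\min}) + f(y_{\max})\bigr).
\end{equation*}

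Finally I would relate $\int_{\ell_n} \chi_T \, d\ell$ to $f(y_n)$. Generically the two agree, since the intersection of $\ell_n$ with $\pp T$ has zero one-dimensional measure; the exception is when $\ell_n$ coincides with an edge of $T$, which forces $y_n \in \{y_{\min}, y_{\max}\}$ and yields $\int_{\ell_n} \chi_T \, d\ell = f(y_n)/2$ because $\chi_T \equiv 1/2$ on that edge. Note that $f(y_{\min})$ (respectively $f(y_{\max})$) is nonzero \emph{exactly} in this situation, so the endpoint correction $\tfrac{d}{2}(f(y_{\min}) + f(y_{\max}))$ in the trapezoidal identity is cancelled precisely by the boundary contribution, leaving
\begin{equation*}
  d \sum_n \int_{\ell_n} \chi_T \, d\ell \;=\; d \sum_n f(y_n) - \tfrac{d}{2}\bigl(f(y_{\min}) + f(y_{\max})\bigr) \;=\; |T|.
\end{equation*}

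The main delicate step I expect is this final bookkeeping: one has to verify carefully that the factor $1/2$ coming from the values of $\chi_T$ on $\pp T$ coincides with the trapezoidal-rule endpoint correction in every configuration of $T$ relative to the lattice lines (whether or not some edge of $T$ happens to be parallel to $r_0$). It is exactly this matching that genuinely uses the hypothesis that the vertices of $T$ lie in $\Lext$; without it the breakpoints of $f$ need not lie on the quadrature grid, the trapezoidal rule fails to be exact, and the identity breaks down.
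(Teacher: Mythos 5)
Your argument is correct. Note that the paper itself offers no proof of this lemma --- it is imported verbatim from Shapeev \cite{Shapeev:2010a} --- so there is no in-text argument to compare against; your slicing proof stands as a self-contained derivation. All three ingredients check out: (i) the reduction of the lattice sum to $d\sum_n\int_{\ell_n}\chi_T\,d\ell$ with $d=\eps/|r_0|$ is valid because almost every point of $\ell_n$ is covered by exactly $k$ of the bonds lying on $\ell_n$ (lattice points are covered $k+1$ times, but they form a null set for arclength); (ii) the cross-section function $f$ of a triangle is affine between consecutive vertex heights, and the hypothesis that the vertices lie in $\Lext$ places those heights on the quadrature grid $\{y_n\}$, so the trapezoidal rule is exact on each cell $[y_n,y_{n+1}]$; (iii) the only lines $\ell_n$ on which $\int_{\ell_n}\chi_T\,d\ell \neq f(y_n)$ are supporting lines of $T$ containing an edge parallel to $r_0$, which forces $y_n\in\{y_{\min},y_{\max}\}$, and conversely $f(y_{\min})$ (resp.\ $f(y_{\max})$) is nonzero precisely when such an edge exists, so the boundary deficit and the trapezoidal endpoint correction cancel exactly as you claim. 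One point worth making explicit in (iii), since it is where convexity and non-degeneracy enter: a line in direction $r_0$ either meets ${\rm int}(T)$, in which case $\ell_n\cap\pp T$ consists of at most two points and contributes nothing to the arclength integral, or it is a supporting line, in which case $\ell_n\cap T$ is a single vertex or an entire edge on which $\chi_T\equiv\smfrac12$; an intermediate-height horizontal edge is impossible for a triangle. Finally, your proof tacitly assumes $r\neq 0$ (you factor out a primitive $r_0$); the statement as written admits $r=0$, where the identity reduces to $\eps^2\sum_{x}\chi_T(x)=|T|$ and follows instead from Pick's theorem together with the fact that the corner values of $\chi_T$ sum to $\smfrac12$ for a triangle --- a case that never arises in the paper since $\Rg\subset\Z^d\setminus\{0\}$, but worth a sentence for completeness.
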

%
%

As a first application of the bond density lemma we present a proof of
Lemma \ref{th:fram:Dbarvh_Dvh} in the appendix.

\begin{remark}
  In the above form, the bond density lemma is false in 3D, which is
  one of the reasons why the present work is restricted to 2D.
  Moreover, the condition that the vertices of $T$ belong to lattice
  sites is also necessary. This is related to the assumption that the
  vertices of $\Th$ belong to $\Lext$, however, this is not crucial and
  could be removed with some additional work.
\end{remark}

\subsection{Discrete divergence-free tensor fields in 2D}
\label{sec:aux:helmh}
Our second auxiliary result concerns representations of discrete
divergence-free $\PO$-tensor fields. Following the construction given
by Polthier and Preu\ss~\cite{PolPre:2003}, we will give a proof for
the periodic setting. This proof also serves to motivate a crucial
argument in \S\ref{sec:2d:Est_wh}. Since we will only use the
atomistic finite element mesh $\Teps$ from now on, we will formulate
everything in terms of this mesh. However, all results hold for
general periodic triangulations.

\paragraph{The Crouzeix--Raviart finite element space}
\label{sec:aux:CR_space}
The representation of discrete divergence-free tensor fields requires
the use of the non-conforming Crouzeix--Raviart finite element
space. Recall from \S\ref{sec:fram:disc_cts}, \ref{sec:intro:galerkin}
the definition of the sets of edges $\Edg$ and $\Edgext$. The
Crouzeix--Raviart finite element space over $\Tepsext$ is defined as
\begin{align*}
  \CR(\Tepsext) = \b\{ w : \cup_{T \in \Tepsext} {\rm int}(T) \to \R
  \bsep~& w \text{ is piecewise affine w.r.t. } \Tepsext, \text{ and }\\
  & \text{ continuous in edge midpoints } q_f, f \in \Edgext \b\}.
\end{align*}
The degrees of freedom for functions $w \in \CR(\Tepsext)$ are the
values at edge midpoints, $w(q_f)$, $f \in \Edgext$, and the
corresponding nodal basis functions are denoted by $\zeta_f$.

The Crouzeix--Raviart finite element space of periodic functions is
defined as 
\begin{displaymath}
  \CRper(\Teps) = \b\{ w \in \CR(\Tepsext) \bsep w(\xi + x) =
  w(x) \text{ for $\dx$-a.e. } x \in \R^2, \xi \in 2 \Z^2 \b\}.
\end{displaymath}
The periodic nodal basis functions are defined, for $f \in \Edg$, by
$\zeta_f^\per = \sum_{\xi \in 2 \Z^2} \zeta_{\xi+f}$.

\paragraph{Path integrals}
\label{sec:aux:pathints}
For two edges $f, f' \in \Edgext$, let $\Gamma_{f,f'}$ denote the set
of all piecewise affine paths from $q_f$ to $q_{f'}$, crossing element
edges only in edge midpoints; see Figure \ref{fig:cr_paths}(a) for an
example.

\begin{figure}
  \begin{center}
    \includegraphics[height=4cm]{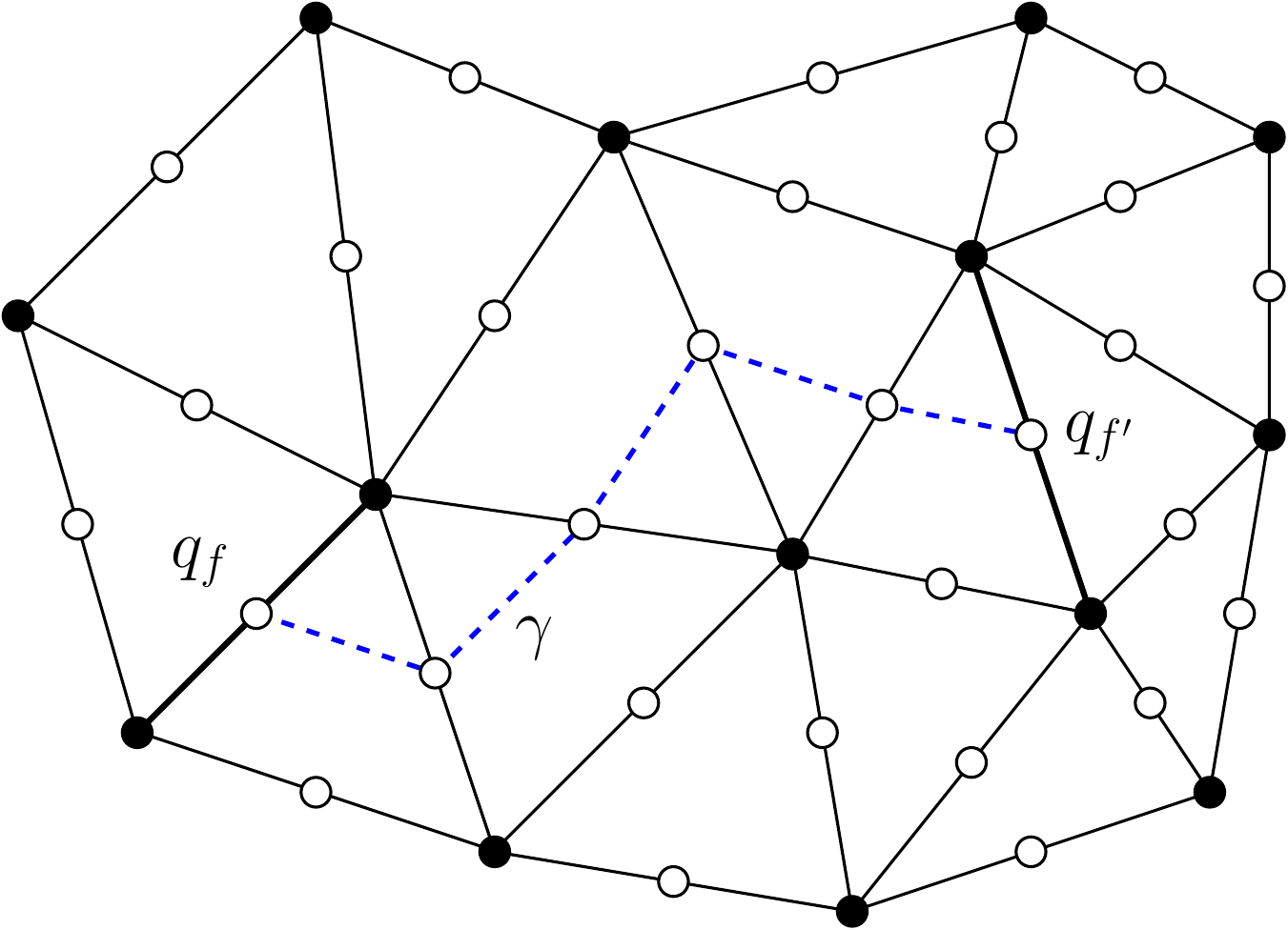}
    \quad \qquad  \includegraphics[height=4cm]{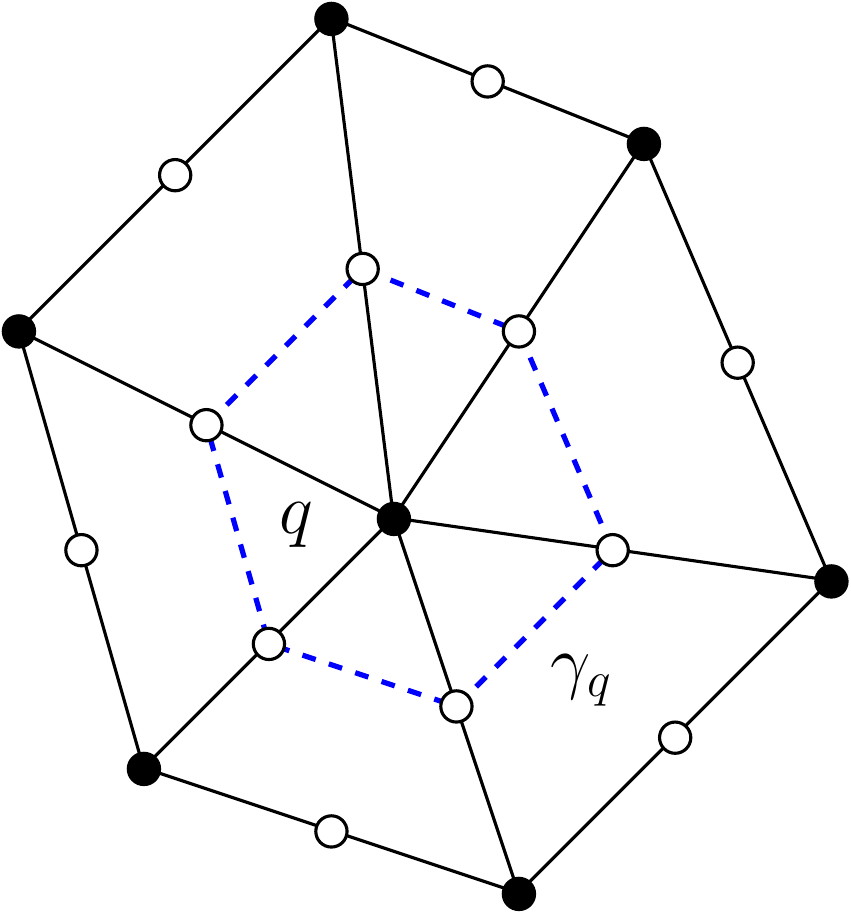}
    \\[-2mm]
    {\footnotesize \hspace{1cm} (a) \hspace{5cm} (b) }
  \end{center}
  \caption{\label{fig:cr_paths} (a) Illustration of a piecewise affine
    path $\gamma \in \Gamma_{f, f'}$. (b) Illustration of the path
    $\gamma_q$ used in the proof of Lemma \ref{th:aux:divfree}.}
\end{figure}

For any piecewise constant vector field $\sigma \in
\PO(\Tepsext)^2$ and for any path $\gamma \in \Gamma_{f,f'}$, $\gamma
= \{ x(t) \sep 0 \leq t \leq T \}$, we denote the standard path
integral by
\begin{displaymath}
  \int_{\gamma} \sigma \cdot \dx = \int_{t = 0}^T \sigma \cdot
  \dot{x}(t) \dt.
\end{displaymath}
For piecewise constant tensor fields $\sigma \in
\PO(\Tepsext)^{k \times 2}$ we define the path integral as
\begin{displaymath}
  \int_{\gamma} \sigma \cdot \dx = \int_{t = 0}^T \sigma
  \dot{x}(t) \dt.
\end{displaymath}

Since functions $w \in \CR(\Tepsext)^k$ have piecewise constant
gradients $\D w$, and since they are continuous in edge midpoints,
it is easy to see that
\begin{equation}
  \label{eq:aux:pathint_Dw}
  \int_\gamma \D w \cdot \dx = w(q_{f'}) - w(q_f) \qquad
  \forall \gamma \in \Gamma_{f,f'}.
\end{equation}

\paragraph{Discrete divergence-free tensor fields}
The following lemma characterizes discrete divergence-free tensor
fields.

\begin{lemma}
  \label{th:aux:divfree}
  A tensor field $\sigma \in \POper(\Teps)^{k \times 2}$ satisfies
  \begin{displaymath}
    \int_\Om \sigma : \D u \dx = 0 \qquad \forall u \in \PIper(\Teps)^{k}
  \end{displaymath}
  if and only if there exist a constant $\sigma_0 \in \R^{k \times 2}$
  and a function $w \in \CRper(\Teps)^k$ such that
  \begin{displaymath}
    \sigma = \sigma_0 + \D w \mJ,
    \qquad \text{where} \quad \mJ = \mymat{0 & -1 \\ 1 & 0} \in {\rm SO}(2).
  \end{displaymath}
\end{lemma}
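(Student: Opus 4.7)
The plan is to adapt the Polthier--Preu\ss{} construction \cite{PolPre:2003} to the 2-torus, with the constant $\sigma_0$ absorbing the two periodic holonomy classes that obstruct a globally defined CR potential.

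The central ingredient is a geometric identity in each triangle $T$ with vertices $q, a, b$ in counter-clockwise order: writing $m_a := (q+a)/2$ and $m_b := (q+b)/2$, a direct computation based on $|\D\phi_q|_T| = |b-a|/(2|T|)$ and the fact that the gradient direction is the inward unit normal to $ab$ gives
\begin{displaymath}
  |T|\,\D\phi_q|_T \;=\; \mJ(m_b - m_a),
\end{displaymath}
where $\phi_q$ is the standard nodal basis at the vertex $q$. Combined with $\mJ^2 = -I$ and $\mJ^\t \mJ = I$, this identity is the bridge between line integrals along segments connecting midpoints and volume integrals of $\D\phi_q$.

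For the sufficiency direction, substituting $\sigma = \sigma_0 + \D w\,\mJ$ with $w \in \CRper(\Teps)^k$ and testing against $u \in \PIper(\Teps)^k$, the $\sigma_0$-part drops out because $\int_\Om \D u\,dx = 0$ by periodicity. Working componentwise in $w$ and in $u = \phi_q e_i$, the element contribution $|T|\,\D w_T\,\mJ \cdot \D\phi_q|_T$ equals $-(w(m_b^T) - w(m_a^T))$ by the identity above and \eqref{eq:aux:pathint_Dw}; summing over triangles incident to $q$ gives a telescoping cyclic sum that vanishes because $w$ is single-valued at midpoints.

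For the necessity direction, I first fix a basis $\{\xi_1, \xi_2\}$ of $2\Z^2$ and a reference midpoint $q_{f_0}$, and choose $\sigma_0 \in \R^{k \times 2}$ uniquely so that $\sigma_0\mJ\xi_i = \int_{\gamma_i}\sigma\mJ \cdot dx$ for some $\gamma_i \in \Gamma_{f_0, f_0 + \xi_i}$; this is well-defined since $\{\mJ\xi_1, \mJ\xi_2\}$ spans $\R^2$. I then set $w(q_{f_0}) := 0$ and $w(q_{f'}) := \int_\gamma (\sigma - \sigma_0)\mJ^\t \cdot dx$ for any $\gamma \in \Gamma_{f_0, f'}$. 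The main step is proving well-definedness, i.e., that $\oint_\gamma (\sigma - \sigma_0)\mJ^\t \cdot dx = 0$ for every closed $\gamma$ through midpoints. Boundaries of individual triangles contribute zero trivially because $\sigma$ is piecewise constant, the two torus generators contribute zero by construction of $\sigma_0$, and the remaining class, a counter-clockwise loop $\gamma_q$ around a single vertex $q$, gives
\begin{displaymath}
  \oint_{\gamma_q}(\sigma - \sigma_0)\mJ^\t \cdot dx \;=\; -\sum_{T \ni q}|T|(\sigma_T - \sigma_0)\D\phi_q|_T
\end{displaymath}
(again by the geometric identity); this vanishes because $\int_\Om \D\phi_q\,dx = 0$ by periodicity and the divergence-free hypothesis applied componentwise to $\phi_q e_i$ forces $\sum_T |T|\sigma_T \D\phi_q|_T = 0$. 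Finally, $w$ extends to $\CRper(\Teps)^k$ by CR nodal interpolation, periodicity is precisely the condition encoded in $\sigma_0$, and applying \eqref{eq:aux:pathint_Dw} to segments between two midpoints of edges of a single triangle $T$ gives $\D w_T (q_{f'} - q_f) = (\sigma_T - \sigma_0)\mJ^\t (q_{f'} - q_f)$ for two linearly independent difference vectors, forcing $\D w_T = (\sigma_T - \sigma_0)\mJ^\t$; right-multiplication by $\mJ$ yields $\D w_T\,\mJ = \sigma_T - \sigma_0$ via $\mJ^\t \mJ = I$. The principal conceptual obstacle is the global topology of the torus; once handled by $\sigma_0$, the rest is the classical discrete stream-function argument in each simply-connected patch.
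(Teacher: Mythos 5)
Your proof is correct and follows essentially the same route as the paper's (both are the Polthier--Preu\ss{} discrete stream-function construction: define $w$ at edge midpoints by path integrals of the rotated field, reduce path-independence to loops around single vertices, and identify the vertex-loop integral with the divergence-free condition tested against the nodal hat function). The only organizational difference is that you fix the constant $\sigma_0$ up front by killing the holonomy around the two torus generators and build a periodic $w$ directly, whereas the paper constructs $w$ on all of $\R^2$ and extracts $\sigma_0$ a posteriori from the affine part of $w$; your explicit identity $|T|\,\D\phi_q|_T=\mJ(m_b-m_a)$ is a clean way to make precise what the paper calls a ``fairly straightforward calculation.''
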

\begin{proof}[Sketch of the proof]
  The reverse direction, that any tensor field of the form $\sigma =
  \sigma_0 + \D w \mJ$ has zero discrete divergence, can be checked
  using a straightforward calculation, using
  \eqref{eq:aux:pathint_Dw}.

  {\it Step 0. Outline: } 
  To simplify the notation, we define $\alpha = \sigma \mJ^T$ and,
  without loss of generality, we assume that $k = 1$.  Initially, we
  treat $\alpha$ as a piecewise constant tensor field on all of
  $\R^2$, ignoring periodicity. We construct $w \in \CR(\Tepsext)$ by
  explicitly specifying $w(q_f)$ for all edges $f \in \Edgext$. We
  will then show in the last step of the proof that $w$ can be
  written as the sum of an affine function and a periodic function.

  {\it Step 1. Construction of $w$: } 
  Fix a starting edge $\hat{f} \in \Edg$ and define $w(q_{\hat{f}}) =
  0$. For any edge $f \in \Edgext$ let $\gamma \in \Gamma_{\hat{f},
    f}$ and define $w(q_f)$ via the path integral
  \begin{displaymath}
    w(q_f) := \int_\gamma \alpha \cdot \dx.
  \end{displaymath}
  We need to show that this definition is independent of the
  path.
  
  Let $q$ be a vertex of the triangulation $\Tepsext$ and let
  $\varphi_q$ be the corresponding nodal basis function with support
  $\omega_q$; then a fairly straightforward calculation (see, e.g.,
  \cite{PolPre:2003} for the details) shows that
  \begin{align*}
    0 = \int_{\omega_q} \sigma \cdot \D \varphi_q \dx 
    = \frac12 \sum_{\substack{f \in \Edgext \\ f \subset {\rm
          int}(\omega_q)}} \B( \b(\sigma \cdot \nu_f\b)^+ + \b(\sigma
    \cdot \nu_f\b)^- \B) 
    = \int_{\gamma_q} \alpha \cdot \dx,
 \end{align*}
 where $\nu_f^\pm$ are the two unit normals to $f$, and $\gamma_q$ is
 the piecewise affine path through edge midpoints circling $q$; cf.
 Figure \ref{fig:cr_paths}(b). Note that the rotation $\mJ$ in the
 definition of $\alpha$ comes from the fact that tangent vectors are
 rotated normal vectors.

 Since all closed piecewise affine paths can be written as a sum over
 paths $\gamma_q$, this implies that $\int_\gamma \alpha \cdot \dx =
 0$ for all closed piecewise affine paths $\gamma$, and in particular
 that the definition of $w(q_f)$ is independent of the choice of path,
 that is, $w$ is well-defined.

 {\it Step 2. $\D w = \alpha$: } 
 From the definition of $w(q_f)$, $f \in \Edgext$, it follows that,
 for $f, f' \subset T \in \Tepsext$,
 \begin{displaymath}
   \nabla w(T) \cdot (q_f - q_{f'}) = w(q_f) - w(q_{f'}) =
   \alpha(T) \cdot (q_f - q_{f'}),
 \end{displaymath}
 which immediately implies that $\D w(T) = \alpha(T)$.

 {\it Step 3. Periodicity: }
 We are only left to show that $w(x) = a \cdot x + w_1(x)$ for some $a
 \in \R^2$, $a = (a_1,a_2)$, and $w_1 \in \CRper(\Teps)$. Let $a_j =
 w(q_{\hat{f}} + 2 e_j)$, $j = 1,2$, and define $w_1(x) = w(x) - a
 \cdot x$. Fix $j \in \{1,2\}$, let $f \in \Edgext$ and let $\gamma$
 be a path from $q_{\hat{f}}$ to $q_f$. Since $\alpha$ is
 $2\Z^2$-periodic, and since $w_1(q_{\hat{f}}+2 e_j) =
 w_1(q_{\hat{f}}) = 0$, we have
 \begin{displaymath}
   w_1(q_{f}+2e_j) = \int_{\gamma + 2e_j} \alpha \cdot \dx
   = \int_\gamma \alpha \cdot \dx = w_1(q_f).
 \end{displaymath}
 This shows that $w_1$ is periodic and thus concludes the proof. 
\end{proof}



\section{A General Consistency Result in 2D}
\label{sec:2d}
We are now finally in a position to make precise the statement that
{\em patch test consistent a/c methods are first-order
  consistent}. Motivated by the example of the QNL consistency result
(ignoring, as discussed in \S\ref{sec:e1d:discussion}, the
second-order consistency of the Cauchy--Born approximation), we would
like to prove a result of the form
\begin{displaymath}
  \b\| \del\Eac(y) - \del\Ea(y) \b\|_{\WW^{-1,p}_\eps}
  \leq C \eps \b\| \D^2 y \b\|_{\LL^p(\Omc \cup \Omi)}.
\end{displaymath}
As explained in \S\ref{sec:fram:osc} we will use the oscillation
operator \eqref{eq:fram:defn_osc} to replace the undefined second
derivative.

For each element $T \in \Tepsext$, let $\omega_T^\a$ be the {\em
  interaction neighbourhood} of $T$ in the atomistic model,
\begin{equation}
  \label{eq:2d:defn_intnhd_a}
  \omega_T^\a := \b\{ x + t_1 r_1 + t_2 r_2 \bsep
  x \in T, t_i \in [0, 1], r_i \in \Rg \b\},
\end{equation}
and $\omega_T$ its union with the set $\omega_T^\c$, restricted to the
continuum and interface regions,
\begin{equation}
  \label{eq:2d:defn_intnhd}
  \omega_T := \bg( \omega_T^\a \cup \bigcup_{\substack{T' \in \Tepsext
      \\ T \cap T' \neq \emptyset}} T' \bg)
  \setminus \Oma^\per.
\end{equation}
Note that assumption \eqref{eq:intro:noint_La_Omc} implies that
$\omega_T^\a \cup \omega_T^\c \subset \omega_T$ for all $T \in
\Tepsc$.

\begin{theorem}[First-order Consistency of Patch Test Consistent a/c
  Methods]
  \label{th:2d:main_result}
  \quad Suppose that $\Eac$ is patch test consistent
  (\S\ref{sec:intro:patch_test}) and globally energy consistent
  (\S\ref{sec:intro:qce}), that the locality condition
  \eqref{eq:intro:Fi_loc} and the scaling condition
  \eqref{eq:intro:Fi_scal} hold, that ${\rm int}(\Oma)$ is connected,
  that $\Tha\cup\Thi \subset \Teps$, and that
  \eqref{eq:intro:noint_La_Omc} holds. Then, for any $y \in \Ys,$ we
  have
  \begin{equation}
    \label{eq:2d:main_result}
    \b\| \del\Eac(y) - \del\Ea(y) \b\|_{\WW^{-1,p}_\eps}
    \leq \eps \bg\{ \sum_{T \in \Tepsc \cup \Tepsi} 
    |T| \B[ M_T \, {\rm osc}(\D y; \omega_T) \B]^p \bg\}^{1/p},
  \end{equation}
  where the oscillation measure ${\rm osc}$ is defined in
  \eqref{eq:fram:defn_osc}, the interaction neighbourhood $\omega_T$
  is defined in \eqref{eq:2d:defn_intnhd}, and the prefactors $M_T$
  are defined as follows:
  \begin{equation}
    \label{eq:2d:defn_barM}
    M_T = \cases{ 
      0, & T \in \Tepsa, \\
      \b(M^\i + M^\a\b) \b(1 + 7\, {\rm width}(\Omi)\b), & T
      \in \Tepsi, \\
      M^\a + 7 \b(M^\i + M^\a\b) {\rm width}(\Omi),
      & T \in \Tepsc.}
  \end{equation}
  In \eqref{eq:2d:defn_barM}, the constants $M^\a$ and $M^\i$ are
  defined in \eqref{eq:intro:defn_Ma} and \eqref{eq:intro:defn_Mi},
  and the ``interface width'' ${\rm width}(\Omi)$ is given by
  \begin{equation}
    \label{eq:2d:defn_widthOmi}
    {\rm width}(\Omi) := \max_{\substack{f \in \Edg \\ f \subset
        \Omi^\per}} \min_{\substack{f' \in \Edgext \\ f' \subset \Oma^\per}}
    \min_{\gamma \in \Gamma_{f,f'}} \, \frac{{\rm length}(\gamma)}{\eps}.
  \end{equation}
\end{theorem}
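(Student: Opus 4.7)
The plan is to construct, in the spirit of the virial stress, piecewise constant stress tensors $\sigma_\a, \sigma_{\ac} \in \POper(\Teps)^{d \times d}$ that represent the two first variations as volume integrals,
\begin{displaymath}
\b\<\del\Ea(y), v\b\> = \int_\Om \sigma_\a : \D v \dx, \qquad
\b\<\del\Eac(y), v\b\> = \int_\Om \sigma_{\ac} : \D v \dx, \qquad v \in \Us.
\end{displaymath}
Starting from $\b\<\del\Ea(y), v\b\> = \eps^d \sum_{x, r} \pp_r V(\Da\Rg y(x)) \cdot \Da{r} v(x)$ and using $\Da{r} v(x) = \mint_x^{x+\eps r} \D v \cdot r \,\db$ for $v \in \Us$, redistributing each bond's contribution to the triangles it crosses through the weights $\mint_x^{x+\eps r} \chi_T \,\db$ yields a valid $\sigma_\a$, with Lemma~\ref{th:aux:bdl} providing the needed consistency. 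Define $\sigma_{\ac}$ analogously as the sum of three pieces contributed by $\int_{\Omc} W(\D y)$, $\eps^d\sum_{\La} V$, and $\Ei$; the locality~\eqref{eq:intro:Fi_loc} and scaling~\eqref{eq:intro:Fi_scal} conditions are tailored precisely so the interface piece has the same bond-based structure, with Lipschitz constants $M_{r,s}^\i$. Set $\tau := \sigma_{\ac} - \sigma_\a \in \POper(\Teps)^{d \times d}$, noting $\tau \equiv 0$ on triangles $T \in \Tepsa$ whose entire atomistic interaction neighbourhood lies in $\Oma^\per$.

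The second step uses the hypotheses on a homogeneous deformation $y_\mF$. Patch test consistency combined with Proposition~\ref{th:intro:a_patchtest} gives $\b\<\del\Eac(y_\mF) - \del\Ea(y_\mF), u_h\b\> = 0$ for all $u_h \in \Ush$ and every $\mF$; I would extend this to all $v \in \Us$ by noting that the integrand depends on $v$ only through $v|_{\Oma \cup \Omi}$ --- the continuum term reduces, via periodicity of $v$ and $\int_\Om \D v = 0$, to $-\pp W(\mF):\int_{\Oma \cup \Omi} \D v\,\dx$, and the assumption $\Tha \cup \Thi \subset \Teps$ means that $\Us$ and $\Ush$ have the same restriction to $\Oma \cup \Omi$. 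Then Lemma~\ref{th:aux:divfree} yields
\begin{displaymath}
\tau(y_\mF) = \tau_0(\mF) + \D w_\mF \, \mJ, \qquad w_\mF \in \CRper(\Teps)^d,
\end{displaymath}
and global energy consistency, differentiated in $\mF$, forces $\int_\Om \tau(y_\mF) \dx = 0$, which pins down $\tau_0(\mF)$.

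The main and hardest step is the analysis for general $y \in \Ys$. Mimicking the proof of Lemma~\ref{th:aux:divfree}, I would construct $\hat w \in \CRper(\Teps)^d$ by fixing a reference edge $\hat f \subset \Oma^\per$ and setting $\hat w(q_f) := \int_{\gamma_f} \tau(y) \,\mJ^T \cdot \dx$ for each $f \in \Edgext$, where $\gamma_f \in \Gamma_{\hat f, f}$ is chosen to stay in $\Oma^\per$ as long as possible, crossing $\Omi^\per$ with total length bounded by ${\rm width}(\Omi)$ of~\eqref{eq:2d:defn_widthOmi} before reaching $q_f$. Connectedness of $\Oma$ makes the choice of $\hat f$ globally sensible, $\tau \equiv 0$ along the atomistic part of $\gamma_f$, and periodicity of $\hat w$ modulo an affine function follows from Step 3 of the proof of Lemma~\ref{th:aux:divfree} together with the divergence-free identity of the previous step. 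Set $R := \tau(y) - \D \hat w \, \mJ \in \POper(\Teps)^{d\times d}$. The crux is the pointwise estimate
\begin{displaymath}
|R(T)| \leq \eps\, M_T\, {\rm osc}(\D y; \omega_T) \qquad \forall\, T \in \Tepsc \cup \Tepsi,
\end{displaymath}
which I would establish by comparing $\tau(y)$ near each $T$ with the divergence-free field $\tau(y_{\mF_T})$, $\mF_T := \D y(T)$: the in-triangle comparison uses the Lipschitz bounds~\eqref{eq:intro:LipV} and~\eqref{eq:intro:Fi_scal} to gain a factor $\eps\,{\rm osc}(\D y;\omega_T)$, accounting for the $M^\a$ term (and $M^\a+M^\i$ for $T\in\Tepsi$) in~\eqref{eq:2d:defn_barM}, while the accumulated error along the portion of $\gamma_f$ crossing $\Omi^\per$ contributes the additional $7(M^\a+M^\i)\,{\rm width}(\Omi)$ prefactor. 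Finally, since $\int_\Om \D\hat w \, \mJ : \D v \dx = 0$ for $v \in \Us$ by Lemma~\ref{th:aux:divfree},
\begin{displaymath}
\b|\b\<\del\Eac(y) - \del\Ea(y), v\b\>\b| = \b|\int_\Om R : \D v \dx\b| \leq \bg(\sum_T |T| |R(T)|^p\bg)^{1/p} \|\D v\|_{\LL^{p'}(\Om)},
\end{displaymath}
and taking the supremum over $v \in \Us$ with $\|\D v\|_{\LL^{p'}} = 1$ yields~\eqref{eq:2d:main_result}. The main obstacle is precisely the pointwise bound on $R$: one must coordinate the path choice $\gamma_f$, the vanishing of $\tau$ on $\Oma$, and the bond-level Lipschitz estimates so that every contribution picks up exactly one factor of $\eps$ paired with a local oscillation, with the correct accumulated geometric width matching the prefactor $M_T$.
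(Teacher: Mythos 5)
Your overall architecture matches the paper's: bond-integral stress representations $\Sa$, $\Sac$ of the two first variations, the observation that patch test consistency plus Lemma \ref{th:aux:divfree} gives $\Sac(\yF)-\Sa(\yF) = \D\w(\mF)\mJ$ with $\w(\mF)\in\CRper(\Teps)^2$, global energy consistency pinning down the constant, and connectedness of ${\rm int}(\Oma)$ normalising $\w(\mF)=0$ on $\Oma$. The extension of the patch test from $\Ush$ to $\Us$ and the final duality step are also as in the paper. The gap is in the step you yourself identify as the crux: the construction of the corrector for a \emph{general} $y\in\Ys$. You propose $\hat w(q_f):=\int_{\gamma_f}\tau(y)\,\mJ^T\cdot\dx$ with $\tau(y)=\Sac(y)-\Sa(y)$. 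But for general $y$ the field $\tau(y)$ is \emph{not} discrete divergence-free (that is exactly the modelling error you are trying to estimate), so these path integrals are path-dependent: $\hat w$ is not canonically defined, its periodicity does not follow from Step 3 of Lemma \ref{th:aux:divfree} (which uses path-independence), and --- most importantly --- the defect $\D\hat w(T)\mJ - \tau(y;T)$ is a sum of loop integrals of $\tau(y)$ over the region enclosed between the paths reaching the three edge midpoints of $T$. Each such loop integral is controlled only by the oscillation of $\D y$ \emph{at the location of that loop}, so the resulting bound on $R(T)$ accumulates ${\rm osc}(\D y;\omega_{T'})$ over all $T'$ along paths from $\Oma$ to $T$, not the single local quantity ${\rm osc}(\D y;\omega_T)$ appearing in \eqref{eq:2d:main_result}; the count of enclosed loops (up to $({\rm width}(\Omi)/\eps)$ or worse) also threatens the factor of $\eps$.

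The paper avoids this by never path-integrating the non-divergence-free field. Instead it sets $\modw(y;q_f) := \w(\mF_f(y);q_f)$, i.e.\ it evaluates the \emph{homogeneous} correctors at a locally averaged gradient $\mF_f(y)$, and defines $\Sacm(y)=\Sac(y)-\D\modw(y)\mJ$ (\S\ref{sec:2d:defn_Sacm}). Then $\D\modw(y;T)-\D\w(\D y(T);T)$ is controlled by $|\w(\mF_f(y);q_f)-\w(\D y(T);q_f)|$, and the only path integral needed is the one in Lemma \ref{th:2d:Lipest_wh} establishing that $\mF\mapsto\w(\mF;q_f)$ is Lipschitz with constant $\eps(M^\a+M^\i)\,{\rm width}(\Omi)$; there the integrand is bounded uniformly by $(M^\a+M^\i)|\mF-\mG|$ along the whole path, so no remote oscillations enter, and the local factor $|\mF_f(y)-\D y(T)|\leq\eps\,{\rm osc}(\D y;\omega_T)$ supplies both the $\eps$ and the locality. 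To repair your argument you would need to replace your direct construction of $\hat w$ by this freezing-of-coefficients device (or prove that your loop defects telescope into purely local quantities, which is not evident).
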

\begin{proof}[Outline of the proof]
  We will construct ``stress functions'' $\Sa(y), \Sac(y) \in
  \PO(\Teps)^{2 \times 2}$ such that
  \begin{displaymath}
    \b\< \del\Ea(y), u \b\> = \int_\Om \Sa(y) : \D u \dx, \quad
    \text{and} \quad
    \b\< \del\Eac(y); u \b\> = \int_\Om \Sac(y) : \D u \dx
    \qquad \forall u \in \Us.
  \end{displaymath}
  If we could prove an estimate of the form
  \begin{displaymath}
    \b|\Sa(y; T) - \Sac(y; T)\b| \lesssim \eps\, {\rm osc}(\D y; \omega_T),
  \end{displaymath}
  then the result would follow immediately. It turns out that this is
  not possible.

  Instead, we will use the fact that $\Eac$ is globally energy consistent and
  patch test consistent to construct a correction
  (cf. Corollary~\ref{th:2d:econs_cor}, Lemma \ref{th:2d:cons_of_pt},
  and \S\ref{sec:2d:defn_Sacm})
  \begin{displaymath}
    \Sacm(y) = \Sac(y) - \D \modw(y) \mJ,
  \end{displaymath}
  which still represents the first variation $\del\Eac$
  (cf. Lemma~\ref{th:aux:divfree}), and which has the property that
  \begin{displaymath}
    \Sa(\yF; T) = \Sacm(\yF; T) = \pp W(\mF) \qquad \forall\, T \in
    \Tepsc \cup \Tepsi, \quad \mF \in \R^{2 \times 2}.
  \end{displaymath}
  In addition, we show that $\Sacm(y; T) = \Sa(y; T)$ for all $T \in
  \Tepsa$.

  Lipschitz estimates for $\Sa$ and $\Sacm$, and careful modifications
  of the argument in the interface region, yield the following result
  (cf. Lemma \ref{th:2d:main_estimate}):
  \begin{align*}
    \b| \Sacm(y; T) - \Sa(y; T) \b| \leq~& \b| \Sacm(y; T) - \pp
    W(\D y(T)) \b| + \b| \Sa(y; T) - \pp W(\D y(T)) \b| \\
    \leq~& \eps M_T {\rm osc}(\D
    y ; \omega_T) \qquad \forall T \in \Teps,
  \end{align*}
  and, in particular,
  \begin{align*}
    \b\< \del\Eac(y) - \del\Ea(y); u \b\> =~&
    \sum_{T \in \Teps} |T| \B[ \Sacm(y; T) - \Sa(y; T) \B] : \D u(T)
    \\
    \leq~& \eps \bg\{ \sum_{T \in \Tepsc \cup \Tepsi} 
    |T| \B[ M_T \, {\rm osc}(\D y; \omega_T) \B]^p \bg\}^{1/p}
    \, \b\| \D u \|_{\LL^{p'}(\Om)},
  \end{align*}
  which yields the stated first order consistency estimate.
\end{proof}

\begin{remark}
  In 1D, a similar result can be proven using a similar framework but
  with significantly reduced technicalities. Note, in particular, that
  the 1D analogue of Lemma \ref{th:aux:divfree} is 
  \begin{displaymath}
    \int_\Om \sigma \cdot \D u \dx = 0 \quad \forall \, u \in \Us
    \qquad \text{if and only if} \qquad  \sigma \text{ is constant}.
  \end{displaymath}
  Hence, the corrector function $\w(\mF, \cdot)$ defined in
  \S\ref{sec:2d:cons_pt} is always identically equal to zero, which
  removes the interface width dependence from the modelling error
  (cf. \S\ref{sec:2d:Est_wh}). Hence, if $d = 1$, we obtain
  \eqref{eq:2d:main_result} again but with modified prefactors
  \begin{equation}
    \label{eq:2d:1d_result}
    M_T^{\rm 1D} := \cases{
      0, & T \in \Tepsa, \\
      M^\i + M^\a, & T \in \Tepsi, \\
      M^\a, & T \in \Tepsc.
    }
  \end{equation}
  Moreover, since $\w \equiv 0$ in 1D, and since symmetries are more
  easily exploited, it is not too difficult in 1D to prove second
  order consistency in the continuum region.
\end{remark}

\subsection{The atomistic stress function}
\label{sec:2d:Sa}
A natural ``weak'' representation of $\del\Ea(y)$, $y \in \Ys$, is
given by
\begin{equation}
  \label{eq:2d:delEa_v1}
  \< \del\Ea(y), z \> = \eps^2 \sum_{x \in \L} \sum_{r \in \Rg} \pp_r
  V(\Da{\Rg}) \cdot \Da{r} z(x), \qquad \text{for } z \in \Ys.
\end{equation}
Using bond integrals we rewrite this in a form that will be useful for
our subsequent analysis. We will then use the bond density lemma
whenever we need to transition between bond integrals and volume
integrals. This process yields a notion of stress for atomistic
models, which is related to the virial stress (see \cite{AdTa:2010}
for a recent reference; this connection will be discussed in detail
elsewhere). A variant of this result for pair interactions in 1D was
developed in \cite{MakrOrtSul:qcf.nonlin}.

\begin{proposition}
  \label{th:2d:Sa}
  Let $y, z \in \Ys$, then
  \begin{displaymath}
    \< \del\Ea(y), z \> = \sum_{T \in \Teps} |T| \, \Sa(y; T) : \D z(T)
    = \int_\Om \Sa(y) : \D z \dx,
  \end{displaymath}
  where the stress function $\Sa(y) \in \POper(\Teps)^{2 \times 2}$ is
  defined as follows:
  \begin{equation}
    \label{eq:2d:defn_Sa}
    \Sa(y; T) := \sum_{r \in \Rg} \frac{\eps^2}{|T|}
    \sum_{x \in \Lext} \B[\pp_r V\b(\Da{\Rg} y(x)\b) \otimes r \B] \mint_{x}^{x+\eps
      r} \chi_{T} \db.
  \end{equation}
\end{proposition}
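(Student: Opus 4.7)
My plan is to start from the pointwise formula \eqref{eq:2d:delEa_v1} and convert every finite difference of $z$ into a line integral of $\D z$, so that the inner product $\pp_r V \cdot \Da{r} z(x)$ becomes a contraction of a tensor with $\D z$ that can then be localized to elements of $\Teps^\per$.

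First, I observe that $z \in \Ys$ is identified with its continuous piecewise affine interpolant on $\Tepsext$, so $\D z$ exists almost everywhere. The fundamental theorem of calculus gives
\begin{equation*}
  \Da{r} z(x) = \frac{z(x+\eps r) - z(x)}{\eps} = \int_0^1 \D z\b(x + t\eps r\b) \, r \, \dt = \mint_x^{x+\eps r} \D z \, r \, \db,
\end{equation*}
and the elementary identity $a \cdot (\mM r) = (a \otimes r) : \mM$ for $a, r \in \R^d$, $\mM \in \R^{d\times d}$ converts each summand of \eqref{eq:2d:delEa_v1} into
\begin{equation*}
  \pp_r V\b(\Da{\Rg}y(x)\b) \cdot \Da{r} z(x)
  = \mint_x^{x+\eps r} \b[\pp_r V\b(\Da{\Rg} y(x)\b) \otimes r\b] : \D z \, \db.
\end{equation*}
Next, since $\D z$ is piecewise constant on $\Teps^\per$, I expand $\D z = \sum_{T \in \Teps^\per} \chi_T\, \D z(T)$ (the values on edges being accounted for by the convention $\chi_T = 1/2$ there). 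Substituting and swapping the (finite, for each fixed $x,r$) sums, I obtain
\begin{equation*}
  \< \del\Ea(y), z\> = \sum_{T \in \Teps^\per} \D z(T) : \bg\{
    \sum_{r \in \Rg} \eps^2 \sum_{x \in \L}
    \b[\pp_r V\b(\Da{\Rg}y(x)\b) \otimes r\b]\,
    \mint_x^{x+\eps r} \chi_T \db \bg\}.
\end{equation*}

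The remaining step is to fold the outer sum back onto the fundamental cell $\Teps$ while simultaneously extending the inner sum from $\L$ to $\Lext$. For each $T \in \Teps^\per$, write uniquely $T = T_0 + \xi$ with $T_0 \in \Teps$ and $\xi \in 2\Z^d$. Since $z - \yA \in \Us$ and $y - \yB \in \Us$ for some $\mA, \mB$, both $\D z$ and $\Da{\Rg} y$ are $2\Z^d$-periodic, so $\D z(T) = \D z(T_0)$ and $\pp_r V(\Da{\Rg} y(x))$ depends only on $x \bmod 2\Z^d$. Using $\chi_{T_0+\xi}(\cdot) = \chi_{T_0}(\cdot - \xi)$ and the substitution $x' = x - \xi$, together with the fact that $\{\L - \xi : \xi \in 2\Z^d\}$ partitions $\Lext$, collapses the $\xi$-sum and the $x$-sum into a single sum over $x' \in \Lext$:
\begin{equation*}
  \< \del\Ea(y), z\> = \sum_{T_0 \in \Teps} \D z(T_0) : \bg\{
    \sum_{r \in \Rg} \eps^2 \sum_{x' \in \Lext}
    \b[\pp_r V\b(\Da{\Rg}y(x')\b) \otimes r\b]\,
    \mint_{x'}^{x'+\eps r} \chi_{T_0} \db \bg\}.
\end{equation*}
The term in braces is precisely $|T_0|\, \Sa(y; T_0)$ by \eqref{eq:2d:defn_Sa}, and since both $\Sa(y)$ and $\D z$ are piecewise constant on $\Teps$, the discrete sum equals $\int_\Om \Sa(y) : \D z \dx$.

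The only delicate points are bookkeeping: ensuring that the bond integral $\mint_x^{x+\eps r} \chi_T \db$ is well defined (which follows from the $\chi_T = 1/2$ convention on edges, so that $\sum_{T \in \Tepsext} \chi_T \equiv 1$ off a null set), and verifying the periodicity/unfolding step rigorously. No use of the bond density lemma itself is required here; it will only enter later when one converts these bond sums into volume integrals to compare $\Sa$ with $\pp W$. I expect this unfolding step to be the main place where one must be careful, but it amounts to the standard change of variables under discrete $2\Z^d$-periodicity and does not present a conceptual obstacle.
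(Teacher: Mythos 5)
Your proposal is correct and follows essentially the same route as the paper's proof: convert $\Da{r}z(x)$ into a bond integral of $\Dc{r}z = (\D z)r$, apply the identity $a\cdot(\mG r)=(a\otimes r):\mG$, localize via the (periodized) characteristic functions $\chi_T$, and finally use the $2\Z^d$-periodicity of $\Da{\Rg}y$ and $\D z$ to trade the sum over periodic copies of elements for a sum over $x\in\Lext$. The only cosmetic difference is that you fold $\Teps^\per$ back onto $\Teps$ at the end, whereas the paper works with $\chi_{T^\per}$ from the start and unfolds the lattice sum at the end; these are the same manipulation, and you correctly observe that the bond density lemma is not needed here.
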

\begin{proof}
  For the sake of brevity we will write $V_{x,r} = \pp_r V(\Da{\Rg}
  y(x))$.

  Recall that $T^\per = \bigcup_{\xi \in 2 \Z^2} (\xi+T)$, for $T \in
  \Teps$. It is easy to see that $\{ \chi_{T^\per} \sep  T \in \Teps \}$
  is a partition of unity for $\R^2$. Hence, using the identity
  \begin{equation}
    \label{eq:2d:13}
    \mint_{x}^{x+\eps r} \Dc{r} z \db = \Da{r} z(x),
  \end{equation}
  we can rewrite \eqref{eq:2d:delEa_v1} as
  \begin{displaymath}
    \b\< \del\Ea(y), z \b\> = \eps^2 \sum_{x \in \L} \sum_{r \in
      \Rg} V_{x,r} \cdot \mint_{x}^{x+\eps r} \B[ \sum_{T \in \Teps}
    \chi_{T^\per} \B] \Dc{r} z \db.
  \end{displaymath}
  Interchanging the order of summation and using the fact that $\Dc{r}
  z = (\D z) r$ holds $\db$-a.e. (note that, if the bond is aligned
  with an element edge, then $\Dc{r} z$ is continuous across that
  edge) yields
  \begin{displaymath}
    \< \del\Ea(y), z \> = \sum_{T \in \Teps} |T| \sum_{r \in \Rg}
    \frac{\eps^2}{|T|} \sum_{x \in \L} V_{x,r}
    \cdot \mint_{x}^{x+\eps r} \chi_{T^\per} \D z(T^\per) r \db.
 \end{displaymath}
 The term $\D z(T^\per) r = \D z(T) r$ can be taken outside the bond
 integral, and hence, employing the identity
  \begin{equation}
    \label{eq:2d:aGr_eq_arG}
    a \cdot (\mG r) = (a \otimes r) : \mG,
    \qquad \text{for } a, r \in \R^d, \mG \in \R^{d \times d},
 \end{equation}
 yields
 \begin{align*}
   \< \del\Ea(y), z \> =~& \sum_{T \in \Teps} |T| 
   \bg\{ \sum_{r \in \Rg} \frac{\eps^2}{|T|} \sum_{x \in \L}  
   \b[ V_{x,r} \otimes r \b]
   \mint_{x}^{x+\eps r} \chi_{T^\per} \db \bg\} : \D z(T) \\
   =:~& \sum_{T \in \Teps} |T| \, \Sa(y; T) : \D z(T).
 \end{align*}
 Finally, we use the fact that $\Da{\Rg} y$ is $2\Z^2$-periodic to deduce that
 \begin{align*}
   \Sa(y; T) =~& \sum_{r \in \Rg} \frac{\eps^2}{|T|} \sum_{x \in \L}  
   \b[ V_{x,r} \otimes r \b]
   \mint_{x}^{x+\eps r} \chi_{T^\per}  \db \\
   =~& \sum_{r \in \Rg} \frac{\eps^2}{|T|} \sum_{x \in \L}  
  \b[ V_{x,r} \otimes r \b] \sum_{\xi \in 2 \Z^2}
   \mint_{x}^{x+\eps r} \chi_{\xi+T} \db \\
   =~& \sum_{r \in \Rg} \frac{\eps^2}{|T|} \sum_{x \in \Lext}  
  \b[ V_{x,r} \otimes r \b] 
   \mint_{x}^{x+\eps r} \chi_{T} \db. \qedhere
 \end{align*}
\end{proof}

The terminology ``stress function'' for $\Sa$ is motivated by the fact
that $\Sa(y)$ takes precisely the same role as the first
Piola--Kirchhoff stress tensor in the continuum theory of elasticity.

In the next lemma we prove two useful properties of the atomistic
stress function $\Sa$. We show that $\Sa = \pp W$ under locally
homogeneous deformations and give a quantitative estimate for the
discrepancy between $\Sa$ and $\pp W$.

\begin{lemma}
  \label{th:2d:propSa}
  The stress function $\Sa$ defined in \eqref{eq:2d:defn_Sa} satisfies
  \begin{displaymath}
    \Sa(\yF; T) = \pp W(\mF) \qquad \forall \mF \in \R^{2 \times 2},
    \quad T \in \Teps.
  \end{displaymath}
  Moreover, we have the estimate
  \begin{equation}
    \label{eq:2d:Lip_Sa}
    \b| \Sa(y; T) - \pp W\b(\D y (T)\b) \b|
    \leq \eps M^\a {\rm osc}(\D y; \omega_T^\a)
    \qquad \forall y \in \Ys, \quad T \in \Teps,
  \end{equation}
  where $M^\a$ is defined in \eqref{eq:intro:defn_Ma} and
  $\omega_T^\a$ is defined in \eqref{eq:2d:defn_intnhd_a}.
\end{lemma}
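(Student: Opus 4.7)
The plan is to verify both claims directly from the explicit formula \eqref{eq:2d:defn_Sa} for $\Sa$, using the bond density lemma (Lemma \ref{th:aux:bdl}) to convert bond integrals into the volume $|T|$.

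For the first claim, I would substitute $y = \yF$ into \eqref{eq:2d:defn_Sa}. Since $\Da{\Rg}\yF(x) = \mF \Rg$ is independent of $x$, the term $\pp_r V(\mF\Rg) \otimes r$ factors out of the sum over $x$, leaving
\[
\Sa(\yF;T) = \sum_{r \in \Rg} \b[\pp_r V(\mF\Rg) \otimes r\b] \cdot \frac{\eps^2}{|T|}\sum_{x \in \Lext} \mint_x^{x+\eps r} \chi_T \db = \sum_{r \in \Rg} \pp_r V(\mF\Rg) \otimes r,
\]
by Lemma \ref{th:aux:bdl}. To finish the first part, I would apply the chain rule to $W(\mF) = V(\mF \Rg)$, which, in view of the identity \eqref{eq:2d:aGr_eq_arG}, yields exactly $\pp W(\mF) = \sum_r \pp_r V(\mF\Rg) \otimes r$.

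For the second claim, the key observation is that the same bond density calculation gives the representation
\[
\pp W\b(\D y(T)\b) = \sum_{r \in \Rg} \b[\pp_r V\b(\D y(T)\,\Rg\b) \otimes r\b] \cdot \frac{\eps^2}{|T|}\sum_{x \in \Lext} \mint_x^{x+\eps r} \chi_T \db,
\]
so that $\Sa(y;T) - \pp W(\D y(T))$ can be written as a single bond-integral sum with integrand $[\pp_r V(\Da{\Rg}y(x)) - \pp_r V(\D y(T)\,\Rg)] \otimes r$. Only bonds $(x,x+\eps r)$ with $\mint_x^{x+\eps r}\chi_T \db > 0$ contribute, and for such bonds both $(x,x+\eps s)$ (for any $s \in \Rg$) and a representative point of $T$ lie in $\omega_T^\a$. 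Using the Lipschitz bound \eqref{eq:intro:LipV} and the identity $\Da{s}y(x) = \mint_x^{x+\eps s} \D y(\xi) s \, \db$ (from \eqref{eq:2d:13}) then gives
\[
\b|\Da{s}y(x) - \D y(T)\,s\b| \leq \eps |s|\, {\rm osc}(\D y;\omega_T^\a),
\]
hence $|\pp_r V(\Da{\Rg}y(x)) - \pp_r V(\D y(T)\Rg)| \leq \eps\, {\rm osc}(\D y;\omega_T^\a) \sum_s M^\a_{r,s}|s|$. Multiplying by $|r|$, applying Lemma \ref{th:aux:bdl} one more time to absorb the $\eps^2|T|^{-1}\sum_x \mint_x^{x+\eps r}\chi_T \db$ factor into $1$, and recalling the definition \eqref{eq:intro:defn_Ma} of $M^\a$ yields \eqref{eq:2d:Lip_Sa}.

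The steps are mechanical once the bookkeeping is set up; the only point requiring mild care is to verify that for every bond $(x,x+\eps r)$ contributing to $T$ and every $s \in \Rg$, the entire auxiliary bond $(x,x+\eps s)$ sits inside $\omega_T^\a$, so that the oscillation estimate applies. This follows because a contributing bond meets $T$ at some $p = x + t\eps r \in T$, and then both $x$ and $x+\eps s$ are reached from $p$ by displacements of the form $t_1 r + t_2 s$ with $t_1, t_2$ bounded, placing them in $\omega_T^\a$ as defined in \eqref{eq:2d:defn_intnhd_a}.
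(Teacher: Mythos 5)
Your proposal is correct and follows essentially the same route as the paper: the bond density lemma to evaluate $\Sa(\yF;T)$ exactly, and then the Lipschitz bound \eqref{eq:intro:LipV} combined with the identity \eqref{eq:2d:13} and one more application of the bond density lemma to obtain \eqref{eq:2d:Lip_Sa} with the constant $M^\a$. The only (harmless) cosmetic difference is that you re-express $\pp W(\D y(T))$ itself as a bond sum before subtracting, whereas the paper subtracts $\Sa(\yF;T)$ with $\mF=\D y(T)$ directly; these are the same computation.
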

\begin{proof}
  {\it Part 1: } Since $[\pp_r V(\mF \Rg) \otimes r]$ is independent
  of $x$, we can apply the bond density lemma to the sum in curly
  brackets, to deduce that
  \begin{displaymath}
   \Sa(\yF; T) = \sum_{r \in \Rg} \B[\pp_r V\b(\mF \Rg\b)
    \otimes r \B] \bg\{
    \frac{\eps^2}{|T|} \sum_{x \in \Lext} \mint_{x}^{x+\eps r} 
    \chi_{T} \db  \bg\} 
    = \sum_{r \in \Rg}  \B[\pp_r V\b(\mF \Rg\b)
    \otimes r \B].
  \end{displaymath}
  Recalling that $W(\mF) = V(\mF \Rg)$, it can be easily checked that
  the sum on the right-hand side of the second equality equals $\pp
  W(\mF)$.

  {\it Part 2: } Let $\mF = \D y(T)$, $V_{x,r} = \pp_r V(\Da{\Rg}
  y(x))$, and $V_{\mF, r} = \pp_r V(\mF \Rg)$. From part 1 we obtain
  that
  \begin{align}
    \notag
    \b| \Sa(y; T) - \pp W(\mF) \b| =~&
    \bg|\sum_{r \in \Rg} \frac{\eps^2}{|T|} \sum_{x \in \Lext} 
    \B[ \b( V_{x, r} - V_{\mF, r} \b) \otimes r \B]
    \mint_{x}^{x+\eps r} \chi_{T} \db \bg| \\
    \label{eq:2d:21}
    \leq~& \sum_{r \in \Rg} |r| \frac{\eps^2}{|T|} \sum_{x \in \Lext} 
    \B|V_{x, r} - V_{\mF, r}\B|
    \mint_{x}^{x+\eps r} \chi_{T} \db.
  \end{align}

  We use the Lipschitz property \ref{eq:intro:LipV} to estimate
  \begin{align}
    \notag
    \b|V_{x, r} - V_{\mF, r}\b| \leq~& \sum_{s \in \Rg} M_{r,s}^\a 
    \b| \Da{s} y(x) - \mF s \b| \\
    \notag
    =~& \sum_{s \in \Rg} M_{r,s}^\a
    \bg| \mint_{x}^{x+\eps s} \B(\Dc{s} y - \mF s \B) \db \bg| \\
    \label{eq:2d:23c}    
    \leq~& \sum_{s \in \Rg} M_{r,s}^\a |s|
    \max_{x' \in (x, x+\eps s)} \b| \D y(x') - \mF \b|.
  \end{align}
  Since $(x, x+\eps s) \subset \omega_T^\a$, and recalling that $\mF =
  \D y(T)$, we can further estimate
  \begin{equation}
    \label{eq:2d:25}    
    \max_{x' \in (x, x+\eps s)} \b| \D y(x') - \mF \b| \leq
    \eps {\rm osc}(\D y; \omega_T^\a).
  \end{equation}

  We combine \eqref{eq:2d:25} with \eqref{eq:2d:23c} and insert the
  resulting estimate into \eqref{eq:2d:21} to arrive at
  \begin{displaymath}
    \b| \Sa(y; T) - \pp W(\mF) \b| \leq \eps {\rm osc}(\D y; \omega_T^\a) 
    \sum_{r \in \Rg} \sum_{s \in \Rg} |r||s| M_{r,s}^\a
    \frac{\eps^2}{|T|} \sum_{x \in \Lext} \mint_{x}^{x+\eps r} \chi_{T} \db.
 \end{displaymath}
 An application of the bond density lemma, and referring to the
 definition of $M^\a$ in \eqref{eq:intro:defn_Ma}, yields the stated
 result.
\end{proof}

\subsection{The a/c stress function} \quad
\label{sec:2d:Sac}
We wish to derive a similar representation of $\del\Eac$ in terms of a
stress function $\Sac$, as we did in \S\ref{sec:2d:Sa} for
$\del\Ea$. A straightforward calculation along the same lines as the
proof of Proposition \ref{th:2d:Sa}, recalling first the definition of
the partial derivative $\pp_b \Fi$ from \S\ref{sec:intro:loc_scal},
yields the following result.

\begin{proposition}
  \label{th:2d:Sac}
  Suppose that \eqref{eq:intro:noint_La_Omc} holds, then, for all $y,
  z \in \Ys$,
  \begin{displaymath}
    \b\<\del\Eac(y), z \b\> 
    = \sum_{T \in \Teps} |T|\, \Sac(y; T) : \D z(T)
    = \int_\Om \Sac(y) : \D z \dx,
  \end{displaymath}
  where 
  \begin{align*}
    \Sac(y; T) := \cases{ 
      \Sa(y; T), & T \in \Tepsa, \\[2mm]
      \pp W(\D y(T)), & T \in
      \Tepsc, \\[2mm]
      \sum_{r \in \Rg} \frac{\eps^2}{|T|} 
      \sum_{x \in \Laext} \b[ \pp_r V\b(\Da{\Rg} y(x)\b) \otimes r \b]
      \mint_{x}^{x+\eps r} \chi_T \db\,\, & \\
      + \frac{\eps^2}{|T|} \sum_{(x, x+\eps r) \in \Biext} \b[\pp_{(x,x+\eps r)} \Fi(y)
      \otimes r\b] \mint_{x}^{x+\eps r} \chi_T^\i \db, & 
      T \in \Tepsi. }
 \end{align*}
 where $\chi_T^\i$ is a modified characteristic function:
 \begin{displaymath}
   \chi_T^\i(x) = \cases{
     1, & x \in \pp \Omi^\per, \\
     \chi_T(x), & \text{otherwise}.
   }
 \end{displaymath}
\end{proposition}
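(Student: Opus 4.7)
The plan is to follow the blueprint of Proposition~\ref{th:2d:Sa}, handling the three summands of $\Eac$ separately and then assembling. First I would write
\begin{displaymath}
  \b\< \del\Eac(y), z \b\>
  = \eps^2 \sum_{x \in \La} \sum_{r \in \Rg} \pp_r V\b(\Da{\Rg}y(x)\b) \cdot \Da{r} z(x)
  + \int_{\Omc} \pp W(\D y) : \D z \dx
  + \b\< \del\Ei(y), z \b\>,
\end{displaymath}
where, recalling \eqref{eq:intro:defn_Fi_1} and the definition of $\pp_b \Fi$,
\begin{displaymath}
  \b\< \del\Ei(y), z \b\> = \eps^2 \sum_{(x,x+\eps r) \in \Bi}
  \pp_{(x,x+\eps r)} \Fi(y) \cdot \Da{r} z(x).
\end{displaymath}
The continuum piece is already of the desired form, since $\D y$ and $\D z$ are $\PO(\Teps)$ on $\Omc = \cup \Tepsc$, contributing $\sum_{T \in \Tepsc} |T| \pp W(\D y(T)) : \D z(T)$.

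Second, for the atomistic sum I would repeat verbatim the manipulations in the proof of Proposition~\ref{th:2d:Sa}: replace $\Da{r} z(x)$ by $\mint_{x}^{x+\eps r} \Dc{r} z \db$, insert the partition of unity $\sum_{T \in \Teps} \chi_{T^\per} \equiv 1$, interchange summation, use $a \cdot (\mG r) = (a\otimes r):\mG$, and exploit $2\Z^2$-periodicity of $\Da{\Rg}y$ to pass from $\L$ to $\Laext$. The new ingredient here is \eqref{eq:intro:noint_La_Omc}: every bond $(x, x+\eps r)$ with $x \in \La$ is contained in $(\Oma \cup \Omi)^\per$, hence $\mint_{x}^{x+\eps r} \chi_{T^\per} \db = 0$ for every $T \in \Tepsc$. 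Consequently this sum only produces a stress on $\Tepsa \cup \Tepsi$, matching the first term of the displayed formula for $T \in \Tepsi$; for $T \in \Tepsa$ one checks that the terms contributed by atoms in $\Lext \setminus \Laext$ vanish (no bonds from those atoms reach $\Oma$), so the resulting expression coincides with $\Sa(y;T)$ from \eqref{eq:2d:defn_Sa}.

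Third, for $\< \del\Ei(y),z\>$ I would run the same bond-integral/partition-of-unity argument. Since every $b \in \Bi$ lies in $\Omi^\per$, only triangles $T \in \Tepsi$ receive contributions. The subtle step is that $\Fi$ is permitted to depend on bonds lying on $\pp\Omi^\per$, i.e.\ on bonds aligned with the a/c interface. Such a bond sits on the shared edge of a triangle in $\Tepsi$ and a triangle in $\Tepsc \cup \Tepsa$; on this edge $\chi_{T^\per} \equiv \smfrac12$ for both adjacent triangles, and $\Dc{r} z$ is two-valued. Since the atomistic sum and the continuum integral do not see this bond at all, its total contribution must be attributed entirely to the interface triangle. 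This is accomplished by replacing $\chi_T$ with $\chi_T^\i$ inside the bond integral, which is the same as interpreting $\Dc{r} z$ as the one-sided trace from inside $\Omi^\per$. After these steps one obtains exactly the second term in the formula for $\Sac(y;T)$, $T \in \Tepsi$.

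The main obstacle I anticipate is the bookkeeping for bonds in $\Bi \cap \pp\Omi^\per$: rigorously justifying the one-sided evaluation of $\Dc{r} z$ on such bonds and checking that the substitution $\chi_T \rightsquigarrow \chi_T^\i$, applied only on $\Tepsi$, recovers the full factor of unity for each such bond without double counting against the atomistic or continuum contributions. This is precisely the point that will also explain the $\tfrac12$ in the scaling condition \eqref{eq:intro:Fi_scal}, since on these boundary bonds the naive $\chi_T = \tfrac12$ gets upgraded to $\chi_T^\i = 1$ on the interface side alone. Once this is handled, collecting the three contributions on each element type yields the piecewise formula for $\Sac$ as stated.
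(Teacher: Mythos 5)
Your proposal is correct and follows essentially the same route as the paper's proof: the same three-way splitting of $\del\Eac(y)$, the same bond-integral/partition-of-unity calculation from Proposition~\ref{th:2d:Sa} applied to the atomistic and interface sums (with \eqref{eq:intro:noint_La_Omc} ensuring no leakage into $\Omc$), and the same observation that the modified characteristic functions $\chi_T^\i$, $T \in \Tepsi$, form a partition of unity of $\Omi$ so that boundary bonds are attributed entirely to interface elements. The only small inaccuracy is your worry that $\Dc{r} z$ is two-valued on a bond lying along an element edge --- for $r$ tangent to that edge it is in fact continuous there (as noted in the proof of Proposition~\ref{th:2d:Sa}), which is precisely what makes the reattribution of weight via $\chi_T^\i$ unambiguous.
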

\begin{proof}
  Employing again the notation $V_{x, r} = \pp_r V(\Da{\Rg} y(x))$,
  the functional $\del\Eac(y)$ can be written as
  \begin{equation}
    \label{eq:2d:defnSac:10}
    \begin{split}
      \b\<\del\Eac(y), z \b\> =~& \int_{\Omc} \pp W(\D y) : \D z \dx
      + \eps^2 \sum_{x \in \La} \sum_{r \in \Rg} V_{x,r} \cdot \Da{r}
      u(x)  \\
      & + \eps^2 \sum_{(x, x+\eps r) \in \Bi} \pp_{(x, x+\eps r)} \Fi(y)
      \cdot \Da{r} u(x).
    \end{split}
  \end{equation}
  The first term on the right-hand side of \eqref{eq:2d:defnSac:10}
  gives rise to the definition of $\Sac(y; T)$ for $T \in
  \Tepsc$. (Note that \eqref{eq:intro:noint_La_Omc} guarantees that
  the bonds in the second group in \eqref{eq:2d:defnSac:10} do not
  contribute to $\Omc$.)

  After the same calculation as in the proof of Proposition
  \ref{th:2d:Sa}, the second group on the right-hand side of
  \eqref{eq:2d:defnSac:10} gives the definition of $\Sac(y; T)$ for $T
  \in \Tepsa$, as well as the first group in the definition of
  $\Sac(y; T)$ for $T \in \Tepsi$.

  The crucial modification to the previous argument is that the
  modified characteristic functions $\chi_T^\i$, $T \in \Tepsi$ form a
  partition of unity for $\Omi$ (except at a finite number of points,
  which do not contribute to bond integrals). Therefore, performing
  again a similar calculation as in the proof of Proposition
  \ref{th:2d:Sa} to ``distribute'' the third group on the right-hand
  side of \eqref{eq:2d:defnSac:10} between interface elements only, we
  obtain the second group in the definition of $\Sac(y; T)$ for $T \in
  \Tepsi$.

  Note that if we hadn't made the modification to the characteristic
  function, then $\Sac(y; T)$, $T \in \Tepsa\cup\Tepsc$ would contain
  contributions from $\Fi$.
\end{proof}

Since any discrete divergence-free tensor field may be added to $\Sac$
and still yield a valid representation of $\del\Eac$, it is not
surprising that, in general, $\Sac$ does not have the {\em necessary
  property} that $\Sac(\yF; T) = \pp W(\mF)$ for all $\mF \in \R^{2
  \times 2}$. This can already be observed in the nearest-neighbour,
flat interface constructions in \cite{OrtZha:2011a}. Hence, we need to
construct an alternative stress function $\Sacm$ representing
$\del\Eac$ that does have the desired properties. This construction
will be undertaken in the remainder of this section, using the
representation of discrete divergence-free vector fields as gradients
of Crouzeix--Raviart functions discussed in \S\ref{sec:aux:helmh}.

\paragraph{Consequences of global energy consistency} 
Recall from~\eqref{eq:intro:econs} that a functional $\E \in
\CC^1(\Ys)$ is called globally energy consistent if $\E(\yF) = \Ea(\yF)$ for
all matrices $\mF \in \R^{2 \times 2}$. The following lemma
establishes a simple but crucial consequence of this property.

\begin{lemma}
  \label{th:2d:econs}
  Suppose that $\E \in \CC^1(\Ys)$ is globally energy consistent, then
  \begin{displaymath}
    \< \del\E(\yF), \yG \> = |\Om|\, \pp W(\mF) : \mG
    \qquad \forall \, \mF, \mG \in \R^{2 \times 2}.
  \end{displaymath}
\end{lemma}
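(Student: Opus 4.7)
The plan is to use the defining property of global energy consistency together with the observation that the family $\{y_\mF\}$ is closed under linear combinations: for any $\mF, \mG \in \R^{2\times 2}$ and $t \in \R$, we have $\yF + t\yG = y_{\mF + t\mG}$, since this identity holds pointwise on $\Lext$. Consequently,
\[
\E(\yF + t\yG) = \E(y_{\mF+t\mG}) = \Ea(y_{\mF+t\mG}) \qquad \forall t \in \R,
\]
where the second equality is global energy consistency.

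Next, I would compute $\Ea(y_{\mF+t\mG})$ explicitly from the definition~\eqref{eq:intro:Ea}. Since $\Da{\Rg} y_\mathsf{H}(x) = \mathsf{H}\Rg$ is independent of $x$, and since $\#\L = (2N)^d = (2/\eps)^d$, one obtains
\[
\Ea(y_\mathsf{H}) = \eps^d (\#\L)\, V(\mathsf{H}\Rg) = 2^d\, W(\mathsf{H}) = |\Om|\, W(\mathsf{H}),
\]
using $|\Om| = 2^d$ and the definition~\eqref{eq:intro:defnW} of $W$. Hence $\E(\yF + t\yG) = |\Om|\, W(\mF + t \mG)$.

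Finally, differentiating at $t = 0$: the left-hand side is $\<\del\E(\yF), \yG\>$ by definition of the first variation, while the right-hand side is $|\Om|\, \pp W(\mF) : \mG$ by the chain rule applied to the smooth function $\mathsf{H} \mapsto W(\mathsf{H}) = V(\mathsf{H}\Rg)$. This yields the claimed identity.

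There is no real obstacle here — the lemma is essentially a one-line consequence of $\E|_{\{y_\mF\}} = \Ea|_{\{y_\mF\}}$ and the explicit formula $\Ea(\yF) = |\Om| W(\mF)$. The only points that require minor care are the book-keeping of the scaling factor ($\eps^d \#\L = |\Om|$) and the justification that $\yF + t\yG \in \Ys$ so that $\E$ and $\Ea$ can indeed be evaluated on this path; both are immediate.
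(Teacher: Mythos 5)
Your proof is correct and follows essentially the same route as the paper: both establish $\E(\yF)=\Ea(\yF)=|\Om|\,W(\mF)$ from global energy consistency, use $\yF+t\yG=y_{\mF+t\mG}$, and differentiate at $t=0$. The extra book-keeping you supply (that $\eps^d\,\#\L=2^d=|\Om|$) is a correct expansion of a step the paper leaves implicit.
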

\begin{proof}
  From the assumption of global energy consistency, and
  \eqref{eq:intro:defnW}, we obtain that
  \begin{displaymath}
    \E(\yF) = \Ea(\yF) = |\Om| W(\mF) \qquad \forall \mF \in \R^{2
      \times 2}.
  \end{displaymath}
  Since $\yF + t \yG = y_{\mF + t \mG}$ this implies that
  \begin{displaymath}
    \b\< \del\E(\yF), \yG \b\> = |\Om| \lim_{t \to 0} \frac{W(\mF + t \mG)
      - W(\mF)}{t} = |\Om| \pp W(\mF) : \mG. \qedhere
  \end{displaymath}
\end{proof}

If we apply the foregoing lemma to an a/c functional $\Eac$ we obtain
the following corollary.

\begin{corollary}
  \label{th:2d:econs_cor}
  Suppose that $\Eac$ is globally energy consistent, then
  \begin{displaymath}
    \mint_{\Om} \Sac(\yF) \dx = \pp W(\mF)
    \qquad \forall \mF \in \R^{2 \times 2}.
  \end{displaymath}
\end{corollary}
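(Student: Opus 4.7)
The plan is to combine the stress function representation of $\del\Eac$ from Proposition~\ref{th:2d:Sac} with the global energy consistency identity from Lemma~\ref{th:2d:econs}, and then exploit the fact that for a homogeneous test deformation $\yG$, the gradient $\D\yG \equiv \mG$ is constant and can be pulled outside the integral.

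Concretely, I would fix arbitrary matrices $\mF, \mG \in \R^{2 \times 2}$, and apply Proposition~\ref{th:2d:Sac} to $y = \yF \in \Ys$ and $z = \yG \in \Ys$. Since $\D\yG(x) = \mG$ for every $x$, the resulting integral representation collapses to
\begin{displaymath}
  \b\<\del\Eac(\yF), \yG\b\> = \int_\Om \Sac(\yF) : \D\yG \dx = \bg(\int_\Om \Sac(\yF) \dx\bg) : \mG.
\end{displaymath}
On the other hand, Lemma~\ref{th:2d:econs} applied to $\E = \Eac$ (which is globally energy consistent by hypothesis) yields
\begin{displaymath}
  \b\<\del\Eac(\yF), \yG\b\> = |\Om|\, \pp W(\mF) : \mG.
\end{displaymath}

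Equating the two right-hand sides gives
\begin{displaymath}
  \bg(\int_\Om \Sac(\yF) \dx\bg) : \mG = |\Om|\, \pp W(\mF) : \mG \qquad \forall \mG \in \R^{2 \times 2}.
\end{displaymath}
Since $\mG$ is arbitrary and the Frobenius inner product is non-degenerate, this forces the matrix identity $\int_\Om \Sac(\yF) \dx = |\Om|\, \pp W(\mF)$, which after dividing by $|\Om|$ gives exactly the claimed mean-value formula $\mint_\Om \Sac(\yF) \dx = \pp W(\mF)$.

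There is no real obstacle here: the argument is essentially a two-line chain linking the two previously established identities, with the only subtlety being the observation that a homogeneous deformation has constant gradient, so the stress integrates trivially against it. Nothing in the proof uses the detailed piecewise structure of $\Sac$ from Proposition~\ref{th:2d:Sac} (continuum, interface, or atomistic pieces) — only that $\Sac(\yF)$ represents $\del\Eac(\yF)$ as a $\PO(\Teps)^{2\times 2}$ tensor field — so the corollary is genuinely a direct consequence of the two named results.
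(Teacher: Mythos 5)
Your proof is correct and coincides with the paper's own argument: the paper likewise combines the representation $\b\<\del\Eac(\yF), \yG\b\> = \b(\int_\Om \Sac(\yF)\dx\b) : \mG$ from Proposition~\ref{th:2d:Sac} with Lemma~\ref{th:2d:econs} and uses the arbitrariness of $\mG$. Nothing further is needed.
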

\begin{proof}
  This result follows simply from Lemma \ref{th:2d:econs} and the fact
  that, for all $\mG \in \R^{2 \times 2}$,
  \begin{displaymath}
    \b\<\del\Eac(\yF), \yG\b\> = \int_\Om \Sac(\yF) : \mG \dx
    = \bg(\int_\Om \Sac(\yF) \dx \bg) : \mG. \qedhere
  \end{displaymath} 
\end{proof}

\paragraph{Consequences of patch test consistency} 
\label{sec:2d:cons_pt}
The examples given in \cite{OrtZha:2011a} show that patch test
consistency does not necessarily imply that $\Sac(\yF; T) = \pp
W(\mF)$. In the current paragraph we characterise the discrepancy
between $\Sac(\yF; T)$ and $\pp W(\mF)$.

First, we show that the test functions $u_h \in \Ush$ in the patch
test \eqref{eq:intro:patchtest} may be replaced by arbitrary
displacements $u \in \Us$.

\begin{lemma}
  \label{th:2d:pth_impl_pteps}
  Suppose that $\Eac$ is patch test consistent and that $\Tha \cup
  \Thi \subset \Teps$; then we also have
  \begin{equation}
    \label{eq:2d:pt_eps}
    \b\< \del\Eac(\yF), u \b\> = 0 \qquad \forall \, u \in \Us, \quad 
    \mF \in \R^{2 \times 2}.
  \end{equation}
\end{lemma}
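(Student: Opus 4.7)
The plan is to compare an arbitrary $u \in \Us$ with its $\Th$-nodal interpolant $u_h := I_h u \in \Ush$, invoke patch test consistency \eqref{eq:intro:patchtest} on $u_h$, and use the stress representation of Proposition~\ref{th:2d:Sac} to reduce the discrepancy $\b\<\del\Eac(\yF), u - u_h\b\>$ to a boundary integral over the interface $\partial\Omc$ that vanishes thanks to a mesh-compatibility argument.

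The first step is the observation that $u \equiv u_h$ on $\Omi \cup \Oma$: by the hypothesis $\Tha \cup \Thi \subset \Teps$, every $T \in \Tha \cup \Thi$ is a single simplex of $\Teps$, so both $u|_T$ and $u_h|_T$ are affine and coincide at the three vertices of $T$ (which lie in $\Lext$). Combined with the identity $\Sac(\yF; T) = \pp W(\mF)$ for $T \in \Tepsc$, taken from Proposition~\ref{th:2d:Sac}, this yields
\begin{displaymath}
 \b\<\del\Eac(\yF), u - u_h\b\> = \pp W(\mF) : \int_{\Omc} \D(u - u_h) \dx.
\end{displaymath}
Since $\pp W(\mF)$ is a constant tensor, integration by parts (interpreted periodically, so that contributions from opposite faces of the fundamental cell $\Om$ cancel by $2\Z^2$-periodicity of $u - u_h$) rewrites the right-hand side as a surface integral of $(\pp W(\mF)\,\nu)\cdot (u - u_h)$ over the interior interface between $\Omc$ and $\Omi \cup \Oma$.

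The main technical point, and the one place where the mesh hypothesis is really used, will be to show that $u = u_h$ along this interface. Since $\Th$ is a regular, hence conforming, triangulation, each interface edge $e$ is shared between a $\Thc$-element and an element of $\Tha \cup \Thi$; the latter being a simplex of $\Teps$ forces $e$ to be an edge of $\Teps$ as well. Consequently $u|_e$ and $u_h|_e$ are both affine, they agree at the two endpoints (vertices of $\Th$ contained in $\Lext$), and hence coincide on all of $e$. The boundary integral therefore vanishes, giving $\b\<\del\Eac(\yF), u - u_h\b\> = 0$; combined with $\b\<\del\Eac(\yF), u_h\b\> = 0$ from \eqref{eq:intro:patchtest} this establishes \eqref{eq:2d:pt_eps}.
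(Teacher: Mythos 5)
Your proposal is correct and follows essentially the same route as the paper's own proof: compare $u$ with $I_h u$, use that $\Sac(\yF)$ is the constant $\pp W(\mF)$ on $\Omc$ to reduce the discrepancy to $\pp W(\mF):\int_{\Omc}\D(u-I_h u)\dx$, convert this to a boundary integral over $\pp\Omc^\per\cap\Om$, and observe that $u=I_h u$ there (and on all of $\Omi\cup\Oma$) because of $\Tha\cup\Thi\subset\Teps$, before invoking the patch test for $I_h u\in\Ush$. You merely spell out in more detail the mesh-compatibility facts that the paper states tersely.
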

\begin{proof}
  Fix $u \in \Us$; then, using the assumption that $\Tha \cup \Thi
  \subset \Teps$, we have
  \begin{displaymath}
    \int_{\Omc} \D u \dx = \int_{(\pp \Omc^\per)\cap\Om} u \otimes
    \nu \ds = \int_{(\pp \Omc^\per)\cap\Om} I_h u \otimes
    \nu \ds = \int_{\Omc} \D I_h u \dx.
  \end{displaymath}
  Since $\Sac(\yF) = \pp W(\mF)$ is constant in $\Omc$, and since $I_h
  u = u$ in $\Omi \cup \Oma$, we can therefore deduce that
  \begin{displaymath}
    \b\< \del\Eac(\yF), u \b\> = \int_{\Om} \Sac(\yF) : \D u \dx
    = \int_\Om \Sac(\yF) : \D I_h u \dx = \b\< \del\Eac(\yF), I_h u \b\> 
    =  0. 
  \end{displaymath}
  The penultimate equality requires some justification, but follows
  quite easily from the particular form of $\Eac$ assumed in
  \eqref{eq:intro:Eac} and the assumption that $\Tha \cup \Thi \subset
  \Teps$.
\end{proof}

\begin{lemma}
  \label{th:2d:cons_of_pt}
  Suppose that $\Eac$ is patch test consistent and globally energy consistent
  and that $\Thi\cup\Tha \subset \Teps$; then, for each $\mF \in \R^{2
    \times 2}$, there exists a function $\w(\mF; \cdot) \in
  \CRper(\Teps)^2$ such that
  \begin{displaymath}
    \Sac(\yF; T) = \pp W(\mF) + \D\w(\mF; T) \mJ \qquad \forall \, T
    \in \Teps,
  \end{displaymath}
  where $\mJ$ is a rotation matrix defined in Lemma
  \ref{th:aux:divfree}.  Moreover, if ${\rm int}(\Oma)$ is connected,
  then we may choose $\w(\mF) = 0$ in $\Oma$.
\end{lemma}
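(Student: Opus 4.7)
The plan is to apply Lemma \ref{th:aux:divfree} to the tensor field $\sigma := \Sac(\yF) - \pp W(\mF)$, which belongs to $\POper(\Teps)^{2\times 2}$. I need to check two things: (a) that $\sigma$ is discrete divergence-free in the sense of that lemma, and (b) that the constant $\sigma_0$ appearing in the resulting decomposition vanishes. Once $\sigma = \D w\,\mJ$ with $w \in \CRper(\Teps)^2$, I simply set $\w(\mF;\cdot) := w$ and the claimed identity is immediate.

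For (a), fix $u \in \Us = \PIper(\Teps)^2$. By Proposition \ref{th:2d:Sac} and Lemma \ref{th:2d:pth_impl_pteps}, $\int_\Om \Sac(\yF):\D u\dx = \<\del\Eac(\yF),u\> = 0$. Since $\pp W(\mF)$ is constant and $u$ is periodic, the divergence theorem yields $\int_\Om \pp W(\mF):\D u\dx = \pp W(\mF):\int_\Om \D u\dx = 0$. Hence $\int_\Om \sigma:\D u\dx = 0$ for every $u \in \Us$, and Lemma \ref{th:aux:divfree} gives $\sigma = \sigma_0 + \D w\,\mJ$ for some $\sigma_0 \in \R^{2\times 2}$ and $w \in \CRper(\Teps)^2$.

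For (b), I invoke global energy consistency: Corollary \ref{th:2d:econs_cor} yields $\int_\Om \Sac(\yF)\dx = |\Om|\pp W(\mF)$, hence $\int_\Om \sigma\dx = 0$. Integrating the decomposition gives $0 = |\Om|\sigma_0 + \b(\int_\Om \D w\dx\b)\mJ$, so it suffices to show that $\int_\Om \D w\dx = 0$ for every $w \in \CRper(\Teps)^2$. This I would verify by applying the divergence theorem on each triangle and summing: along any interior edge the jump $w|_{T_1} - w|_{T_2}$ is affine and vanishes at the edge midpoint, so its contribution integrates to zero along the edge; along $\pp\Om$ the contributions cancel in pairs by periodicity. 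Thus $\sigma_0 = 0$ and $\sigma = \D w\,\mJ$. This mean-zero identity for periodic CR gradients is the one step that is not entirely routine, and it is the main (minor) technical obstacle.

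For the final claim, assume $\mathrm{int}(\Oma)$ is connected. By the definition of $\Sac$ in Proposition \ref{th:2d:Sac} together with Lemma \ref{th:2d:propSa}, $\Sac(\yF;T) = \Sa(\yF;T) = \pp W(\mF)$ for every $T \in \Tepsa$, so $\D w(T)\,\mJ = 0$ there, and invertibility of $\mJ$ forces $\D w(T) = 0$ on $\Tepsa$. Thus $w$ is piecewise constant on the triangles of $\Tepsa$; the continuity of CR functions at edge midpoints, combined with connectedness of $\mathrm{int}(\Oma)$ (which lets me chain adjacent triangles of $\Tepsa$ via shared edges), implies $w$ takes a single constant value on $\Oma$. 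Subtracting this constant produces a new element of $\CRper(\Teps)^2$ that still satisfies $\sigma = \D w\,\mJ$ and vanishes on $\Oma$, completing the proof.
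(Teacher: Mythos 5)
Your proposal is correct and follows essentially the same route as the paper: patch test consistency plus Lemma \ref{th:2d:pth_impl_pteps} gives discrete divergence-freeness, Lemma \ref{th:aux:divfree} gives the decomposition, Corollary \ref{th:2d:econs_cor} together with the element-wise integration by parts and midpoint continuity of Crouzeix--Raviart functions pins down the constant, and $\Sac=\Sa=\pp W$ on $\Tepsa$ plus connectedness yields the normalisation $\w=0$ in $\Oma$. The only cosmetic difference is that you apply Lemma \ref{th:aux:divfree} to $\Sac(\yF)-\pp W(\mF)$ rather than to $\Sac(\yF)$ itself, which is equivalent.
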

\begin{proof}
  If $\Eac$ is patch test consistent then, according to Lemma
  \ref{th:2d:pth_impl_pteps},
  \begin{displaymath}
    \int_\Om \Sac(\yF) : \D u \dx = \< \del\Eac(\yF), u \> = 0
    \qquad \forall u \in \Us.
  \end{displaymath}
  Hence, according to Lemma \ref{th:aux:divfree}, there exists a
  constant $\Sigma_0 \in \R^{2 \times 2}$, a vector-valued
  Crouzeix--Raviart function $\w = \w(\mF; \cdot) \in
  \CRper(\Teps)^2$, and a rotation matrix $\mJ$, such that
  \begin{displaymath}
    \Sac(\yF; T) = \Sigma_0 + \D \w(T) \mJ \qquad \forall T \in \Teps.
  \end{displaymath}

  Using global energy consistency of $\Eac$ and Corollary
  \ref{th:2d:econs_cor} we obtain that
  \begin{displaymath}
    \pp W(\mF) = \mint_\Om \Sac(\yF) \dx
    = \Sigma_0 + \mint_\Om \D \w\mJ \dx.
  \end{displaymath}
  If $\int_\Om \D \w \dx = 0$, then $\Sigma_0 = \pp W(\mF)$ and hence
  the result follows.

  To prove this, we integrate by parts separately in each element:
  \begin{align*}
    \int_\Om \D \w \dx =~& \sum_{T \in \Teps} \int_{\pp T} \w \otimes
    \nu \ds 
    = \sum_{f \in \Edg} \b( \w^+ \otimes \nu^+ + \w^- \otimes
    \nu^- \b) \ds = 0,
  \end{align*}
  where, in the last equality, we used the fact that $\int_f (\w^+ -
  \w^-) \ds = 0$ for all edges $f$, since $\w$ is continuous in the
  edge midpoints.

  Finally, since $\Sac(\yF; T) = \pp W(\mF)$ for all $T \in \Tepsa$
  (cf. Lemma \ref{th:2d:Sac} and Lemma \ref{th:2d:propSa}), it follows
  that $\D \w = 0$ in $\Oma$. Hence, if ${\rm int}(\Oma)$ is
  connected, then we can shift $\w$ by a constant so that $\w = 0$ in
  $\Oma$.
\end{proof}

\paragraph{The modified a/c stress function}
\label{sec:2d:defn_Sacm}
We wish to construct a modified a/c stress function $\Sacm$ that can
be used to represent $\del\Eac$, and satisfies the crucial property
that $\Sacm(\yF; T) = \pp W(\mF)$ for all $\mF \in \R^{2 \times 2}$,
$T \in \Tepsi$.

To this end, we generalize the Crouzeix--Raviart function $\w(\mF)$
defined in Lemma \ref{th:2d:cons_of_pt} to arbitrary deformations $y
\in \Ys$. Since we will use later on that the modified function
$\modw$ vanishes in $\Oma$, we require from now on that ${\rm
  int}(\Oma)$ is connected so that we can choose $\w(\mF, \cdot) = 0$
in $\Oma$ for all $\mF \in \R^{2 \times 2}$.

For each $y \in \Ys$ and each face $f \in \Edgext$, $f = T_1 \cap
T_2$, we define the patch $\omega_f = (T_1 \cup T_2) \setminus
\Oma^\per$, and the deformation gradient averages
\begin{displaymath}
  \mF_f(y) := \cases{
    \mint_{\omega_f} \D  y \dx, & \text{if } |\omega_f| > 0 \\
    0, & \text{otherwise.}
  }
\end{displaymath}
Note that $\omega_f$ was defined in such a way that $\omega_f \subset
\omega_{T_1} \cap \omega_{T_2}$. The value $\mF_f(y) = 0$ for $f
\subset \Oma^\per$ is of no importance, and could have been replaced
by any other value.

With this notation, and recalling the definitions of the edge
midpoints $q_f$ and the periodic nodal basis functions $\zeta_f^\per$
from \S\ref{sec:aux:CR_space}, we can define
\begin{equation}
  \label{eq:2d:defn_modwh}
  \modw(y; \cdot) = \sum_{f \in \Edg} \w\b( \mF_f(y); q_f \b) \zeta_f^\per.
\end{equation}
Note, in particular, that $\modw(\yF) = \w(\mF)$ for all $\mF \in
\R^{2 \times 2}$. It is therefore natural to define the modified
stress function
\begin{equation}
  \label{eq:2d:defn_Sacm}
  \Sacm(y; T) := \Sac(y; T) - \D \modw(y; T) \mJ, \qquad \text{for } T
  \in \Teps.
\end{equation}
In the following lemma we establish some elementary properties of
$\Sacm$.



\begin{lemma}
  \label{th:2d:prop_Sacm}
  Suppose that $\Eac$ is energy and patch test consistent, that ${\rm
    int}(\Oma)$ is connected, and that $\Tha\cup\Thi \subset
  \Teps$; then the modified a/c stress function $\Sacm$, defined in
  \eqref{eq:2d:defn_Sacm}, has the following properties:
  \begin{align}
    \label{eq:2d:prop_Sacm:delEac}
    \b\< \del\Eac(y), z \b\> =~& \int_\Om \Sacm(y) : \D z \dx \qquad
    \forall\, y, z \in \Ys; \\
    \label{eq:2d:prop_Sacm:TiTc}
    \Sacm(\yF; T) =~& \pp W(\mF) \qquad \forall\, \mF \in \R^{2 \times
      2}, \quad T \in \Tepsc \cup \Tepsi; \quad \text{and} \\
   \label{eq:2d:prop_Sacm:Ta}
    \Sacm(y; T) =~& \Sa(y; T) \qquad \forall\, y \in \Ys, \quad  T \in \Tepsa. 
 \end{align}
\end{lemma}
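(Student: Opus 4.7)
The plan is to verify each of the three properties by working directly from the definition $\Sacm(y; T) = \Sac(y; T) - \D\modw(y; T)\mJ$ and combining three ingredients: the integral representation of $\del\Eac$ via $\Sac$ (Proposition~\ref{th:2d:Sac}), the identity $\Sac(\yF; T) = \pp W(\mF) + \D\w(\mF; T)\mJ$ from Lemma~\ref{th:2d:cons_of_pt}, and the fact that $\w(\mG; \cdot) \equiv 0$ on $\Oma$ for every $\mG$, which is available because ${\rm int}(\Oma)$ is assumed connected. For \eqref{eq:2d:prop_Sacm:delEac}, in view of Proposition~\ref{th:2d:Sac} it is enough to show that $\int_\Om \D\modw(y)\mJ : \D z \dx = 0$ for every $z \in \Ys$. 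Splitting $z = \yG + u$ with $u \in \Us$, the $\D u$-contribution vanishes by the ``reverse direction'' of Lemma~\ref{th:aux:divfree} applied to $\sigma = \D\modw(y)\mJ$, since $\modw(y) \in \CRper(\Teps)^2$. For the remaining constant contribution I would integrate by parts element by element; the interior boundary terms cancel because $\modw(y)$ is continuous at every edge midpoint $q_f$, yielding $\int_\Om \D\modw(y) \dx = 0$ exactly as in the final step of the proof of Lemma~\ref{th:2d:cons_of_pt}.

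For \eqref{eq:2d:prop_Sacm:Ta}, the key claim is that $\modw(y; \cdot) \equiv 0$ on $\Oma^\per$. Every edge $f$ of a triangle $T \in \Tepsa$ is also an edge of some element of $\Tepsa$, so the choice made in Lemma~\ref{th:2d:cons_of_pt} forces $\w(\mG; q_f) = 0$ for every $\mG \in \R^{2\times 2}$. Definition \eqref{eq:2d:defn_modwh} then gives $\modw(y; q_f) = 0$ at every midpoint of every edge of $T$, so $\modw(y)|_T \equiv 0$ and $\D\modw(y; T) = 0$. Combined with $\Sac(y; T) = \Sa(y; T)$ on $\Tepsa$ (Proposition~\ref{th:2d:Sac}), this yields $\Sacm(y; T) = \Sa(y; T)$. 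For \eqref{eq:2d:prop_Sacm:TiTc}, Lemma~\ref{th:2d:cons_of_pt} reduces the claim to proving $\D\modw(\yF; T) = \D\w(\mF; T)$ on each $T \in \Tepsc \cup \Tepsi$, which will follow from the pointwise identity $\modw(\yF; q_f) = \w(\mF; q_f)$ at each edge midpoint of such a $T$. For every such $f$ at least one adjacent element lies in $\Tepsc \cup \Tepsi$, so $|\omega_f| > 0$; since $\D\yF \equiv \mF$ on $\omega_f$, we read off $\mF_f(\yF) = \mF$ directly, and the identity is exactly \eqref{eq:2d:defn_modwh}.

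The main subtlety, and essentially the only thing that needs a moment of thought, is consistency on the interface $\pp\Oma \cap \pp\Omi$: for an edge $f$ shared between $T_1 \in \Tepsi$ and $T_2 \in \Tepsa$, the value $\modw(\yF; q_f)$ supplied to $T_1$ for the argument above must be compatible with the value $\modw(\yF; q_f) = 0$ forced by paragraph~2 on the $T_2$ side. They do agree: on this $f$ one has $|\omega_f| = |T_1| > 0$ and $\mF_f(\yF) = \mF$, while $q_f$ is simultaneously a midpoint of an edge of the atomistic element $T_2$, whence $\w(\mF; q_f) = 0$ by Lemma~\ref{th:2d:cons_of_pt}. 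The definition of $\omega_f$ as a subset of $\omega_{T_1} \cap \omega_{T_2}$ excluding $\Oma^\per$, together with the convention $\mF_f(y) := 0$ when $|\omega_f| = 0$, is exactly what makes this compatibility automatic, so the three properties then follow without further work.
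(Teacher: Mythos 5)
Your proof is correct and follows essentially the same route as the paper's: the divergence-free/mean-zero argument for \eqref{eq:2d:prop_Sacm:delEac}, the identity $\modw(\yF)=\w(\mF)$ (which you verify edge-by-edge via $\mF_f(\yF)=\mF$) combined with Lemma \ref{th:2d:cons_of_pt} for \eqref{eq:2d:prop_Sacm:TiTc}, and the vanishing of $\modw(y)$ on $\Oma$ together with Proposition \ref{th:2d:Sac} for \eqref{eq:2d:prop_Sacm:Ta}. The only difference is that you spell out the compatibility check at edges on $\pp\Oma$, which the paper absorbs into the one-line remark that $\modw(\yF)=\w(\mF)$ following \eqref{eq:2d:defn_modwh}.
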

\begin{proof}
  To prove \eqref{eq:2d:prop_Sacm:delEac} let $z = \yB + u$ for
  some $\mB \in \R^{2 \times 2}$ and $u \in \Us$; then
  \begin{displaymath}
    \int_\Om \Sacm(y) : \D z \dx = \< \del\Eac(y), z \> -
    \int_\Om \b(\D \modw \mJ\b) : \b( \mB + \D u \b) \dx.
  \end{displaymath}
  Since $\int_\Om (\D \modw \mJ) : \D u \dx = 0$ by Lemma
  \ref{th:aux:divfree}, and since $\int_\Om \D \modw \dx = 0$ (see the
  proof of Lemma~\ref{th:2d:cons_of_pt}), the representation
  \eqref{eq:2d:prop_Sacm:delEac} follows.

  Property \eqref{eq:2d:prop_Sacm:TiTc} follows from Lemma
  \ref{th:2d:cons_of_pt} and the fact that $\modw(\yF) =
  \w(\mF)$:
  \begin{align*}
    \Sacm(\yF; T) = \Sac(\yF; T) - \D \w(\mF; T) \mJ  = \pp W(\mF)
    \quad \forall T \in \Teps.
  \end{align*}
  
  Property \eqref{eq:2d:prop_Sacm:Ta} follows from the fact that
  we constructed $\modw(y)$ to be zero in $\Oma$ for all $y \in
  \Ys$, and from Proposition \ref{th:2d:Sac}, which states that
  $\Sac(y; T) = \Sa(y; T)$ for all $T \in \Tepsa$.
\end{proof}

\subsection{The Lipschitz property}
\label{sec:2d:Lip_Sacm}
The final remaining ingredient for the proof of first-order
consistency, is a Lipschitz property for $\Sacm$, similar to the
Lipschitz property \eqref{eq:2d:Lip_Sa} of $\Sa$. In order to ensure
that there are no modelling error contributions from the atomistic
region it turns out to be most convenient to work directly with the
stress difference
\begin{equation}
  \label{eq:2d:defn:Rac}
  \Rac(y; T) := \Sacm(y; T) - \Sa(y; T).
\end{equation}
From \eqref{eq:2d:prop_Sacm:Ta} we immediately obtain that
\begin{equation}
  \label{eq:2d:Rac_Ta}
  \Rac(y; T) = 0 \qquad \forall \, T \in \Tepsa.
\end{equation}

In the remainder of the section we will estimate $\Rac(y; T)$ for $T
\in \Tepsc \cup \Tepsi$. To motivate the following result we note that,
from Lemma \ref{th:2d:propSa} and from \eqref{eq:2d:prop_Sacm:TiTc} we
see that
\begin{equation}
  \label{eq:2d:Rac_Tic}
  \Rac(\yF; T) = 0 \qquad \forall\, \mF \in \R^{2 \times 2}, \quad T \in
  \Tepsc \cup \Tepsi;
\end{equation}
hence, a suitable Lipschitz estimate for $\Rac$ yields the following
result.

\begin{lemma}
  \label{th:2d:main_estimate}
  Suppose that all conditions of Lemma \ref{th:2d:prop_Sacm} hold
  and, in addition, that $\Ei$ satisfies the locality and scaling
  conditions \eqref{eq:intro:Fi_loc} and \eqref{eq:intro:Fi_scal};
  then
  \begin{equation}
    \label{eq:3d:main_estimate}
    \b| \Rac(y; T) \b| \leq \eps M_T \, {\rm osc}(\D y; \omega_T)
    \qquad \forall y \in \Ys, \quad T \in \Teps,
  \end{equation}
  where $M_T$ is defined in \eqref{eq:2d:defn_barM}.
\end{lemma}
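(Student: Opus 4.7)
The plan is to reduce everything to a Lipschitz estimate around the homogeneous reference $\yF_T$ with $\mF_T := \D y(T)$. The case $T \in \Tepsa$ is immediate from \eqref{eq:2d:Rac_Ta}. For $T \in \Tepsc \cup \Tepsi$, I would use the fact (from \eqref{eq:2d:Rac_Tic}) that $\Rac(\yF_T;T) = 0$ to write $\Rac(y;T) = \Rac(y;T) - \Rac(\yF_T;T)$ and split
\begin{displaymath}
  \Rac(y;T) = \bigl[\Sacm(y;T) - \pp W(\mF_T)\bigr] - \bigl[\Sa(y;T) - \pp W(\mF_T)\bigr].
\end{displaymath}
The second bracket is already controlled by Lemma \ref{th:2d:propSa}, contributing $\eps M^\a\,{\rm osc}(\D y; \omega_T^\a)$. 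For the first bracket, I plug in $\Sacm = \Sac - \D\modw\,\mJ$ and the characterisation $\D\w(\mF_T;T)\mJ = \Sac(\yF_T;T) - \pp W(\mF_T)$ from Lemma \ref{th:2d:cons_of_pt}, obtaining
\begin{displaymath}
  \Sacm(y;T) - \pp W(\mF_T) = \bigl[\Sac(y;T) - \Sac(\yF_T;T)\bigr] - \bigl[\D\modw(y;T) - \D\w(\mF_T;T)\bigr]\mJ,
\end{displaymath}
so the work is to estimate each of these two terms.

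For the first summand, when $T \in \Tepsc$ the formula $\Sac(y;T) = \pp W(\D y(T))$ makes it vanish. When $T \in \Tepsi$ I would repeat the argument of Lemma \ref{th:2d:propSa} applied to the explicit formula of Proposition \ref{th:2d:Sac}: the $V$-contribution is handled exactly as in Lemma \ref{th:2d:propSa}, producing $\eps M^\a\,{\rm osc}(\D y;\omega_T^\a)$, while the $\Fi$-contribution is handled the same way but using \eqref{eq:intro:Fi_scal} in place of \eqref{eq:intro:bound_DrsV}, producing $\eps M^\i\,{\rm osc}(\D y;\omega_T)$. The boundary $\smfrac12$ factor in \eqref{eq:intro:Fi_scal} is what prevents double-counting when the bond integral $\mint\chi_T^\i\,\db$ straddles $\pp\Omi^\per$; this gives the $(M^\a+M^\i)$ contribution to $M_T$ on $\Tepsi$.

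The core of the proof is the bound on $\D\bigl(\modw(y) - \w(\mF_T)\bigr)(T)$. Let $\chi := \modw(y;\cdot) - \w(\mF_T;\cdot)$, which is a CR-function (since $\CRper(\Teps)$ is a linear space) vanishing identically on $\Oma^\per$ by the normalisation built into Lemma \ref{th:2d:cons_of_pt} and into the definition \eqref{eq:2d:defn_modwh}. Expressing $\D\chi(T)$ via the three CR basis gradients on $T$ gives the pointwise bound $|\D\chi(T)| \leq C_{\mathrm{shape}}\,\eps^{-1}\max_{f\subset T}|\chi(q_f)|$ with a constant depending only on the fixed mesh $\Teps$ (this will supply the numerical factor $7$ in \eqref{eq:2d:defn_barM}). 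To control $\chi(q_f) = \w(\mF_f(y);q_f) - \w(\mF_T;q_f)$ I would integrate $\D\chi$ along a shortest discrete path $\gamma \in \Gamma_{f,f'}$ from $q_f$ to a midpoint $q_{f'}$ with $f' \subset \Oma^\per$, using \eqref{eq:aux:pathint_Dw}. The essential point is that $\D\chi$ vanishes on every $T' \in \Tepsa$ (both terms are zero there) and every $T' \in \Tepsc$ (since $\D\w(\mF;T')\mJ = \Sac(\yF;T') - \pp W(\mF) = 0$ for every $\mF$ on continuum elements); so only the portion of $\gamma$ inside $\Omi$ contributes, and by \eqref{eq:2d:defn_widthOmi} (possibly prefixing a short excursion through $\Omc$ when $f \subset \Omc^\per$, which contributes nothing to the integral) its length is at most $\eps\,{\rm width}(\Omi)$. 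On that portion I apply the Lipschitz estimate
\begin{displaymath}
  |\D\w(\mF_1;T') - \D\w(\mF_2;T')| \leq (M^\a + M^\i)\,|\mF_1 - \mF_2|, \qquad T' \in \Tepsi,
\end{displaymath}
which follows from the identity $\D\w(\mF;T')\mJ = \Sac(\yF;T') - \pp W(\mF)$ together with the $\mF$-Lipschitz estimates for $\Sac(\yF;T')$ and $\pp W(\mF)$ already used above. Finally, $|\mF_f(y) - \mF_T| \leq \eps\,{\rm osc}(\D y;\omega_T)$ holds because the patch $\omega_f$ lies in the star of $T$ inside $\omega_T$ by \eqref{eq:2d:defn_intnhd}. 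Chaining these estimates yields
\begin{displaymath}
  |\D\chi(T)| \leq C_{\mathrm{shape}}\,(M^\a+M^\i)\,\eps\,{\rm width}(\Omi)\,{\rm osc}(\D y;\omega_T),
\end{displaymath}
which, after gathering the contributions, matches the prefactor $M_T$ of \eqref{eq:2d:defn_barM} on both $\Tepsc$ and $\Tepsi$.

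The main obstacle I expect is not conceptual but bookkeeping: reconciling all Lipschitz constants so that the prefactors collapse exactly to the formula \eqref{eq:2d:defn_barM}. In particular, one must ensure that the boundary $\smfrac12$ in \eqref{eq:intro:Fi_scal} is consistently tracked in \emph{both} the direct estimate of $\Sac(y;T) - \Sac(\yF_T;T)$ and the path-integral bound on $\D\chi$, and that the precise shape-regular CR constant for the chosen subdivision of $\Teps$ really yields the factor $7$. A secondary subtlety is that the reference neighbourhood $\omega_T$ in \eqref{eq:2d:defn_intnhd} must be taken large enough to cover all $\omega_f$ for edges $f\subset T$, including those slightly protruding from $\Omc$ into $\Omi$ when $T \in \Tepsc$; this is why the definition of $\omega_T$ includes the full star of $T$ rather than only $\omega_T^\c$.
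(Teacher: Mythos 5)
Your overall architecture matches the paper's: reduce to the homogeneous state $\yF_T$ with $\mF_T = \D y(T)$ via \eqref{eq:2d:Rac_Tic}, handle the stress difference by Lipschitz estimates, and control $\D\modw(y;T) - \D\w(\mF_T;T)$ by expanding in the Crouzeix--Raviart basis (the factor $7$) and integrating $\D\w(\mF_1)-\D\w(\mF_2) = \mJ^{-1}$ times the difference of $[\Sac(\yF)-\pp W(\mF)]$ along a shortest path into $\Oma$, which only picks up the $\Omi$ portion of length at most $\eps\,{\rm width}(\Omi)$. The continuum-element case and the corrector estimate are essentially the paper's argument.

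However, there is a genuine gap in the interface-element case. You split $\Rac(y;T) = [\Sacm(y;T)-\pp W(\mF_T)] - [\Sa(y;T)-\pp W(\mF_T)]$ and estimate the two $V$-sums \emph{separately}: the second bracket via Lemma \ref{th:2d:propSa} (giving $\eps M^\a\,{\rm osc}(\D y;\omega_T^\a)$) and the $\Laext$-part of $\Sac(y;T)-\Sac(\yF_T;T)$ by the same Lipschitz argument. For $T\in\Tepsi$ both of these individually involve finite differences $\Da{\Rg}y(x)$ for $x\in\Laext$, whose bonds reach into $\Oma^\per$; so each bound requires the oscillation of $\D y$ over a set intersecting the atomistic region, whereas $\omega_T$ in \eqref{eq:2d:defn_intnhd} explicitly excludes $\Oma^\per$. (It also double-counts, giving $2M^\a$ instead of the single $M^\a$ in \eqref{eq:2d:defn_barM}.) This is not merely bookkeeping: if $y$ oscillates strongly in $\Oma$ (the typical situation near a defect) but is smooth outside, your two terms can each be large while $\Rac(y;T)$ is small, so the claimed bound with ${\rm osc}(\D y;\omega_T)$ cannot be reached by this route. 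The paper avoids this by first combining the $\sum_{x\in\Laext}$ sum in $\Sac(y;T)$ with the full $\sum_{x\in\Lext}$ sum in $\Sa(y;T)$, so that these contributions cancel \emph{exactly} and only the remainder $\Rac^{(2)}$ over $x\in\Lext\setminus\Laext$ survives, whose bonds do not meet $\Oma$; only then is the Lipschitz estimate applied. You need to restore this cancellation before estimating, i.e., work with $\Sac(y;T)-\Sa(y;T)$ directly on $\Tepsi$ rather than passing through $\pp W(\mF_T)$ term by term.
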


\medskip
The proof of this central lemma is split over the following
paragraphs.

\paragraph{Estimates in the continuum region} 
\label{sec:2d:Racest_Tc}
Let $T \in \Tepsc$, then 
\begin{align}
  \notag
  \b|\Rac(y; T)\b| =~& \B|\b[\Sac(y; T) - \D \modw(y; T) \mJ\b] -
  \Sa(y; T) \B| \\
  \notag
  \leq~& \b| \pp W(\D y) - \Sa(y; T) \b| + \b| \D \modw(y;
  T) \mJ \b| \\
  \label{eq:2d:Rac1_est_Tc}
  \leq~&  \eps M^\a {\rm osc}\b(\D y; \omega_T\b) + \b| \D \modw(y;
  T) \b|, 
\end{align}
where, in the last inequality, we used \eqref{eq:2d:Lip_Sa} and the
fact that $\omega_T^\a \subset \omega_T$ for $T \in \Tepsc$. We still
need to estimate $\D \modw(y; T)$, which we postpone until
\S\ref{sec:2d:Est_wh}.

\paragraph{Estimates in the interface region}
\label{sec:2d:Racest_Ti}
Let $T \in \Tepsi$, and let $V_{x, r} = \pp_r V(\Da{\Rg} y(x))$, then
\begin{align*}
  \Rac(y; T) =~& \Sac(y; T) - \Sa(y; T) - \D \modw(y; T) \mJ \\
  =~& \sum_{r \in \Rg} \frac{\eps^2}{|T|} \sum_{x \in \Laext} \B[
  V_{x, r} \otimes r \B] \mint_{x}^{x+\eps r} \chi_T \db 
  + \frac{\eps^2}{|T|} \sum_{\substack{b \in \Biext \\ b = (x, x+\eps
      r)}} \B[\pp_b \Fi(y) \otimes r \B] \mint_{x}^{x+\eps r} \chi_T^\i
  \db \\
  & - \sum_{r \in \Rg} \frac{\eps^2}{|T|} \sum_{x \in \Lext} \B[V_{x, r}
  \otimes r \B] \mint_{x}^{x+\eps r} \chi_T \db  
  - \D \modw(y; T) \mJ,
\end{align*}
which, after combining the first and third group, becomes
\begin{align*}
  \Rac(y; T) =~& \frac{\eps^2}{|T|} \sum_{\substack{b \in \Biext \\ b = (x, x+\eps
      r)}} \B[\pp_b \Fi(y) \otimes r \B] \mint_{x}^{x+\eps r} \chi_T^\i 
  \db  \\
  & - \sum_{r \in \Rg} \frac{\eps^2}{|T|} \sum_{x \in \Lext \setminus
    \Laext} \B[V_{x, r} \otimes r \B] \mint_{x}^{x+\eps r} \chi_T \db
  - \D \modw(y; T) \mJ \\
  =:~& \Rac^{(1)}(y; T) - \Rac^{(2)}(y; T) - \D \modw(y; T) \mJ. \\
\end{align*}
We will again postpone the estimation of $\D \modw(T)$ to
\S\ref{sec:2d:Est_wh}, and focus on the terms $\Rac^{(1)}(y; T)$ and
$\Rac^{(2)}(y; T)$.

Let $\mF = \D y(T)$. Using the {\em locality and scaling conditions}
\eqref{eq:intro:Fi_loc} and \eqref{eq:intro:Fi_scal}, we can estimate
\begin{align}
  \notag
  \b| \Rac^{(1)}(y; T) - \Rac^{(1)}(\yF; T) \b| \leq~&
  \frac{\eps^2}{|T|} \sum_{\substack{b \in \Biext \\ b = (x, x+\eps r)}}
  \B| \pp_b \Fi(y) - \pp_b \Fi(\yF) \B| |r| \mint_{x}^{x+\eps r}
  \chi_T^\i \db \\
  \label{eq:2d:Rac1est_Ti_pre}
  \leq~& \frac{\eps^2}{|T|} \sum_{\substack{b \in \Biext \\ b = (x,
      x+\eps r)}}
  \sum_{\substack{s \in \Rg \\ (x, x+\eps s) \in \Biext}} M^\i_{r,s}
  \b| \Da{s} y(x) - \mF s \b| |r| \mint_{x}^{x+\eps r}
  \chi_T \db.
\end{align}
In the transition from the first to the second line we have used the
fact that on bonds that lie on the boundary of $\Omi^\per$ the
constant $M_{r,s}^\i$ is replaced by $\smfrac12 M_{r,s}^\i$, which
effectively replaces $\chi_T^\i$ by $\chi_T$. Bounding $| \Da{s} y(x)
- \mF s |$ by the local oscillation, and applying the bond density
lemma (see Lemma \ref{th:2d:propSa} for a similar calculation), we
deduce that
\begin{equation}
  \label{eq:2d:Rac1est_Ti}
  \b| \Rac^{(1)}(y; T) - \Rac^{(1)}(\yF; T) \b|
  \leq \eps \sum_{r \in \Rg} \sum_{s \in \Rg} |r| |s| M_{r,s}^\i
  \, {\rm osc}(\D y; \omega_T)
  = \eps M^\i {\rm osc}(\D y; \omega_T).
\end{equation}

Following closely the proof of \eqref{eq:2d:Lip_Sa}, we obtain a
similar estimate for $\Rac^{(2)}$:
\begin{equation}
  \label{eq:2d:Rac2est_Ti}
  \b| \Rac^{(2)}(y; T) - \Rac^{(2)}(\yF; T) \b|
  \leq \eps \sum_{r \in \Rg} \sum_{s \in \Rg} |r| |s| M_{r,s}^\a
  \, {\rm osc}(\D y; \omega_T)
  = \eps M^\a {\rm osc}(\D y; \omega_T).  
\end{equation}
Note that it is enough to measure the oscillation over $\omega_T$
(which does not intersect with $\Oma$), since $(x, x + \eps s) \cap
\Oma = \emptyset$ for all $x \in \Lext \setminus \Laext, s \in \Rg$.

Combining \eqref{eq:2d:Rac1est_Ti} and \eqref{eq:2d:Rac2est_Ti}, and
using the fact that $\Rac(\yF; T) = 0$, we conclude that
\begin{align}
  \notag 
  \b| \Rac(y; T) \b| =~& \b| \Rac(y; T) - \Rac(\yF; T) \b| \\
  %
  \notag
  \leq~& \b| \Rac^{(1)}(y; T) - \Rac^{(1)}(\yF; T) \b|
  + \b| \Rac^{(2)}(y; T) - \Rac^{(2)}(\yF; T) \b| 
  \notag
   + \b| \D \modw(y; T) - \D \modw(\yF; T) \b|  \\
  \label{eq:2d:Racest_Ti_pre:c}
  \leq~& \eps \b( M^\i + M^\a \b) {\rm osc}(\D y; \omega_T)
  + \b| \D \modw(y; T) - \D \w(\mF; T) \b|.
\end{align}

\paragraph{Estimates on $\w$ and on $\modw$}
\label{sec:2d:Est_wh}
To finalize the estimates in \S\ref{sec:2d:Racest_Tc} and
\S\ref{sec:2d:Racest_Ti} we are left to establish a Lipschitz property
for $\modw$.  The following result is a fundamental technical lemma
that will allow us to achieve this. Its proof is deceptively simple,
however, it uses implicitly many of the foregoing
calculations. Moreover, some questions left open by Theorem
\ref{th:2d:main_result} may be answered through a better understanding
of this step.

\begin{lemma}
  \label{th:2d:Lipest_wh}
  Suppose that the conditions of Lemma \ref{th:2d:main_estimate}
  hold; then, for all $f \in \Edgext, f \subset \Omc \cup \Omi$, and
  for all $\mF, \mG \in \R^{2 \times 2}$, we have
  \begin{displaymath}
    \b| \w(\mF; q_f) - \w(\mG; q_f)\b| \leq 
    \eps \b(M_\a + M_\i\b) {\rm width}(\Omi) \b| \mF - \mG \b|,
 \end{displaymath}
 where ${\rm width}(\Omi)$ is defined in \eqref{eq:2d:defn_widthOmi}.
\end{lemma}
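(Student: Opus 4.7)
The strategy is to realise $\w(\mF;\cdot) - \w(\mG;\cdot) \in \CRper(\Teps)^2$ as a line integral of the piecewise constant tensor field
\begin{displaymath}
  \alpha(T) \,:=\, \bigl(\Sac(\yF;T) - \Sac(\yG;T) - \pp W(\mF) + \pp W(\mG)\bigr)\mJ^T,
\end{displaymath}
and then to exploit the fact that $\alpha$ vanishes outside the interface region. Lemma \ref{th:2d:cons_of_pt} gives that $\alpha$ is precisely the element-wise gradient of $\w(\mF;\cdot) - \w(\mG;\cdot)$, and since ${\rm int}(\Oma)$ is connected we have $\w(\mF;\cdot) \equiv \w(\mG;\cdot) \equiv 0$ on $\Oma^\per$. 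Continuity of the difference in edge midpoints together with \eqref{eq:aux:pathint_Dw} therefore yield, for any $\gamma \in \Gamma_{f',f}$ with $f' \subset \Oma^\per$,
\begin{displaymath}
  \w(\mF;q_f) - \w(\mG;q_f) \,=\, \int_\gamma \alpha \cdot \dx.
\end{displaymath}
Proposition \ref{th:2d:Sac} gives $\Sac(\yF;T) = \pp W(\mF)$ on $T \in \Tepsc$, while the same proposition combined with Lemma \ref{th:2d:propSa} gives the identical conclusion on $T \in \Tepsa$. Thus $\alpha \equiv 0$ off of $\Tepsi$, and only the portion of $\gamma$ inside $\Omi^\per$ contributes.

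The core estimate to establish is the pointwise Lipschitz bound
\begin{displaymath}
  |\alpha(T)| \,\leq\, (M^\a + M^\i)\,|\mF - \mG| \qquad \forall\, T \in \Tepsi.
\end{displaymath}
To derive it I would apply the bond density lemma to rewrite
\begin{displaymath}
  \pp W(\mF) \,=\, \sum_{r \in \Rg} \frac{\eps^2}{|T|} \sum_{x \in \Lext} \bigl[\pp_r V(\mF\Rg) \otimes r\bigr] \mint_x^{x+\eps r}\chi_T \db,
\end{displaymath}
subtract from the explicit formula for $\Sac(\yF;T)$ on interface elements given in Proposition \ref{th:2d:Sac}, and then take the $\mF \mapsto \mG$ difference. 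This leaves a sum over $\Lext \setminus \Laext$ of $[\pp_r V(\mF\Rg) - \pp_r V(\mG\Rg)] \otimes r$ and a sum over $\Biext$ of $[\pp_b\Fi(\yF) - \pp_b\Fi(\yG)] \otimes r$. The first is controlled by \eqref{eq:intro:LipV} and delivers $M^\a |\mF - \mG|$ after one more application of the bond density lemma; the second is controlled by the Lipschitz consequence of \eqref{eq:intro:Fi_scal}, where the $\smfrac12$ factor on bonds lying on $\pp \Omi^\per$ precisely converts $\chi_T^\i$ into $\chi_T$ (exactly as in \eqref{eq:2d:Rac1est_Ti_pre}), and another application of the bond density lemma yields $M^\i |\mF - \mG|$.

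For the choice of $\gamma$, I would take the minimiser in \eqref{eq:2d:defn_widthOmi}: if $f \subset \Omi^\per$ the definition produces a path of total length at most $\eps\,{\rm width}(\Omi)$ directly. If instead $f \subset \Omc^\per$, I would prepend a piecewise affine segment through the connected component of $\Omc^\per$ containing $q_f$ until it reaches an edge lying in $\pp\Omc^\per \cap \Omi^\per$; since $\alpha$ vanishes on $\Tepsc$ this prefix contributes zero, and the remainder reduces to the previous case. Combining these ingredients gives $|\w(\mF;q_f) - \w(\mG;q_f)| \leq \eps (M^\a + M^\i)\,{\rm width}(\Omi)\,|\mF - \mG|$. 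The main technical obstacle is the interface Lipschitz bound: it is crucial that the bond density lemma be applied on a full sum over $\Lext$ against $\chi_T$, and the bookkeeping of bonds lying on $\pp\Omi^\per$ (which enter $\Sac$ through $\chi_T^\i$ rather than $\chi_T$) must be absorbed cleanly by the $\smfrac12$ halving in \eqref{eq:intro:Fi_scal}; without this, the interface contribution would pick up a factor proportional to the number of boundary bonds touching $T$ and the prefactor $M^\i$ in the statement would be lost.
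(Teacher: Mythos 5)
Your proposal is correct and follows essentially the same route as the paper: represent $\w(\mF;q_f)-\w(\mG;q_f)$ as a path integral from an edge in $\Oma^\per$ (where $\w$ vanishes by connectedness), observe that the integrand $\bigl(\Sac(\yF)-\Sac(\yG)-\pp W(\mF)+\pp W(\mG)\bigr)\mJ^T$ vanishes outside $\Omi$, bound it on interface elements by $(M^\a+M^\i)|\mF-\mG|$ via the same bond-density/Lipschitz calculation as in \S\ref{sec:2d:Racest_Ti}, and minimise the path length. You are in fact slightly more explicit than the paper on two points it leaves implicit: the specialisation of the interface Lipschitz estimate to homogeneous states, and the treatment of edges $f\subset\Omc$ by prepending a zero-contribution prefix through the continuum region.
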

\begin{proof}
  Fix some $f' \in \Edgext$, $f' \subset \Oma$; then, for any
  connecting path $\gamma \in \Gamma_{f',f}$ we have
  \begin{align*}
     \w(\mF; q_f) - \w(\mG; q_f) =~& \int_{\gamma} \B( \D \w(\mF) - \D
     \w(\mG) \B) \cdot \dx \\
     =~& \int_{\gamma} \B(
     \b[ \pp W(\mF) - \Sac(\yF) \b] - \b[\pp W(\mG) - \Sac(\yG) \b]
     \B) \cdot \dx.
  \end{align*}
  Since $\Sac(\yB; T) = \pp W(\mB)$ for all $T \in \Tepsc \cup \Tepsa$,
  $\mB \in \R^{2 \times 2}$, the integrand vanishes in $\Oma \cup
  \Omc$. Hence, it follows that
  \begin{displaymath}
    \b| \w(\mF; q_f) - \w(\mG; q_f) \b| \leq
    {\rm length}(\gamma \cap \Omi)  \max_{T \in \Tepsi} 
    \B| \b[\Sac(\yF; T)  - \Sac(\yG; T)\b] - \b[ \pp W(\mF) - \pp W(\mG)
    \b] \B|.
  \end{displaymath}
  Following closely the calculations in Section \ref{sec:2d:Racest_Ti}
  we can deduce that 
  \begin{displaymath}
    \b| \w(\mF; q_f) - \w(\mG; q_f) \b| \leq
    {\rm length}(\gamma \cap \Omi) \,
    \b(M_\a + M_\i\b) \,  \b| \mF - \mG \b|.
  \end{displaymath}

  Since we are free to choose the path $\gamma$, we can choose it so
  that ${\rm length}(\gamma \cap \Omi)$ is minimized, which yields the
  stated result.
\end{proof}

Let $T \in \Tepsi \cup \Tepsc$, let $\mF = \D y(T)$, and recall that
$\zeta_f$ are the Crouzeix--Raviart nodal basis functions associated
with edge midpoints $q_f$; then, using Lemma \ref{th:2d:Lipest_wh} we obtain
\begin{align*}
  \b| \D \modw(y; T) - \D \w(\mF; T) \b|
  \leq~& \sum_{\substack{f \in \Edgext \\ f \subset \pp T}} 
  \b| \w( \mF_f(y); q_f) - \w(\mF; q_f) \b|
  \b| \D \zeta_f(T) \b| \\
  \leq~& \b(M_\a + M_\i\b) \, {\rm width}(\Omi) \B\{ \max_{\substack{f \in
      \Edgext \\ f\subset \pp T}} \b| \mF_f(y) - \mF \b| \B\}
  \, \bg\{ \eps \sum_{\substack{f \in \Edgext \\ f\subset \pp T}} \b| \D
  \zeta_f(T) \b| \bg\}.
\end{align*}
A direct calculation yields
\begin{displaymath}
   \eps \sum_{\substack{f \in \Edgext \\ f\subset \pp T}} \b| \D
  \zeta_f(T) \b| = 2 + 2 + 2 \sqrt{2} \leq 7.
\end{displaymath}
From the definitions of $\mF_f(y)$ and $\mF$ it follows that
\begin{equation}
  \label{eq:2d:Lipest_modwh}
  \b| \D \modw(y; T) - \D \w(\mF; T) \b| \leq \eps
  7 \b(M_\a + M_\i\b)  {\rm width}(\Omi) \, {\rm osc}(\D y; \omega_T)
  \quad \forall T \in \Tepsi \cup \Tepsc.
\end{equation}

\medskip
\begin{proof}[Proof of Lemma \ref{th:2d:main_estimate}]
  Combining \eqref{eq:2d:Lipest_modwh} with \eqref{eq:2d:Rac1_est_Tc}
  and noting that $\D \w(\mF) = 0$ for $T \in \Tepsc$
  (cf. \eqref{eq:2d:prop_Sacm:TiTc} and the fact that $\Sac(\yF) = \pp
  W(\mF)$ in $\Omc$), and also combining \eqref{eq:2d:Lipest_modwh}
  with \eqref{eq:2d:Racest_Ti_pre:c}, we finally arrive at the result
  of Lemma \ref{th:2d:main_estimate}.
\end{proof}

\subsection{Remarks on the conditions of Theorem \ref{th:2d:main_result}} \quad
\label{sec:2d:remarks}
In this section, we construct simple examples to discuss the various
assumptions of Theorem \ref{th:2d:main_result}. We will show that most
assumptions are also necessary.

\paragraph{Technical conditions}
The assumption that $\Tha \cup \Thi \subset \Teps$, and the assumption
\eqref{eq:intro:noint_La_Omc}, were made for the sake of convenience
of the analysis and simplicity of presentation. Dropping this
assumption is not straightforward, but it is reasonable to expect that
a careful analysis should allow to do so.

The same statement applies to the assumptions made on the interaction
potential; this was already discussed in \S\ref{sec:intro:assV}.

\paragraph{Connectedness of $\Oma$}
\label{sec:2d:connOma}
The assumption that ${\rm int}(\Oma)$ is connected is more
problematic; at this point it is unclear whether or not it can be
removed in general. A more detailed analysis of the functions $\w(\mF,
\cdot)$, $\mF \in \R^{2 \times 2}$, defined in \S\ref{sec:2d:cons_pt}
is required to understand this issue. There are, however, at least two
special cases where one can attempt to remove it with relatively
little effort:
\begin{itemize}
\item {\it Well separated components: } If $\Oma$ has several
  connected components, which are separated by an $O(1)$ distance,
  then one can localize the consistency error estimate to each of the
  components and obtain a qualitatively similar result as Theorem
  \ref{th:2d:main_result}.
\item {\it Specific a/c methods: } Suppose that the GCC method
  described in \S\ref{sec:intro:int_corr} is used to construct a patch
  test consistent coupling scheme, with parameters $C_{x, r,
    s}$. Suppose, moreover, that $\Oma$ has two connected components,
  $\Om_1$ and $\Om_2$, each of which have a portion of the boundary
  with the same orientation (say, normal $e_1$), as displayed in
  Figure \ref{fig:two_components}.

  \begin{figure}
    \includegraphics[height=4cm]{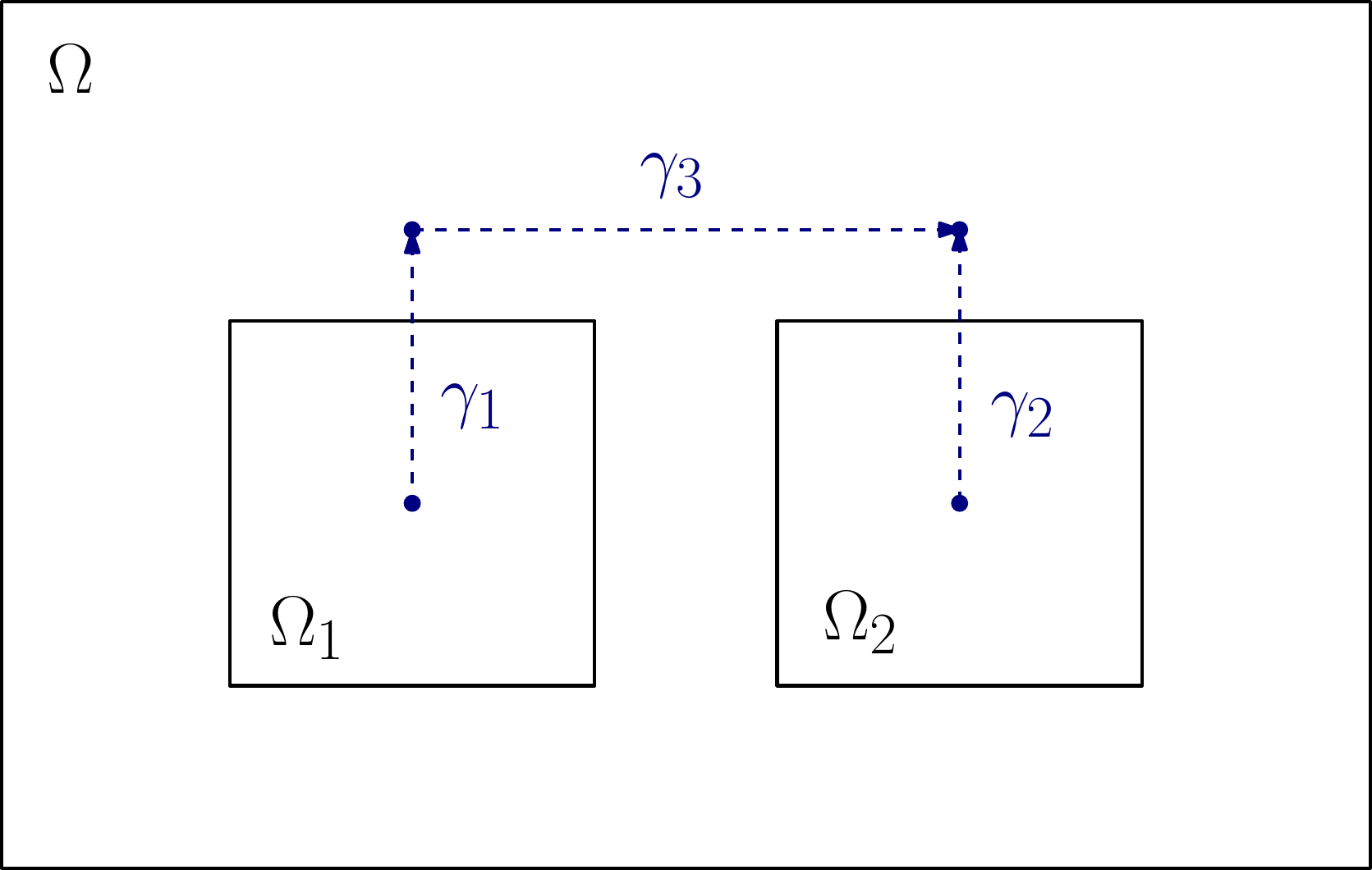}
    \caption{\label{fig:two_components} Atomistic region with two
      components to visualise the argument given in
      \S\ref{sec:2d:connOma}.}
  \end{figure}
  
  It is then reasonable to assume that the parameters $C_{x,r,s}$ have
  the same value in those parts of the interface surrounding $\Om_1$
  and $\Om_2$, which would imply that
  \begin{displaymath}
    \int_{\gamma_1} \Sac(\yF) \cdot \dx = \int_{\gamma_2} \Sac(\yF) \cdot \dx,
  \end{displaymath}
  Moreover, since $\Sac(\yF) = \pp W(\mF)$ in the continuum region, we
  would obtain that
  \begin{displaymath}
    \int_{\gamma_1 \cup \gamma_3 \cup \gamma_2'} \b( \Sac(\yF) - \pp
    W(\mF) \b) \cdot \dx = 0,
  \end{displaymath}
  where $\gamma_2'$ denotes the curve $\gamma_2$ with reversed
  orientation. 

  This shows that it is possible to choose $\psi(\mF; \cdot) = 0$ in
  both components of $\Oma$, and as a consequence, Theorem
  \ref{th:2d:main_result} would remain true.
\end{itemize}

A related issue is the dependence of the modelling error estimate
\eqref{eq:2d:main_result} on ${\rm width}(\Omi)$, which comes solely
from the Lipschitz estimate on $\mF \mapsto \w(\mF; \cdot)$;
cf. \S\ref{sec:2d:Est_wh}. Hence, a better understanding of this
function may also allow a finer analysis of this undesirable
dependence.

\paragraph{The global energy consistency condition}
\label{sec:2d:rem_econs}
Global energy consistency is a natural and convenient condition that yields
the important intermediate result (Corollary \ref{th:2d:econs_cor})
that
\begin{equation}
  \label{eq:2d:cons_econs_disc}
  \mint_\Om \Sac(\yF) \dx = \pp W(\mF) \qquad \forall \, \mF \in \R^{2
    \times 2}.
\end{equation}
Note also that \eqref{eq:2d:cons_econs_disc} implies $\Eac(\yF) =
\Ea(\yF) + c$ for all $\mF \in \R^{2 \times 2}$, where $c$ is a fixed
constant that is independent of $\mF$; that is,
\eqref{eq:2d:cons_econs_disc} is practically equivalent to global
energy consistency.

In some important situations patch test consistency already implies
\eqref{eq:2d:cons_econs_disc}. The following result gives such a
result for finite atomistic regions.

\begin{proposition}
  Suppose that $\Oma \cup \Omi \subset {\rm int}(\Om)$; then patch
  test consistency \eqref{eq:intro:patchtest} of $\Eac$ implies
  \eqref{eq:2d:cons_econs_disc}.
\end{proposition}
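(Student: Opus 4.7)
The plan is to derive the averaged identity by using patch test consistency to represent $\Sac(\yF)$ as a Crouzeix--Raviart gradient via Lemma~\ref{th:aux:divfree}, and then to combine periodicity of this representation with the hypothesis $\Oma \cup \Omi \subset {\rm int}(\Om)$ to pin down the constant part as $\pp W(\mF)$.

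First I would apply Lemma~\ref{th:2d:pth_impl_pteps}---whose proof uses only patch test consistency and the standing assumption $\Tha \cup \Thi \subset \Teps$, not global energy consistency---to upgrade \eqref{eq:intro:patchtest} to $\b\< \del\Eac(\yF), u \b\> = 0$ for every $u \in \Us$. By Proposition~\ref{th:2d:Sac} this is exactly $\int_\Om \Sac(\yF) : \D u \dx = 0$ for all $u \in \Us$, so Lemma~\ref{th:aux:divfree} supplies a constant $\Sigma_0(\mF) \in \R^{2 \times 2}$ together with a function $\w(\mF) \in \CRper(\Teps)^2$ such that
\[
  \Sac(\yF; T) = \Sigma_0(\mF) + \D \w(\mF; T)\, \mJ \qquad \forall\, T \in \Teps.
\]
The elementwise integration-by-parts computation carried out in the proof of Lemma~\ref{th:2d:cons_of_pt} yields $\int_\Om \D \w(\mF) \dx = 0$, whence $\mint_\Om \Sac(\yF) \dx = \Sigma_0(\mF)$, and it suffices to prove $\Sigma_0(\mF) = \pp W(\mF)$.

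For this I would exploit that on continuum elements the stress is already explicit: $\Sac(\yF; T) = \pp W(\mF)$ for every $T \in \Tepsc$ by Proposition~\ref{th:2d:Sac}. Consequently the constant matrix
\[
  \Xi := \b( \pp W(\mF) - \Sigma_0(\mF) \b)\, \mJ^T
\]
equals $\D \w(\mF; T)$ on every element of $\Tepsc$. The hypothesis $\Oma \cup \Omi \subset {\rm int}(\Om) = (-1, 1)^2$ provides some $\delta > 0$ with $\Oma \cup \Omi \subset (-1+\delta, 1-\delta)^2$, so that for each $j \in \{1,2\}$ the set $\Omc^\per$ contains an entire axis-aligned lattice strip in the $e_j$-direction. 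Picking an edge $\hat f \in \Edgext$ inside such a strip, both $\hat f$ and its translate $\hat f + 2 e_j$ lie in $\Omc^\per$, and there is a Crouzeix--Raviart path $\gamma_j \in \Gamma_{\hat f,\, \hat f + 2 e_j}$ traversing only elements that lie in $\Omc^\per$. Since $\D \w(\mF) = \Xi$ on every such element, the path integral identity~\eqref{eq:aux:pathint_Dw} gives
\[
  \w(\mF; q_{\hat f} + 2 e_j) - \w(\mF; q_{\hat f}) = \int_{\gamma_j} \D \w(\mF) \cdot \dx = 2\, \Xi e_j,
\]
while the left-hand side vanishes by periodicity $\w(\mF) \in \CRper(\Teps)^2$. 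Hence $\Xi e_j = 0$ for $j = 1, 2$, so $\Xi = 0$ and $\Sigma_0(\mF) = \pp W(\mF)$, which is \eqref{eq:2d:cons_econs_disc}.

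The hard part is the geometric step in the third paragraph: constructing Crouzeix--Raviart paths from $q_{\hat f}$ to $q_{\hat f} + 2 e_j$ lying wholly in $\Omc^\per$. This is precisely where the finite-atomistic-region assumption enters, forcing $\Omc^\per$ to contain wrap-around strips in both coordinate directions; without it, $\Omc^\per$ could be non-wrapping in some direction and the pinning-down step would collapse---which is consistent with the fact that in the general setting Theorem~\ref{th:2d:main_result} must assume global energy consistency as an independent hypothesis.
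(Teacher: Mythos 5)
Your proof is correct, but it takes a genuinely different route from the paper's. The paper argues directly with a test function: since $\Oma \cup \Omi$ is compactly contained in $\Om$, for any $\mG \in \R^{2\times 2}$ one can choose $u \in \Us$ with $\D u = \mG$ on $\Oma \cup \Omi$; writing $\< \del\Eac(\yF), u\> = \int_{\Omc} \pp W(\mF) : \D u \dx + \int_{\Oma\cup\Omi} \Sac(\yF) : \mG \dx$ and integrating by parts twice (the surface terms along $\pp\Om$ cancel by periodicity), the first variation collapses to $\int_{\Oma\cup\Omi} [\Sac(\yF) - \pp W(\mF)] : \mG \dx$, which vanishes by patch test consistency for every $\mG$; combined with $\Sac(\yF) = \pp W(\mF)$ on $\Omc$ this gives \eqref{eq:2d:cons_econs_disc}. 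You instead route through Lemma~\ref{th:aux:divfree}, identify $\D\w(\mF)$ with the constant $\Xi = (\pp W(\mF) - \Sigma_0)\mJ^T$ on all periodic translates of continuum elements, and kill $\Xi$ by integrating along wrap-around Crouzeix--Raviart paths and invoking periodicity of $\w$. Both arguments use the hypothesis $\Oma \cup \Omi \subset {\rm int}(\Om)$ in essentially dual ways: the paper needs it to build a periodic $u$ whose gradient is a prescribed constant on $\Oma\cup\Omi$, you need it to guarantee that $\Omc^\per$ contains full lattice strips in both coordinate directions (which holds because the outermost layer of elements of $\Teps$ must then belong to the continuum region). Your version is heavier, presupposing the stress-representation and divergence-free machinery of \S\ref{sec:aux:helmh} and \S\ref{sec:2d}, but it makes transparent exactly why the constant $\Sigma_0$ cannot be pinned down when the atomistic region wraps around the torus, and it correctly observes that Lemma~\ref{th:2d:pth_impl_pteps} needs only patch test consistency. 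One cosmetic remark: using a single edge $\hat f$ for both directions requires placing it near a corner of $\Om$; taking a different starting edge for each $j$ is equally fine, since $\Xi$ is one fixed matrix.
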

\begin{proof}
  According to Lemmas~\ref{th:2d:Sac} and \ref{th:2d:cons_of_pt} we
  have
  \begin{displaymath}
    \b\< \del\Eac(\yF), u \b\> = \int_{\Omc} \pp W(\mF) : \D u \dx
    + \int_{\Oma \cup \Omi} \Sac(\yF) : \D u \dx.
 \end{displaymath}
 Let $\mG \in \R^{2 \times 2}$ and choose any $u \in \Us$ such that
 $\D u = \mG$ in $\Omi \cup \Oma$; this is possible due to the
 assumption that $\Oma \cup \Omi \subset {\rm int}(\Om)$. Integrating
 by parts twice, letting $\nu$ denote the unit outward normal to $\Omi
 \cup \Oma$, and noting that the portions of the surface integrals
 along $\pp \Om$ cancel each other out, yields
 \begin{align*}
   \b\< \del\Eac(\yF), u \b\> =~& \int_{\Omc} \pp W(\mF) : \D u \dx
   + \int_{\Oma \cup \Omi} \Sac(\yF) : \mG \dx \\ 
   =~& - \int_{\pp (\Oma \cup \Omi)} \pp W(\mF) : (u \otimes \nu) \ds
   + \int_{\Oma \cup \Omi} \Sac(\yF) : \mG \ds \\
   =~& - \int_{\Oma \cup \Omi} \pp W(\mF) : \D u \dx + \int_{\Oma \cup
     \Omi} \Sac(\yF) : \mG \ds \\
   =~& \int_{\Oma \cup \Omi} \b[ \Sac(\yF) - \pp W(\mF)\b] : \mG \dx.
 \end{align*}
 Since $\Eac$ is patch test consistent, the last term vanishes, and
 hence the result follows.
\end{proof}

It turned out to be difficult to devise a counterexample, which
clearly demonstrates that absence \eqref{eq:2d:cons_econs_disc} can
yield an inconsistent method. A more thorough investigation of this
condition is still required.

\paragraph{The locality condition}
\label{sec:2d:rem_locality}
\def\Ilp{\mathscr{L}^\i_{-, +}}
\def\Ilm{\mathscr{L}^\i_{-, -}}
\def\Irp{\mathscr{L}^\i_{+, +}}
\def\Irm{\mathscr{L}^\i_{+, -}}
To show that the locality condition \eqref{eq:intro:Fi_loc} (or a
variant thereof) is necessary we assume, without loss of generality,
that $N$ is even and define a functional $\mathscr{J} \in \CC^2(\Ys)$,
$\mathscr{J} = \eps^2 J$, 
\begin{displaymath}
  J(y) = 
  \bg|\sum_{x \in \Ilp} \!\!\Da{e_1} y(x) \bg|^2
  + \bg| \sum_{x \in \Irp} \!\!\Da{e_1} y(x) \bg|^2
  - \bg| \sum_{x \in \Ilm} \!\!\Da{e_1} y(x) \bg|^2
  - \bg| \sum_{x \in \Irm} \!\!\Da{e_1} y(x) \bg|^2,
\end{displaymath}
where, 
\begin{align*}
  \mathscr{L}^\i_{-, \pm} =~& \b\{ x \in \L \bsep x_1 \leq 0, x_2
  = \pm 1/2 \b\},  \quad \text{and} \\
  \mathscr{L}^\i_{+, \pm} =~& \b\{ x \in \L \bsep x_1 > 0, x_2 =
  \pm 1/2 \b\}.
\end{align*}

From the definition it is obvious that $J(\yF) = 0$ for all $\mF \in
\R^{2 \times 2}$. Moreover, using summation by parts along the two
lines $\Ilp \cup \Irp$ and $\Ilm \cup \Irm$, it is easy to check that
the patch test \eqref{eq:intro:patchtest} holds. Finally, $\mathscr{J}$
satisfies the scaling condition,
\begin{displaymath}
  \pp_{(x, x+\eps r)} \pp_{(x', x' + \eps r)} J(y) = \cases{2\, \mI,
    & \text{if } r = e_1 \text{ and } 
    x, x' \in \mathscr{L}^\i_{a, b},\quad a,b \in\{+,-\}, \\
    0, & \text{otherwise}.
  }
\end{displaymath}
However, $\mathscr{J}$ clearly violates the locality condition.

We may think of $\mathscr{J}$ as an a/c functional for $\Ea = 0$, or,
alternatively, as an additional contribution that can be added to any
a/c functional whose interface satisfies $\Ilp \cup \Ilm \cup \Irp
\cup \Irm \subset {\rm int}(\Omi^\per)$.

Let $y \in \YsA$ be ``smooth'' but not affine in the upper half plane
$\{ x \in \L \sep x_2 \geq 0 \}$, and let $y = \yA$ on the lower
interface $\Ilm \cup \Irm$; then, testing $\del \mathscr{J}(y)$ with
the unique displacement $u \in \Us$ such that
\begin{align*}
  u(x) = y(x) - \mA x &\qquad \text{for } x \in \{ (0, 1/2), (1, 1/2) \}; \\
   x_1 \mapsto u(x_1, 1/2) &\qquad \text{is affine in $[-1, 0]$ and in
     $[0, 1]$; and} \\
  \Da{e_2} u(x) = 0 &\qquad \text{for all } x \in \L;
\end{align*}
then we obtain, after a brief computation,
\begin{displaymath}
  \label{eq:2d:rem_loc:mainest}
  \b\| \del \mathscr{J}(y) \b\|_{\WW^{-1,2}_\eps} \geq \frac{\b\< \del \mathscr{J} (y), u
    \b\>}{\| \D u \|_{\LL^2(\Om)}} = \bg(\B|\eps\!\! \sum_{x \in \Ilp} \!\!\b(\Da{e_1}
  y(x) - \mA e_1\b) \B|^2
  + \B|\eps \!\!\sum_{x \in \Irp} \!\!\b(\Da{e_1}
  y(x) - \mA e_1\b) \B|^2\bg)^{1/2}.
\end{displaymath}
This final estimate is scaled like a surface integral and is clearly a
zeroth order term if $y$ is smooth but $y(0, 1/2) \neq y(1,
1/2)$. This shows that the locality condition \eqref{eq:intro:Fi_loc}
(or a variant thereof) is indeed necessary to obtain a first-order
consistency estimate.

\paragraph{The scaling condition}
\label{sec:2d:rem_scal}
\def\Ip{\mathscr{L}^\i_+}
\def\Im{\mathscr{L}^\i_-}
\def\Ipm{\mathscr{L}^\i_\pm}
It is fairly clear that the modelling error estimate can be
arbitrarily large without the scaling condition
\eqref{eq:intro:Fi_scal}. We nevertheless briefly discuss a simple
example with a natural interpretation.

Using a similar argument as in the previous paragraph, we define a
functional $\mathscr{J} \in \CC^2(\Ys)$, $\mathscr{J} = \eps^2 J$,
\begin{displaymath}
  J(y) = \beta \sum_{x \in \Ip} \b| \Da{e_1} y(x) \b|^2 - \beta
  \sum_{x \in \Im} \b|\Da{e_1} y(x) \b|^2,
\end{displaymath}
where $\beta > 0$ is a constant, and where
\begin{displaymath}
  \Ipm = \b\{ x \in \L \bsep x_2 = \pm 1/2 \b\}.
\end{displaymath}
It is easy to see that $J$ is patch test consistent, that $J(\yF)= 0$
for all $\mF \in \R^{2 \times 2}$, and that it satisfies the locality
condition.

Let $y \in \Ys$ such that $y(x) = \mA x$ for $x_2 = -1/2$, then
testing $\del \mathscr{J}(y)$ with the unique displacement $u \in
\Us$ such that
\begin{align*}
  u(x) = y(x) - \mA x, & \quad \text{for } x \in \Ip, \\
  \Da{e_2} u(x) = 0, & \quad \text{for all } x \in \L,
\end{align*}
we obtain
\begin{equation}
  \label{eq:2d:rem_scal:mainest}
  \b\| \del \mathscr{J}(y) \b\|_{\WW^{-1,2}_\eps} \geq \frac{\b\< \del \mathscr{J}(y), u
    \b\>}{\| \D u \|_{\LL^2}} = \beta \eps \bg[ \eps \sum_{x \in \Ip}
  \b| \Da{e_1} y(x) - \mA e_1 \b|^2 \bg]^{1/2}.
\end{equation}
If $y$ is ``smooth'' but not affine, then the term in square brackets
is of the order $O(1)$. By choosing $\beta$ arbitrarily large, the
modelling error can be made arbitrarily large as well. In particular, the
choice $\beta = 1/\eps$ would give a seemingly natural surface scaling
to the interface functional, and in this case we would obtain an
$O(1)$ modelling error.








\section{Conclusion}
\label{sec:conclusion}
A fairly complete consistency analysis of general patch test
consistent a/c coupling methods in (one and) two space dimensions was
developed in this paper. The main result is the first order modelling
error estimate, Theorem \ref{th:2d:main_result}. The main undesirable
condition is the assumption that ${\rm int}(\Oma)$ is connected. To
remove this assumption a finer analysis of the corrector functions
$\w(\mF, \cdot)$ defined in \S\ref{sec:2d:cons_pt} is required. At
this point one cannot exclude the possibility that a/c methods exist
for which this assumption is in fact necessary.

Many open problems remain to be answered. First and foremost, one
ought to answer the question whether a/c methods satisfying all the
conditions of Theorem \ref{th:2d:main_result} always exist. In
\cite{OrtZha:2011b}, we present a general construction (a variant on
the geometrically consistent coupling method \cite{E:2006}) that
appears to work in practise, however, we have no proof of this fact so
far. Indeed, if it should turn out that in certain cases the ``ghost
forces'' cannot be completely removed, then an extension of Theorem
\ref{th:2d:main_result} estimating the contribution of the ``ghost
force'' to the modelling error is highly desirable since such a result
would provide the correct quantity that needs to be minimized. It is
by no means clear that minimizing the ``ghost force'' itself is the
best possible target. A similar analysis would also be useful for
estimating the modelling error of blending methods \cite{BvK:blend1d}.

It should be conceptually straightforward, though technically more
demanding, to generalize all results to higher order finite element
methods in the continuum region, however, it would then also be
desirable to obtain the second order modelling error estimate in the
continuum region. Such a result seems difficult to obtain without a
more detailed understanding of the corrector functions $\w(\mF,
\cdot)$.

An immediate question is whether a variant of the main result is still
valid in 3D. This is by no means clear at this point. From a technical
point of view, we require generalizations of the two main technical
tools: the bond density lemma (\S\ref{sec:aux:bdl}) and the
characterisation of discrete divergence-free $\PO$-tensor fields
(\S\ref{sec:aux:helmh}). While the bond density lemma as stated in
this paper is false in 3D, one can establish variants that are
potentially usefull for a 3D analysis (work in progress). Generalising
the explicit construction of \S\ref{sec:aux:helmh} is  entirely open
at this point.

Another important and difficult question is the extension to
multi-lattices where the Cauchy--Born model is obtained through a
homogenization procedure \cite{AbLiSh, DoElLuTa:2007}.

Finally, it should be stressed, that Theorem \ref{th:2d:main_result}
is a general abstract result, and as such can undoubtedly be improved
upon when a specific coupling method is analyzed. It may be possible
for specific methods to obtain more information about the corrector
functions $\psi(\mF, \cdot)$, and hence obtain a better estimate on
the dependence of the modelling error on the interface width. For
example, the consistency analysis in \cite{OrtShap:2011a} requires no
corrector functions at all, and in the consistency analysis of
nearest-neighbour interactions \cite{OrtZha:2011b} the corrector
function vanishes in the continuum region. The proof of Theorem
\ref{th:2d:main_result} may, however, serve as a general guidance for
modelling error estimates in specific cases.

Finally, the stability of a/c methods in 2D/3D is largely open at this
point.

\section*{Acknowledgements}
I thank B.~Langwallner, X.~H.~Li, M.~Luskin, E.~S\"{u}li, A.~Shapeev,
and L.~Zhang for their comments on a draft of this manuscript, which
have greatly helped to improve its quality. E. S\"{u}li pointed out to
me the literature on the patch test. Early sketches of some of the
technical results presented in \S\ref{sec:fram} were developed in
discussion with A. Shapeev during our work on
\cite{OrtShap:2011a}. The representation of discrete divergence-free
$\PO$-tensor fields discussed in \S\ref{sec:aux:helmh} was pointed out
to me by L. Zhang. We have used a variant in our explicit construction
of consistent a/c methods in \cite{OrtZha:2011a}.


\appendix

\section{Proofs of \S \ref{sec:fram}}
\label{sec:app_proofs}

\begin{proof}[Proof of Lemma \ref{th:fram:Dbarvh_Dvh}]
  For $d = 1$, since $I_\eps y_h = y_h$, the result is trivial; hence
  assume that $d = 2$. Assume also that $p < \infty$. Since the norms
  involved are effectively weighted $\ell^p$-norms, one can obtain the
  case $p = \infty$ as the limit $p \nearrow \infty$.

  In this proof we will in fact use a periodic version of
  \eqref{eq:aux:bdl}, which is a simple consequence of
  \eqref{eq:aux:bdl} (see also \cite{OrtShap:2011a}):
  \begin{displaymath}
    |T| = \eps^2 \sum_{x \in \L} \mint_{x}^{x + \eps r} 
    \chi_{T^\per} \db.
  \end{displaymath}

  With our definition of the $\LL^p$-norms for matrix-valued
  functions, we have
  \begin{displaymath}
    \b\| \D I_\eps y_h \b\|_{\LL^p(\Om)}^p 
    = \sum_{j = 1}^2 \int_\Om \b| \D I_\eps y_h e_j \b|_p^p \dx
    = \sum_{j = 1}^2 \int_\Om \b| \Dc{e_j} I_\eps y_h \b|_p^p \dx.
  \end{displaymath}

  Using the periodic bond density lemma, and the fact that
  $\{\chi_{T^\per} \sep T \in \Teps \}$ is a partition of unity for
  $\R^2$, we have
  \begin{align}
    \notag
    \int_\Om \b| \Dc{e_j} I_\eps y_h \b|_p^p \dx =~& \sum_{T \in \Teps}
    |T| \b| \Dc{e_j} I_\eps y_h(T) \b|_p^p \\
    \notag
    =~& \sum_{T \in \Teps}  \b| \Dc{e_j} I_\eps y_h(T) \b|_p^p
    \eps^2 \sum_{x \in \L} \mint_{x}^{x + \eps e_j} \chi_{T^\per} \db \\
    \notag
    =~& \eps^2 \sum_{x \in \L} \sum_{T \in \Teps}
    \mint_{x}^{x + \eps e_j}  \b| \Dc{e_j} I_\eps y_h \b|_p^p \chi_{T^\per} \db.
    \\
    \label{eq:aux:Dbarvh_Dvh:15}
    =~& \eps^2 \sum_{x \in \L} \mint_{x}^{x + \eps e_j} 
    \b| \Dc{e_j} I_\eps y_h \b|_p^p \db.
 \end{align}
  We have also used the fact that $\Dc{e_j} I_\eps y_h$ is
  continuous across edges that have direction $e_j$.

  Due to the specific choice of the triangulation $\Teps$ it follows
  that $\Dc{e_j} I_\eps y_h$ is constant along each bond $(x, x+\eps
  e_j)$, and hence
  \begin{displaymath}
    \mint_{x}^{x + \eps e_j} 
    \b| \Dc{e_j} I_\eps y_h \b|_p^p \db = \b| \Da{e_j} I_\eps y_h(x) \b|_p^p
    = \bg| \mint_{x}^{x+\eps e_j} \Dc{e_j} y_h \db \bg|_p^p
    \leq \mint_{x}^{x+\eps e_j} \b| \Dc{e_j} y_h \b|_p^p \db,
  \end{displaymath}
  where we employed Jensen's inequality in the last step.

  Inserting this estimate into \eqref{eq:aux:Dbarvh_Dvh:15}, and
  reversing the argument in \eqref{eq:aux:Dbarvh_Dvh:15}, we arrive at
  \begin{align*}
    \int_\Om \b| \Dc{e_j} I_\eps y_h \b|_p^p \dx 
    \leq~& \eps^2 \sum_{x \in \L} \mint_{x}^{x+\eps e_j} \b| \Dc{e_j}
    y_h \b|_p^p \db \\
    =~& \sum_{T \in \Th} \eps^2 \sum_{x \in \L} \mint_x^{x+\eps e_j}
    \b| \Dc{e_j} y_h \b|_p^p \chi_{T^\per} \db \\
    =~& \sum_{T \in \Th} |T| \,  \b| \Dc{e_j} y_h(T) \b|_p^p = \b\|
    \Dc{e_j} y_h \b\|_{\LL^p(\Om)}^p. \qedhere
  \end{align*}
\end{proof}

\begin{remark}
  \label{rem:app:cts_discr_norms}
  From the foregoing proof, it follows that
  \begin{displaymath}
    \b\| \D y \b\|_{\LL^p(\Om)} = \bg(\eps^2 \sum_{j = 1}^2 \sum_{x \in
      \L} \b| \Da{e_j} y(x) \b|_p^p \bg)^{1/p}
    \qquad \text{for } y \in \Ys. \qedhere
  \end{displaymath}
\end{remark}

\begin{proof}[Proof of Lemma \ref{th:fram:interp_err}]
  To simplify the notation, we define the scalar function $z = y_i$
  for some fixed $i$. Moreover, we prove the result only for $d = 2$;
  for $d = 1$ the result follows from the interpolation error
  estimates established in \cite{OrtnerSuli:2008a}.

  {\it Step 1. $\WW^{2,\infty}$-interpolant: } We first define a
  $\WW^{2,\infty}$-interpolant $\tilz$ of $z$, using the
  $\CC^1$-conforming Hsieh--Clough--Tocher (HCT) element
  \cite{Ciarlet:1978}; see Figure \ref{fig:cloughtocher}.

  \begin{figure}
    \includegraphics[width=3cm]{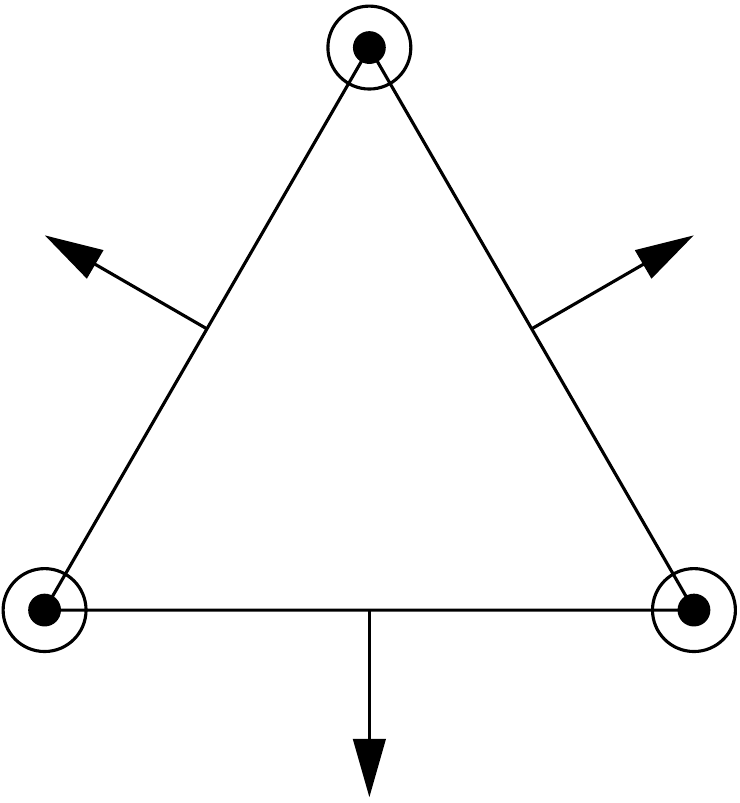}
    \caption{\label{fig:cloughtocher} Illustration of the degrees of
      freedom in the $\CC^1$-conforming Clough--Tocher element: black
      dots denote point values, circles denote gradient values, arrows
      denote directional derivatives.}
  \end{figure}

  Let $T \in \Tepsc$ and let $Q_T$ denote the set of vertices of $T$,
  and $F_T$ the set of edges of $T$.

  For each vertex $q \in Q_T$, we define the point value $\tilz(q) =
  z(q)$, and the gradient value by $\D\tilz(q) = \mint_{\omega_q^\c}
  \D z \dx$, where
  \begin{displaymath}
    \omega_q^\c = \bigcup \b\{ T' \in (\Tepsc)^\per \bsep q \in T' \b\}.
  \end{displaymath}
  Similarly, for each edge $f \in F_T$, $f = T \cap T'$, with midpoint
  $q_f$, we define the patch $\omega_f^\c = \Omc^\per \cap (T \cup
  T')$, and the directional derivative $\Dc{\nu}\tilz(q_f) =
  \mint_{\omega_f^\c} \Dc{\nu} z \dx$.

  Let $\varphi_q$ be the nodal basis function associated with the
  point value at a vertex $q$, $\phi_f$ the nodal basis function
  associated with the normal derivative at an edge $f$, and let
  $\Phi_{q,\alpha}$ be the nodal basis function associated with the
  $\alpha$-component of the derivative $\D z(q)$. 

  {\it Step 2. Estimating $z - \tilz$: } Fix $T \in \Tepsc$, $x \in
  T$, and define $\mF = \D z(T)$, then we have
  \begin{align*}
    \b| \D z(T) - \D \tilz(x) \b| =~&  \b| \mF - \D \tilz(x) \b| \\
    \leq~& \bg| \sum_{f \in F_T} \b( \mF\cdot\nu_f - \Dc{\nu_f} \tilz(q_f) \b)
    \otimes \D \varphi_f(x) \bg|
    + \bg| \sum_{\substack{q \in Q_T \\ \alpha \in \{1,2\}}}
      \b( \mF_\alpha - \pp_{x_\alpha} \tilz(q) \b) \otimes 
      \D \Phi_{q,\alpha} \bg|.
  \end{align*}

  Since all elements $T \in \Teps$ are translated, scaled, and
  possibly reflected, copies of the reference triangle $\hat{T} = {\rm
    conv}\{(0,0), (1,0), (0, 1) \}$, it follows that the HCT nodal basis
  functions are given (up to translations and reflections) by
  \begin{displaymath}
    \varphi_f(x) = \eps \hat{\varphi}_f\b( \eps^{-1} x \b),\quad
    \text{and} \quad
    \hat{\Phi}_{q, \alpha}(x) = \eps \Phi_{q, \alpha}\b( \eps^{-1} x \b).
  \end{displaymath}
  Note in particular, that the gradients of these nodal basis
  functions are scale invariant, that is, 
  \begin{displaymath}
    \| \D \varphi_f \|_{\LL^\infty} \leq C \quad \text{and} \quad
    \| \D \Phi_{q, \alpha} \|_{\LL^\infty} \leq C,
  \end{displaymath}
  where $C$ is a fixed constant that is independent of $\eps$.
  
  From the construction of $\tilz$ it is easy to see that, for $f \in
  F_q, q \in Q_T, \alpha \in \{1,2\}$,
  \begin{displaymath}
    \b| \mF \nu_f - \Dc{\nu_f} \tilz(q_f) \b| \leq \eps\,{\rm osc}(\D
    z; \omega_T^\c), \quad \text{and} \quad
    \b| \mF_\alpha - \pp_{x_\alpha} \tilz(q)
    \b|  \leq \eps\,{\rm osc}(\D
    z; \omega_T^\c);
  \end{displaymath}
  and hence we obtain
  \begin{equation}
    \label{eq:fram:interr:20}
    \b\| \D z(T) - \D \tilz \b\|_{\LL^p(T)} \leq C_1 \eps \, |T|^{1/p}\, {\rm
      osc}(\D z; \omega_T^\c),
  \end{equation}
  for some generic constant $C_1$.

  {\it Step 3. Interpolation error: } Using standard interpolation
  error estimates \cite{Ciarlet:1978}, we obtain
  \begin{displaymath}
    \b\| \D \tilz - \D I_h \tilz \b\|_{\LL^p(\Omc)}
    \leq C_I' \b\| h \D^2 \tilz \b\|_{\LL^p(\Omc)}.
  \end{displaymath}
  
  Let $T \in \Teps$ and $\mF = \D z(T)$, then application of an
  inverse inequality, and \eqref{eq:fram:interr:20} yield
  \begin{align}
    \b\| \D^2 \tilz \b\|_{\LL^p(T)} =~& \b\| \D^2 (\tilz - z)
    \b\|_{\LL^p(T)} 
    %
    \leq C_2 \eps^{-1} \b\| \D \tilz - \mF \b\|_{\LL^p(T)} 
    \label{eq:fram:interr:25}
    \leq  C_2 \, |T|^{1/p} \, {\rm osc}(\D z; \omega_T^\c).
  \end{align}

  Finally, since $\tilz(x) = z(x)$ for all $x \in \Lext$, it follows
  that $I_h z = I_h \tilz$, and hence we can estimate
  \begin{align*}
    \b\| \D z - \D I_h z \b\|_{\LL^p(\Om)} =~& \b\| \D z - \D I_h z
    \b\|_{\LL^p(\Omc)} \\
    \leq~& \b\| \D z - \D \tilz \b\|_{\LL^p(\Omc)} + \b\| \D \tilz -
    \D I_h \tilz \b\|_{\LL^p(\Omc)}.
  \end{align*}
  Employing \eqref{eq:fram:interr:20} and \eqref{eq:fram:interr:25},
  we obtain the stated result.
\end{proof}

\begin{proof}[Proof of Lemma \ref{th:fram:cons_reduceEmodel}]
  For each $f \in \Edg$, let $f = T_{-} \cap T_{+}$, $T_\pm \in \Teps$,
  let $\nu_\pm$ denote the corresponding unit outward normals, and
  $\omega_f' = T_+ \cup T_-$.

  We integrate by parts in each element $T \in \Teps$ and use the fact
  that $I_\eps u_h = u_h$ in $\Omi \cup \Oma$ to obtain
  \begin{align}
    \notag
    \B| \b\< \Phi, I_\eps u_h \b\> - \b\< \Phi_h, u_h \b\> \B|
    =~& \bg| \sum_{T \in \Teps} \int_T  \sigma(T) : \D \b(I_\eps u_h -
    u_h\b) \dx \bg| \\
    \notag
    =~& \bg| \sum_{\substack{f \in \Edg \\ f \not\subset \Omi\cup\Oma}}
    \int_f \b( \sigma(T_+) \nu_+ + \sigma(T_-) \nu_- \b) \cdot
    \b(I_\eps u_h - u_h \b) \ds \bg| \\
    \label{eq:fram:modelerrproof:1}
    \leq~& \sum_{\substack{f \in \Edg \\ f \not\subset \Omi\cup\Oma}}
    \eps {\rm osc}(\sigma; \omega_f')\,
    \int_f \b| I_\eps u_h - u_h \b| \ds.
  \end{align}

  Let $v := I_\eps u_h - u_h$. An application of \cite[Lemma
  6.6]{Ortner:Thesis} yields the trace inequality
  \begin{equation}
    \label{eq:fram:L1_trace}
    \| v \|_{\LL^1(f)} \leq \eps^{-1} \| v \|_{\LL^1(\omega_f')} +
    \| \D v \|_{\LL^1(\omega_f')}.
  \end{equation}
  Furthermore, since $v$ is Lipschitz continuous and $v(p) = 0$ on
  every vertex of the triangulation $\Teps$, we can use \cite[Lemma
  6.8]{Ortner:Thesis} to deduce that
  \begin{equation}
    \label{eq:fram:L1_pointFriedrich}
    \| v \|_{\LL^1(\omega_f')} \leq
    \sqrt{2} \eps \| \D v \|_{\LL^1(\omega_f')}.
  \end{equation}

  Combining \eqref{eq:fram:L1_pointFriedrich},
  \eqref{eq:fram:L1_trace}, and \eqref{eq:fram:modelerrproof:1},
  applying two H\"{o}lder inequalities, and estimating the overlaps
  between the patches $\omega_f'$, we deduce that
  \begin{align*}
    \B| \b\< \Phi, I_\eps u_h \b\> - \b\< \Phi_h, u_h \b\> \B| 
    \leq~& (1+\sqrt{2}) \eps \sum_{\substack{f \in \Edg \\ f \not\subset \Omi\cup\Oma}}
    {\rm osc}(\sigma; \omega_f') |\omega_f'|^{1/p} 
    \b\| \D v \b\|_{\LL^{p'}(\omega_f')} \\
    \leq~& C_1 \eps \bg(\sum_{T \in \Thc} |T| {\rm osc}(\sigma;
    \omega_T^\c)^p \bg)^{1/p} \b\| \D v \b\|_{\LL^{p'}(\Omc)}.
  \end{align*}
  An application of Lemma \ref{th:fram:Dbarvh_Dvh} yields the stated
  result.
\end{proof}

\section{List of Symbols}
\label{sec:notation}
\begin{longtable}{rcl}
  $a \cdot b$, $a \otimes b$ && vector dot product and tensor product;
  \S\ref{sec:intro:basic_notation}  \\
  $|\cdot|, |\cdot|_p$ && $\ell^p$-norms;
  \S\ref{sec:intro:basic_notation} \\
  $\| \cdot \|_{\ell^p_\eps}$ && weighted $\ell^p$-norms;
  \S\ref{sec:intro:basic_notation} \\
  $\L, \Lext$ && Lattice and lattice domain;
  \S\ref{sec:intro:at_model:perdef} \\
  $\yA$ && homogeneous deformation;
  \S\ref{sec:intro:at_model:perdef} \\
  $\Us, \Ys, \YsA$ && spaces of periodic displacements and deformations
  \S\ref{sec:intro:at_model:perdef} \\
  $A^\per, \mathscr{A}^\per$ && periodic extension of a set or family
  of sets
  \S\ref{sec:intro:at_model:perdef} \\
  $\Rg$ && interaction range, \S\ref{sec:intro:defn_Ea} \\
  $\Da{r}, \Da{\Rg}$ && finite difference operator and stencil 
  \S\ref{sec:intro:defn_Ea}  \\
  $\Dc{r}, \D$ && directional derivative, deformation or displacement
  gradient, 
  \S\ref{sec:intro:basic_notation} \\
  $\pp v$ && Jacobi matrix of vector valued function, 
  \S\ref{sec:intro:basic_notation} \\
  $\Ea$ && atomistic energy, \S\ref{sec:intro:defn_Ea} \\
  $V$ && atomistic interaction potential, \S\ref{sec:intro:defn_Ea} \\
  $\Pa$ && external potential in atomistic model,
  \S\ref{sec:intro:minA} \\
  $\del \E$, $\ddel \E$, $\< \cdot, \cdot \>$ && first and second
  variations, abstract duality pairing, 
  \S\ref{sec:intro:minA} \\
  $\pp_r V, \pp_{r,s} V$ && first and second partial derivatives of
  $V$, \S\ref{sec:intro:assV} \\
  $M_{r,s}^\a, M^\a$ && bounds on $\pp_{r,s} V$ and Lipschitz constant
  for $\del\Ea$, \S\ref{sec:intro:assV} \\ 
  $\Th, \Thext, \Edgh, \Edghext$ && triangulations, and edge sets, 
  \S\ref{sec:intro:galerkin} \\
  $h_T, h(x)$ && mesh size functions,
  \S\ref{sec:intro:galerkin} \\
  $\PI, \PO, \PIper, \POper$ && finite element spaces, 
  \S\ref{sec:intro:galerkin} \\
  $\Ush, \Ysh, \YshA$ && finite element spaces, 
  \S\ref{sec:intro:galerkin} \\
  $I_h$ && nodal interpolation operator for $\PI(\Th)$,
  \S\ref{sec:intro:galerkin} \\
  $W$ && Cauchy--Born stored energy function,
  \S\ref{sec:intro:qce} \\
  $\Eac$, $\Pac$ && a/c energy and external potential,
  \S\ref{sec:intro:gen_int_corr} \\
  $\Oma, \Omc, \Omi$ && atomistic, continuum, and interface region,
  \S\ref{sec:intro:gen_int_corr} \\
  $\Tha, \Thc, \Thi$ && atomistic, continuum, and interface
  triangulations,
  \S\ref{sec:intro:gen_int_corr} \\
  $\La$ && set of atomistic sites in a/c method, 
  \S\ref{sec:intro:gen_int_corr} \\
 $(x, x'), (x, x+\eps r)$ && bonds, 
  \S\ref{sec:intro:gen_int_corr} \\
  $\Bi$, $\Bi^\per$ && set of interface bonds,
  \S\ref{sec:intro:gen_int_corr} \\
  $\Ei$, $\Fi$ && interface functional, 
  \S\ref{sec:intro:gen_int_corr} \\
  $\pp_b \Fi, \pp_{(x,x+\eps r)} \Fi$ && scaled first partial derivatives of
  $\Fi$, 
  \S\ref{sec:intro:loc_scal} \\
  $M_{r,s}^\i$, $M^\i$ && bounds on second partial derivatives of $\Fi$,
  \S\ref{sec:intro:loc_scal} \\
  $\Teps, \Tepsext, \Edg, \Edgext$ && atomistic triangulation and edge
  sets,
  \S\ref{sec:fram:disc_cts} \\
  ${\rm osc}$ && oscillation operator, 
  \S\ref{sec:fram:disc_cts} \\
  $I_\eps$ && nodal interpolation operator for $\PI(\Teps)$,
  \S\ref{sec:fram:disc_cts} \\
  $\omega_T^\c$ && patch used in the interpolation error estimate,
  \S\ref{sec:fram:interp} \\
  ${\rm h}_T$ && modified mesh size function
  \S\ref{sec:fram:interp} \\
  $\|\cdot\|_{\WW^{-1,p}_h}$ && $\WW^{1,p'}$-dual norm on $\PI(\Th)^*$, 
  \S\ref{sec:fram:err} \\
  $\|\cdot\|_{\WW^{-1,p}_\eps}$ && $\WW^{1,p'}$-dual norm on $\PI(\Teps)^*$, 
  \S\ref{sec:fram:cons} \\
  $\phi_1, \phi_2$ && first and second neighbour potential,
  \S\ref{sec:e1d} \\
  $x_n, v_n, v_n', v_n'', v_n'''$ && notation for 1D grid functions,
  \S\ref{sec:e1d} \\
  $\chi_T$ && characteristic function used in bond density lemma,
  \S\ref{sec:aux:bdl} \\
  $\chi_T^\i$ && characteristic function used to define $\Sac$, Prop. \ref{th:2d:Sac}, p.\pageref{th:2d:Sac} \\
  $\mint_{x}^{x'} f \db, \mint_{x}^{x+\eps r} f \db$ && bond integrals, 
  \S\ref{sec:aux:bdl} \\
  $\CR$, $\CRper$ && Crouzeix--Raviart finite element spaces, 
  \S\ref{sec:aux:CR_space} \\
  $q_f$, $\zeta_f$, $\zeta_f^\per$ && midpoint of an edge $f$ and
  associated nodal basis, 
  \S\ref{sec:aux:CR_space} \\
  $\int_\gamma \sigma \cdot \dx$ && path integral,
  \S\ref{sec:aux:pathints}  \\
  $\mJ$ && rotation about $\pi/2$,
  Lemma~\ref{th:aux:divfree}, p.\pageref{th:aux:divfree} \\
  $\omega_T^\a, \omega_T$ && atomistic interaction neighbourhoods,
  \S\ref{sec:2d} \\
  $M_T$ && prefactors in modelling error estimate,
  Eq. \eqref{eq:2d:defn_barM}, p.\pageref{eq:2d:defn_barM} \\
  ${\rm width}(\Omi)$ && width of $\Omi$,
  Eq. \eqref{eq:2d:defn_widthOmi}, p.\pageref{eq:2d:defn_widthOmi} \\
  $\Sa$ && atomistic stress function, Eq. \eqref{eq:2d:defn_Sa}, p.\pageref{eq:2d:defn_Sa}  \\
  $V_{x,r}, V_{\mF, r}$ && alternative notation for $\pp_r V(\Da{\Rg}
  y(x))$ and for  $\pp_r V(\mF \Rg)$  \\
  $\Sac$ && a/c stress function, Prop. \ref{th:2d:Sac}, p.\pageref{th:2d:Sac} \\
  $\w(\mF; \cdot)$ && corrector function for $\Sac(\yF)$, Lemma
  \ref{th:2d:cons_of_pt}, p.\pageref{th:2d:cons_of_pt} \\
  $\modw(y; \cdot)$ && corrector function for $\Sac(y)$,
  Eq. \eqref{eq:2d:defn_modwh}, p.\pageref{eq:2d:defn_modwh} \\
  $\Sacm$ && modified a/c stress function,
  Eq. \eqref{eq:2d:defn_Sacm}, p. \pageref{eq:2d:defn_Sacm}
  \\
  $\Rac(y; T)$ && stress error, Eq. \eqref{eq:2d:defn:Rac}, p. \pageref{eq:2d:defn:Rac} \\
\end{longtable}

\bibliographystyle{plain}
\bibliography{qc}

\end{document}